\title{On unimodular finite tensor categories}
\author{Kenichi Shimizu}
\date{}
\numberwithin{equation}{section}
\newtheorem{counter}{}[section]
\theoremstyle{definition}
\newtheorem{definition}         [counter]{Definition}
\theoremstyle{plain}
\newtheorem{lemma}              [counter]{Lemma}
\newtheorem{theorem}            [counter]{Theorem}
\newtheorem{corollary}          [counter]{Corollary}
\newtheorem*{theorem*}           {Theorem}
\theoremstyle{remark}
\newtheorem{remark}             [counter]{Remark}
\newtheorem{example}            [counter]{Example}
\newcommand{\id}{\mathrm{id}}
\newcommand{\Hom}{\mathrm{Hom}}
\newcommand{\End}{\mathrm{End}}
\newcommand{\iHom}{\underline{\Hom}}
\newcommand{\iEnd}{\underline{\End}}
\newcommand{\iId}{\underline{\id}}
\newcommand{\ieval}{\underline{\mathrm{ev}}}
\newcommand{\icomp}{\underline{\circ}}
\newcommand{\NAT}{\text{\sc Nat}}
\newcommand{\END}{\text{\sc End}}
\newcommand{\REX}{\text{\sc Rex}}
\newcommand{\LEX}{\text{\sc Lex}}
\newcommand{\eval}{{\rm ev}}
\newcommand{\coev}{{\rm coev}}
\newcommand{\dinatto}{\xrightarrow{\ .. \ }}
\newcommand{\op}{\mathsf{op}}
\newcommand{\rev}{\mathsf{rev}}
\newcommand{\env}{\mathsf{env}}
\newcommand{\unitobj}{\mathbbm{1}}
\begin{document}

\begin{abstract}
  Let $\mathcal{C}$ be a finite tensor category with simple unit object, let $\mathcal{Z}(\mathcal{C})$ denote its monoidal center, and let $L$ and $R$ be a left adjoint and a right adjoint of the forgetful functor $U: \mathcal{Z}(\mathcal{C}) \to \mathcal{C}$. We show that the following conditions are equivalent: (1) $\mathcal{C}$ is unimodular, (2) $U$ is a Frobenius functor, (3) $L$ preserves the duality, (4) $R$ preserves the duality, (5) $L(\mathbbm{1})$ is self-dual, and (6) $R(\mathbbm{1})$ is self-dual, where $\mathbbm{1} \in \mathcal{C}$ is the unit object. We also give some other equivalent conditions. As an application, we give a categorical understanding of some topological invariants arising from finite-dimensional unimodular Hopf algebras.
\end{abstract}

\maketitle

\section{Introduction}

For a locally compact group $G$ with right Haar measure $\mu$, the modular function is defined as the unique function $\alpha: G \to \mathbb{R}_{+}$ such that $\mu(g E) = \alpha(g) \mu(E)$ for all Borel subsets $E$ of $G$. If the modular function is constantly one, then $G$ is said to be unimodular. We can define the modular function (usually called the distinguished grouplike element) and the unimodularity of a Hopf algebra by using the intregral theory for Hopf algebras instead of Haar measures. They are important not only in the Hopf algebra theory ({\em e.g.}, the Radford $S^4$-formula \cite{MR0407069}), but also in their applications to topology: For example, given a finite-dimensional unimodular ribbon Hopf algebra, one can construct an invariant of closed 3-manifolds \cite{MR1413901,MR1321293}. Recently, Ishii and Masuoka \cite{MR3265394} developed a method to construct an invariant of handlebody-links from finite-dimensional unimodular Hopf algebras.

A finite tensor category \cite{MR2119143} is a class of monoidal categories including the representation category of a finite-dimensional Hopf algebra. To generalize the Radford $S^4$-formula to finite tensor categories, Etingof, Nikshych and Ostrik \cite{MR2097289} introduced the {\em distinguished invertible object} $D \in \mathcal{C}$. This object is a categorical analogue of the modular function, and therefore $\mathcal{C}$ is said to be {\em unimodular} if $D \cong \unitobj$. In this paper, we investigate the object $D$ in detail and provide some characterizations of the unimodularity of a finite tensor category by using the monoidal center. As an application, we give a categorical understanding of the above-mentioned constructions of topological invariants by generalizing such constructions to unimodular finite tensor categories.

This paper is organized as follows: In Section~\ref{sec:preliminaries}, we recall from \cite{EGNO-Lect,MR2119143,MR1712872,MR1321145} some basic results on finite tensor categories and their module categories. We warn that, unlike \cite{MR2119143}, we do not assume that the unit object of a finite tensor category is simple (see \S\ref{subsec:FTC} for our definition).  In relation to this, we note that some additional technical assumptions on finite tensor categories will be made at the beginning of each of Sections~\ref{sec:main-thm}, \ref{sec:applications-1} and \ref{sec:applications-2}.

In Section~\ref{thm:central-Hopf-monad}, we first recall from \cite{MR1712872} the notions of ends and coends. Following \cite{MR2869176,MR2342829}, the (monoidal) center $\mathcal{Z}(\mathcal{C})$ of a rigid monoidal category $\mathcal{C}$ is isomorphic to the category of modules over a certain Hopf monad on $\mathcal{C}$, which we call the {\em central Hopf monad} on $\mathcal{C}$, provided that the coend
\begin{equation}
  \label{eq:intro-Hopf-monad-Z}
  Z(V) = \int^{X \in \mathcal{C}} X^* \otimes V \otimes X
\end{equation}
exists for all $V \in \mathcal{C}$. We also show that a coend of certain type of functors, including~\eqref{eq:intro-Hopf-monad-Z}, exists in a finite tensor category. As an application, we give an alternative proof of the fact that the center of a finite tensor category is again a finite tensor category \cite{MR2119143}.

Our main theorem is proved in Section~\ref{sec:main-thm}. There is an algebra $A \in \mathcal{C} \boxtimes \mathcal{C}^{\rev}$ which plays a crucial role in the definition of the distinguished invertible object of a finite tensor category $\mathcal{C}$. We express the algebra $A$ as a coend of a certain functor, and then show its relation with the central Hopf monad on $\mathcal{C}$. Based on this observation, we see that there are equivalences $K$ and $\widetilde{K}$ such that the diagram
\begin{equation*}
  \xymatrix{
    \mathcal{Z}(\mathcal{C})
    \ar[rr]^(.2){\widetilde{K}} \ar[d]_{U}
    & & \text{(the category of $A$-bimodules in $\mathcal{C} \boxtimes \mathcal{C}^{\rev}$)}
    \ar[d]^{F_A} \\
    \mathcal{C} \ar[rr]_(.2){K}
    & & \text{(the category of right $A$-modules in $\mathcal{C} \boxtimes \mathcal{C}^{\rev}$)}
  }
\end{equation*}
commutes, where $F_A$ is the functor forgetting the left $A$-module structure. Now let $L$ and $R$ be a left and a right adjoint functor of $U$. By using the above commutative diagram, we obtain a natural isomorphism
\begin{equation}
  \label{eq:intro-L-R-D}
  R(V) \cong L(D \otimes V) \quad (V \in \mathcal{C}),
\end{equation}
where $D \in \mathcal{C}$ is the distinguished invertible object of $\mathcal{C}$ (Lemma~\ref{lem:induction-L-R-1}). Once~\eqref{eq:intro-L-R-D} is obtained, the following our main theorem is proved without much difficulty:

\begin{theorem*}[Theorem~\ref{thm:main-theorem}]
  The following assertions are equivalent:
  \begin{enumerate}
  \item $\mathcal{C}$ is unimodular.
  \item $U$ is a Frobenius functor, {\it i.e.}, $L \cong R$.
  \item $L$ preserves the left duality, {\it i.e.}, there exists a natural isomorphism $L(V^*) \cong L(V)^*$ for $V \in \mathcal{C}$, where $(-)^*$ is the left duality on $\mathcal{C}$.
  \item $R$ preserves the left duality.
  \end{enumerate}
  If, moreover, the unit object $\unitobj \in \mathcal{C}$ is a simple object, then the above conditions are equivalent to each of the following conditions:
  \begin{enumerate}
    \setcounter{enumi}{4}
  \item $L(\unitobj) \cong L(\unitobj)^*$.
  \item $R(\unitobj) \cong R(\unitobj)^*$.
  \item $\Hom_{\mathcal{Z}(\mathcal{C})}(\unitobj, L(\unitobj)) \ne 0$.
  \item $\Hom_{\mathcal{Z}(\mathcal{C})}(R(\unitobj), \unitobj) \ne 0$.
  \end{enumerate}
\end{theorem*}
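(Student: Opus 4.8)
The plan is to derive the whole statement from the natural isomorphism $R(V)\cong L(D\otimes V)$ of Lemma~\ref{lem:induction-L-R-1}, together with one standard fact about the adjoints of a monoidal functor. Since $U$ is strong monoidal between rigid monoidal categories, it preserves left and right duals; feeding this into the adjunctions $L\dashv U\dashv R$ yields, by a short Hom-space computation, a natural isomorphism $R(V^{*})\cong L(V)^{*}$ (equivalently $R(V)\cong L({}^{*}V)^{*}$). So the first thing I would do is record this relation and the two elementary adjunction identities $\Hom_{\mathcal{Z}(\mathcal{C})}(L(X),M)\cong\Hom_{\mathcal{C}}(X,U(M))$ and $\Hom_{\mathcal{Z}(\mathcal{C})}(M,R(X))\cong\Hom_{\mathcal{C}}(U(M),X)$.

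For the equivalence of (1)--(4): (1)$\Rightarrow$(2) is immediate from Lemma~\ref{lem:induction-L-R-1}, and given the duality relation the equivalences (2)$\Leftrightarrow$(3) and (2)$\Leftrightarrow$(4) are purely formal --- e.g. if $L\cong R$ then $L(V^{*})\cong R(V^{*})\cong L(V)^{*}$, and conversely $L(V^{*})\cong L(V)^{*}\cong R(V^{*})$ forces $L\cong R$ after reparametrising. The one substantial step is (2)$\Rightarrow$(1). From $L\cong R\cong L(D\otimes-)$ I get a natural isomorphism $L(V)\cong L(D\otimes V)$; applying $\Hom_{\mathcal{Z}(\mathcal{C})}(-,M)$, then $L\dashv U$, then the internal-hom adjunction of $\mathcal{C}$, turns this into $\Hom_{\mathcal{C}}(V,U(M))\cong\Hom_{\mathcal{C}}(V,{}^{*}D\otimes U(M))$, natural in $V$; Yoneda gives $U(M)\cong{}^{*}D\otimes U(M)$ for every $M\in\mathcal{Z}(\mathcal{C})$, and specialising to $M=\unitobj$ forces ${}^{*}D\cong\unitobj$, hence $D\cong\unitobj$.

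For (5)--(8) I would use the simplicity of $\unitobj$ only through the fact that the invertible object $D$ is then simple too (so $\Hom_{\mathcal{C}}(D,\unitobj)\neq 0$ exactly when $D\cong\unitobj$) and that $\End_{\mathcal{C}}(\unitobj)\neq 0$ trivially. The key computation is $\Hom_{\mathcal{Z}(\mathcal{C})}(L(D),\unitobj)\cong\Hom_{\mathcal{C}}(D,\unitobj)$ from $L\dashv U$; and combining $R(\unitobj)\cong L(D)$ (Lemma~\ref{lem:induction-L-R-1}) with the duality relation gives $L(\unitobj)^{*}\cong R(\unitobj)\cong L(D)$. Then (3)$\Rightarrow$(5) and (4)$\Rightarrow$(6) are the specialisations at $V=\unitobj$; (5) reads $L(\unitobj)\cong L(D)$, so applying $\Hom_{\mathcal{Z}(\mathcal{C})}(-,\unitobj)$ and $L\dashv U$ sends the nonzero space $\End_{\mathcal{C}}(\unitobj)$ to $\Hom_{\mathcal{C}}(D,\unitobj)$, giving (5)$\Rightarrow$(1); (6) is treated identically (or dualised from (5)); (8) reads $\Hom_{\mathcal{Z}(\mathcal{C})}(L(D),\unitobj)\neq 0$ once $R(\unitobj)\cong L(D)$ is substituted, hence is equivalent to $D\cong\unitobj$; and (7) becomes (8) after dualising and using $L(\unitobj)^{*}\cong L(D)$, while (1)$\Rightarrow$(7) follows from $L\cong R$ making $\Hom_{\mathcal{Z}(\mathcal{C})}(\unitobj,L(\unitobj))\cong\End_{\mathcal{C}}(\unitobj)\neq 0$ via $U\dashv R$.

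I do not expect a genuine obstacle --- as the introduction indicates, once Lemma~\ref{lem:induction-L-R-1} is available the argument is short --- but two points need care. First, I must pin down the variances in the duality relation $R(V^{*})\cong L(V)^{*}$ so that no stray $(-)^{**}$ survives in the Yoneda step (it would in fact be harmless, since $(-)^{**}$ is an autoequivalence, but it is cleaner to avoid it). Second, in each Yoneda argument I should check that the chain of adjunction isomorphisms is natural in the variable being varied; this holds because every link in the chain is natural, but it is the step most easily glossed over.
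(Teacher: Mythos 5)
Your proposal is correct and takes essentially the same route as the paper: both arguments rest on Lemma~\ref{lem:induction-L-R-1} together with the duality relation $R\cong L^{!}$ from \S\ref{subsec:colax-lax-adj} (your $R(V^{*})\cong L(V)^{*}$), with an adjunction-plus-Yoneda argument for (2)$\Rightarrow$(1) and Schur's lemma applied to the simple invertible object $D$ for conditions (5)--(8). The only differences are cosmetic: you re-derive the Hom-isomorphisms of Corollary~\ref{cor:induction-L-R-2} inline and route some implications among (5)--(8) slightly differently (e.g.\ (5)$\Rightarrow$(1) directly via $L(\unitobj)^{*}\cong L(D)$ rather than through (7)).
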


We note that the equivalence between (1) and (2) has been obtained by Caenepeel, Militaru and Zhu in \cite[\S4, Theorem 53]{MR1926102} in the case where $\mathcal{C}$ is the category of representations of a finite-dimensional Hopf algebra.

In Section~\ref{sec:applications-1}, we apply our techniques to investigate further properties of the distinguished invertible object. Here we give a new proof of the Radford $S^4$-formula (\S\ref{subsec:radford-s4}), determine when $L$ and $R$ are faithful (Theorem~\ref{thm:induc-faith}), and introduce a formula expressing $D$ as an end of a certain functor (Lemma \ref{lem:end-formula}). As an application of the formula of $D$, we show that every semisimple finite tensor category ($=$ a multi-fusion category \cite{MR2183279}) is unimodular, as conjectured in \cite{2013arXiv1312.7188D}.

In Section~\ref{sec:applications-2}, we apply our results to study the role of the unimodularity in the constructions of some topological invariants (for this reason, in this section, we always assume $\End_{\mathcal{C}}(\unitobj) \cong k$). It is known that $B := R(\unitobj)$ is a commutative algebra in $\mathcal{Z}(\mathcal{C})$. The unimodularity can be characterized by this algebra:

\begin{theorem*}[Theorem~\ref{thm:com-Frob-ZC}]
  $B$ is Frobenius if and only if $\mathcal{C}$ is unimodular.
\end{theorem*}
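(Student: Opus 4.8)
The plan is to read the statement as a comparison of adjoint functors, using the isomorphism $R(V) \cong L(D \otimes V)$ of Lemma~\ref{lem:induction-L-R-1}. The key reduction is the assertion that \emph{$B$ is a Frobenius algebra if and only if $R$ is a Frobenius functor} (i.e.\ a left adjoint and a right adjoint of $R$ are isomorphic). To establish this I would first recall the (well-known) equivalence $\mathcal{C} \simeq {}_B\mathcal{Z}(\mathcal{C})$, the category of $B$-modules in $\mathcal{Z}(\mathcal{C})$: the functor $U$ is faithful, conservative and exact, hence monadic by Beck's theorem, and the projection formula $R(U(M)) \cong M \otimes B$ (which follows from $U$ being strong monoidal and $\mathcal{Z}(\mathcal{C})$ rigid) identifies the associated monad on $\mathcal{Z}(\mathcal{C})$ with $- \otimes B$. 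Under this equivalence $R$ corresponds to the forgetful functor ${}_B\mathcal{Z}(\mathcal{C}) \to \mathcal{Z}(\mathcal{C})$ and $U$ to the free-module functor $- \otimes B$. Since an algebra in a finite tensor category is Frobenius precisely when the forgetful functor from its modules has isomorphic left and right adjoints (equivalently $B \cong B^{*}$ as $B$-modules), the reduction follows, because being a Frobenius functor is a categorical property preserved under equivalences.

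Granting the reduction, the computation is short. A left adjoint of $R$ is $U$ itself, since $U \dashv R$. For a right adjoint, Lemma~\ref{lem:induction-L-R-1} gives $R \cong L \circ (D \otimes -)$; as $L \dashv U$ and the right adjoint of $D \otimes -$ is $D^{-1} \otimes -$ (the object $D$ being invertible), a right adjoint of $R$ is $W \mapsto D^{-1} \otimes U(W)$. Hence $R$ is a Frobenius functor if and only if there is a natural isomorphism $U(-) \cong D^{-1} \otimes U(-)$; evaluating at the unit object of $\mathcal{Z}(\mathcal{C})$ yields $\unitobj \cong D^{-1}$, so this occurs exactly when $D \cong \unitobj$, i.e.\ when $\mathcal{C}$ is unimodular (conversely, if $D \cong \unitobj$ the two functors are obviously isomorphic). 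Combining with the reduction proves Theorem~\ref{thm:com-Frob-ZC}.

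For the implication "$B$ Frobenius $\Rightarrow$ $\mathcal{C}$ unimodular" there is also a one-line argument bypassing the reduction: a Frobenius algebra in the finite tensor category $\mathcal{Z}(\mathcal{C})$ is self-dual as an object, so $R(\unitobj) \cong R(\unitobj)^{*}$, which is condition~(6) of Theorem~\ref{thm:main-theorem} (applicable since $\End_{\mathcal{C}}(\unitobj) \cong k$ forces $\unitobj$ to be simple). I would record this remark, but run the main proof through $R$, since it is uniform in both directions and does not use simplicity of $\unitobj$.

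The main obstacle is the bookkeeping behind the reduction: one must verify that the algebra structure on $B = R(\unitobj)$ coming from the lax monoidal structure of $R$ coincides with the one produced by the monad $- \otimes B$, and that under $\mathcal{C} \simeq {}_B\mathcal{Z}(\mathcal{C})$ the functor $U$ really becomes the \emph{untwisted} free-module functor. This is precisely where the commutative square relating $\mathcal{Z}(\mathcal{C})$, $\mathcal{C}$ and the categories of $A$-(bi)modules from the main construction is put to use. Once that identification is secured, the remaining ingredients — the adjoint calculus for $R$ via Lemma~\ref{lem:induction-L-R-1} and the Frobenius-algebra/Frobenius-functor dictionary — are routine.
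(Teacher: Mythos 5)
Your route is, in substance, the paper's own: transfer ``$B$ is Frobenius'' to ``the forgetful functor from $B$-modules in $\mathcal{Z}(\mathcal{C})$ is Frobenius'' (Lemma~\ref{lem:Frobenius-2}), identify that forgetful functor with $R$ through the equivalence $\mathcal{C} \simeq \mathcal{Z}(\mathcal{C})_B$ (part (2) of Theorem~\ref{thm:com-Frob-ZC}), and then compare the left adjoint $U$ of $R$ with the right adjoint $D^* \otimes U(-)$ supplied by Lemma~\ref{lem:induction-L-R-1} (this is exactly Corollary~\ref{cor:induction-L-R-2}); evaluating at $\unitobj$ gives $D \cong \unitobj$. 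Your side remark via condition (6) of Theorem~\ref{thm:main-theorem} is also fine.

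There is, however, one genuine flaw in the justification of the key equivalence $\mathcal{C} \simeq {}_B\mathcal{Z}(\mathcal{C})$. You argue: ``$U$ is faithful, conservative and exact, hence monadic by Beck's theorem.'' For the adjunction $U \dashv R$ the comparison functor goes from $\mathcal{C}$ to modules over the monad $RU \cong (-) \otimes B$ on $\mathcal{Z}(\mathcal{C})$, and Beck's conditions must be verified for the \emph{right} adjoint $R$, not for $U$: what is needed is that $R$ is exact and faithful (equivalently conservative). The listed properties of $U$ instead yield the other monadic description, $\mathcal{Z}(\mathcal{C}) \simeq {}_Z\mathcal{C}$ over the central Hopf monad, which is not the equivalence you use. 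Exactness of $R$ is Corollary~\ref{cor:induction-L-R-exact}, but faithfulness of $R$ is not automatic: by Theorem~\ref{thm:induc-faith} it fails whenever some off-diagonal component $\mathcal{C}_{ij}$ is nonzero, and then the comparison functor is not an equivalence. It holds here only because of the standing hypothesis $\End_{\mathcal{C}}(\unitobj) \cong k$ (Corollary~\ref{cor:induc-L-R-faith}); so the simplicity of the unit enters your argument at this step, not merely in the optional remark about condition (6). With the monadicity criterion applied to $R$ (or, as in the paper, with Theorem~\ref{thm:mod-cat-comparison} applied to $\iHom(\unitobj,-) = R$), the rest of your proposal — the projection formula identifying the monad with $(-)\otimes B$, the adjoint calculus for $R$, and the final evaluation at the unit — goes through and reproduces the paper's proof of part (4).
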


As a first application of this theorem, in \S\ref{subsec:hb-link-inv}, we give a categorical understanding of Ishii and Masuoka's construction of handlebody-link invariants \cite{MR3265394} by generalizing their construction in the setting of unimodular finite tensor categories. The second application concerns the object $\mathrm{Int}(\mathbf{F}_{\mathcal{C}})$ of integrals of a certain Hopf algebra $\mathbf{F}_{\mathcal{C}}$ in a braided finite tensor category $\mathcal{C}$, which is used to construct 3-manifold invariants in \cite{MR1862634,MR2251160}. We prove:

\begin{theorem*}[Theorem~\ref{thm:int-F-D}]
  $\mathrm{Int}(\mathbf{F}_{\mathcal{C}}) \cong D^*$.
\end{theorem*}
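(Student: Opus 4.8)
The plan is to rewrite the object of integrals in terms of the monoidal center and then evaluate it by a short application of the Yoneda lemma. Write $\mathbf{F} := \mathbf{F}_{\mathcal{C}}$, realized as the coend $\int^{X \in \mathcal{C}} X^{*} \otimes X$ with universal dinatural transformation $\iota_{X} \colon X^{*} \otimes X \to \mathbf{F}$ and carrying its Hopf algebra structure in $\mathcal{C}$ (comultiplication, counit and unit induced by the (co)evaluation morphisms; multiplication and antipode by the braiding). Recall from \cite{MR1862634,MR2251160} that $\mathrm{Int}(\mathbf{F})$ is the object of left integrals of $\mathbf{F}$, that is, the subobject of $\mathbf{F}$ representing the functor $T \mapsto \{\,\text{left integrals }T \to \mathbf{F}\,\}$, and that it is an invertible object of $\mathcal{C}$. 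Since $\mathcal{C}$ is braided, the central Hopf monad $Z$ of Section~\ref{thm:central-Hopf-monad} is isomorphic, as a Hopf monad, to $\mathbf{F} \otimes (-)$ (see \cite{MR2869176,MR2342829}); hence the canonical equivalence $\mathcal{Z}(\mathcal{C}) \simeq Z\text{-mod}$ becomes an equivalence $\mathcal{Z}(\mathcal{C}) \simeq {}_{\mathbf{F}}\mathcal{C}$ under which $U$ is the forgetful functor and $L$ is the free-module functor $V \mapsto {}_{\mathrm{reg}}(\mathbf{F} \otimes V)$. Let $\beta \colon \mathcal{C} \to \mathcal{Z}(\mathcal{C})$ denote the braided functor corresponding to $V \mapsto {}_{\varepsilon}V$, where ${}_{\varepsilon}V$ is $V$ with $\mathbf{F}$ acting through its counit; then $L(\unitobj) = {}_{\mathrm{reg}}\mathbf{F}$ and $\unitobj_{\mathcal{Z}(\mathcal{C})} = \beta(\unitobj)$. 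Unwinding definitions, a left integral $T \to \mathbf{F}$ is precisely a morphism $\beta(T) \to L(\unitobj)$ in $\mathcal{Z}(\mathcal{C})$; so $\mathrm{Int}(\mathbf{F})$, as the object representing $T \mapsto \Hom_{\mathcal{Z}(\mathcal{C})}(\beta(T), L(\unitobj))$, is
\begin{equation*}
  \mathrm{Int}(\mathbf{F}) \cong \beta^{R}(L(\unitobj)),
\end{equation*}
where $\beta^{R}$ is the right adjoint of $\beta$. As a sanity check, applying $\Hom_{\mathcal{C}}(\unitobj, -)$ recovers $\Hom_{\mathcal{Z}(\mathcal{C})}(\unitobj, L(\unitobj))$, which is condition~(7) of Theorem~\ref{thm:main-theorem}.

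To evaluate $\beta^{R}(L(\unitobj))$ I would use the projection formulas for the forgetful functor of the center, namely $M \otimes L(W) \cong L(UM \otimes W)$ and $R(V) \otimes M \cong R(V \otimes UM)$ for $M \in \mathcal{Z}(\mathcal{C})$ and $V, W \in \mathcal{C}$. From the second formula, rigidity and the adjunctions $L \dashv U \dashv R$ one gets, for all $M \in \mathcal{Z}(\mathcal{C})$,
\begin{equation*}
  \Hom_{\mathcal{Z}(\mathcal{C})}(R(V)^{*}, M) \cong \Hom_{\mathcal{Z}(\mathcal{C})}(\unitobj, R(V \otimes UM)) \cong \Hom_{\mathcal{C}}(\unitobj, V \otimes UM) \cong \Hom_{\mathcal{Z}(\mathcal{C})}(L({}^{*}V), M),
\end{equation*}
whence $R(V)^{*} \cong L({}^{*}V)$; combining this with $R(\unitobj) \cong L(D)$ (Lemma~\ref{lem:induction-L-R-1} at $V = \unitobj$) gives $L(\unitobj) \cong L(D)^{*}$. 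Therefore, for $T \in \mathcal{C}$,
\begin{equation*}
  \Hom_{\mathcal{Z}(\mathcal{C})}(\beta(T), L(\unitobj)) \cong \Hom_{\mathcal{Z}(\mathcal{C})}(\beta(T) \otimes L(D), \unitobj) \cong \Hom_{\mathcal{Z}(\mathcal{C})}(L(T \otimes D), \unitobj) \cong \Hom_{\mathcal{C}}(T \otimes D, \unitobj) \cong \Hom_{\mathcal{C}}(T, D^{*}),
\end{equation*}
using, in order: $L(\unitobj) \cong L(D)^{*}$ and rigidity of $\mathcal{Z}(\mathcal{C})$; the first projection formula together with $U\beta \cong \id$; the adjunction $L \dashv U$; and rigidity of $\mathcal{C}$. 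By the Yoneda lemma $\beta^{R}(L(\unitobj)) \cong D^{*}$, and hence $\mathrm{Int}(\mathbf{F}) \cong D^{*}$. In particular, $\mathrm{Int}(\mathbf{F}_{\mathcal{C}}) \cong \unitobj$ if and only if $\mathcal{C}$ is unimodular.

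The real work lies in two places. First, the conventions must be pinned down consistently: left versus right duals in the coend defining $\mathbf{F}$ and in the normalization of $\mathrm{Int}(\mathbf{F})$, which of the two braided functors $\mathcal{C} \to \mathcal{Z}(\mathcal{C})$ is the one identified with $V \mapsto {}_{\varepsilon}V$, and the left/right orientation of the integral condition; a mismatch at any of these points would yield $D$ instead of $D^{*}$, and $D \not\cong D^{*}$ in general (though $D^{*} \cong D^{-1}$ and $D^{**} \cong D$, since $D$ is invertible). Second, one must import the structural facts that are not part of this excerpt: the isomorphism of Hopf monads $Z \cong \mathbf{F} \otimes (-)$ together with the attendant identifications under $\mathcal{Z}(\mathcal{C}) \simeq {}_{\mathbf{F}}\mathcal{C}$, the description of the object of left integrals of a Hopf algebra in a braided finite tensor category, and the projection formulas for $U$. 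An alternative, which avoids the center entirely, is to argue with the dinatural transformation $\iota_{X}$ directly: a right cointegral corresponds to a dinatural family $\lambda_{X} \colon X^{*} \otimes X \to T$ subject to $(\lambda \otimes \id_{\mathbf{F}}) \circ \Delta_{\mathbf{F}} = (\id_{\mathbf{F}} \otimes u_{\mathbf{F}}) \circ \lambda$, and unwinding this equation against the universal property of the coend exhibits the twist by $D$ that dinaturality forces; this route is more hands-on but computationally heavier.
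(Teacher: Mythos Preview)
Your argument is correct and takes a genuinely different route from the paper's proof. The paper does not work with the braided embedding $\beta$ or with projection formulas at all. Instead it passes to the dual side: it identifies $A := U R(\unitobj) \cong {}^{*}\mathbf{F}_{\mathcal{C}}$ as algebras (via Lemma~\ref{lem:colax-lax-adj-2}), quotes from \cite{MR1759389,MR1862634,MR1685417} the equivalence $\mathrm{Int}(\mathbf{F}_{\mathcal{C}}) \cong D^{*} \iff A$ admits a $D$-valued trace, and then establishes the latter by invoking Theorem~\ref{thm:com-Frob-ZC}: the equivalence $K \colon \mathcal{C} \to \mathcal{Z}(\mathcal{C})_{B}$ sends $D^{*}$ to $({}_{B}B)^{*}$, and since $\mathcal{C}$ is braided one can lift $D^{*}$ to some $\widetilde{D} \in \mathcal{Z}(\mathcal{C})$, obtaining $({}_{B}B)^{*} \cong \widetilde{D} \otimes B_{B}$; applying $U$ yields $({}_{A}A)^{*} \cong D^{*} \otimes A_{A}$, which is the desired trace. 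So both proofs rest on the same core input, Lemma~\ref{lem:induction-L-R-1} (in your version the instance $R(\unitobj) \cong L(D)$, in the paper's version the right $B$-module identity $({}_{B}B)^{*} \cong R(D^{*})_{B}$), but the paper routes this through the module-category equivalence of Theorem~\ref{thm:com-Frob-ZC} and the trace reformulation, whereas you bypass both by a direct Yoneda computation in $\mathcal{Z}(\mathcal{C})$.

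What each approach buys: your argument is shorter and more transparent once one accepts the projection formulas and the identification of $\beta$ with the trivial $\mathbf{F}$-module structure, and it makes the representing-object interpretation of $\mathrm{Int}(\mathbf{F})$ explicit. The paper's argument, on the other hand, stays closer to the Frobenius/trace language developed in Section~\ref{sec:applications-2} and reuses Theorem~\ref{thm:com-Frob-ZC} rather than importing the Hopf-monad isomorphism $Z \cong \mathbf{F} \otimes (-)$; in particular it does not need to sort out which of the two braided embeddings $\mathcal{C} \to \mathcal{Z}(\mathcal{C})$ corresponds to the counit action, a point you rightly flag as delicate. One small slip: your derivation of $R(V)^{*} \cong L({}^{*}V)$ actually yields $R(V)^{*} \cong L(V^{*})$ (this is just the relation $R \cong L^{!}$ of \S\ref{subsec:colax-lax-adj}); it makes no difference at $V = \unitobj$, but you should correct it.
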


Finally, we suppose that $\mathcal{C}$ is a unimodular ribbon finite tensor category. Then the above theorem implies that a non-zero integral $\Lambda$ of $\mathbf{F}_{\mathcal{C}}$ is an algebraic Kirby element (in the sense of Virelizier \cite{MR2251160}). Hence $\Lambda$ gives rise to a closed 3-manifold invariant that generalizes the Hennings-Kauffman-Radford invariant constructed from a finite-dimensional unimodular ribbon Hopf algebra (Remark~\ref{rem:HKR-inv}).

\section*{Acknowledgments}

A part of this work is done during a visit of the author to Universit\'e de Bourgogne in November 2013. The author is grateful to Peter Schauenburg for his hospitality and for helpful discussion. The author also would like to thank Atsushi Ishii, Akira Masuoka, and Taiki Shibata for helpful discussion. The author is supported by Grant-in-Aid for JSPS Fellows (24$\cdot$3606). 

\section{Preliminaries}
\label{sec:preliminaries}

\subsection{Monoidal categories}
\label{subsec:moncat}

For the basic theory of monoidal categories, we refer the reader to \cite{MR1797619,MR1321145,MR1712872}. We first fix some conventions used throughout this paper. In view of Mac Lane's coherence theorem, we may, and do, assume that all monoidal categories are strict. Given a monoidal category $\mathcal{C} = (\mathcal{C}, \otimes, \unitobj)$ with tensor product $\otimes$ and unit object $\unitobj \in \mathcal{C}$, we set
\begin{equation*}
  \mathcal{C}^{\op} = (\mathcal{C}^{\op}, \otimes, \unitobj)
  \text{\quad and \quad}
  \mathcal{C}^{\rev} = (\mathcal{C}, \otimes^{\rev}, \unitobj),
\end{equation*}
where $\mathcal{M}^{\op}$ for a category $\mathcal{M}$ means the opposite category and $\otimes^{\rev}$ is the reversed tensor product given by $V \otimes^{\rev} W = W \otimes V$ for $V, W \in \mathcal{C}$.

Let $\mathcal{C}$ and $\mathcal{D}$ be monoidal categories. A {\em monoidal functor} from $\mathcal{C}$ to $\mathcal{D}$ is a functor $F: \mathcal{C} \to \mathcal{D}$ endowed with a morphism $F_0: \unitobj \to F(\unitobj)$ and a natural transformation
\begin{equation*}
  F_2(V, W): F(V) \otimes F(W) \to F(V \otimes W)
  \quad (V, W \in \mathcal{C})
\end{equation*}
satisfying certain axioms \cite[XI.2]{MR1712872}. If $F_0$ and $F_2$ are invertible, then $F$ is said to be {\em strong}. A {\em comonoidal functor} is a monoidal functor from $\mathcal{C}^{\op}$ to $\mathcal{D}^{\op}$.

Following \cite{MR1321145}, a {\em left dual object} of $V \in \mathcal{C}$ is an object $V^* \in \mathcal{C}$ endowed with morphisms $\eval_V: V^* \otimes V \to 1$ and  $\coev_V: \unitobj \to V \otimes V^*$ in $\mathcal{C}$ such that
\begin{gather*}
  (\coev_V \otimes \id_V) (\id_V \otimes \eval_V) = \id_V
  \quad \text{and} \quad
  (\eval_V \otimes \id_{V^*}) (\id_{V^*} \otimes \coev_V) = \id_{V^*}.
\end{gather*}
One can extend $V \mapsto V^*$ to a strong monoidal functor $(-)^*: \mathcal{C}^{\op} \to \mathcal{C}^{\rev}$, called the {\em left duality functor}, provided that every object of $\mathcal{C}$ has a left dual object. A {\em right dual object} ${}^* V$ of $V \in \mathcal{C}$ is a left dual object of $V$ in $\mathcal{C}^{\rev}$. Similarly to the above, one can extend $V \mapsto {}^* V$ to a strong monoidal functor ${}^* (-): \mathcal{C}^{\op} \to \mathcal{C}^{\rev}$ if every object of $\mathcal{C}$ has a right dual object.

A monoidal category $\mathcal{C}$ is said to be {\em rigid} (or {\em autonomous}) if every object of $\mathcal{C}$ has both a left and a right dual object. If this is the case, the contravariant endofunctors $(-)^*$ and ${}^* (-)$ on $\mathcal{C}$ are mutually quasi-inverse. Moreover,  by replacing $\mathcal{C}$ with an equivalent one, we can choose dual objects so that
\begin{equation*}
  \unitobj^* = \unitobj, \quad
  (V \otimes W)^* = W^* \otimes V^*
  \text{\quad and \quad}
  {}^* (V^*) = V = ({}^* V)^*
\end{equation*}
hold for all $V, W \in \mathcal{C}$ \cite{2013arXiv1309.4539S}. Thus, throughout this paper, we always assume that these equations hold.

\subsection{Monoidal center}
\label{subsec:mon-center}

Let $\mathcal{C}$ be a monoidal category. A {\em half-braiding} for $V \in \mathcal{C}$ is a natural isomorphism $\sigma_V: V \otimes (-) \to (-) \otimes V$ such that
\begin{equation*}
  \sigma_V(X \otimes Y) = (\id_X \otimes \sigma_V(Y)) \circ (\sigma_V(X) \otimes \id_Y)
\end{equation*}
holds for all $X, Y \in \mathcal{C}$. The {\em monoidal center} (or the {\em center} for short) of $\mathcal{C}$ is the category $\mathcal{Z}(\mathcal{C})$ whose objects are the pairs $(V, \sigma_V)$, where $V \in \mathcal{C}$ and $\sigma_V$ is a half-braiding for $V$, and whose morphisms are the morphisms in $\mathcal{C}$ compatible with the half-braidings. The category $\mathcal{Z}(\mathcal{C})$ has a natural structure of a braided monoidal category; see, {\it e.g.}, \cite[XIII.4]{MR1321145}.

\subsection{Algebras in a monoidal category}
\label{subsec:algebras}

An algebra ($=$ a monoid \cite{MR1712872}) in a monoidal category $\mathcal{C}$ is an object of $\mathcal{C}$ endowed with morphisms $m_A: A \otimes A \to A$ and $u_A: \unitobj \to A$ obeying the associative law and the unit law. The morphisms $m_A$ and $u_A$ are called the {\em multiplication} and the {\em unit} of $A$, respectively.

Given an algebra $A$ in $\mathcal{C}$, we denote by ${}_A \mathcal{C}$ and $\mathcal{C}_A$ the categories of left $A$-modules and right $A$-modules, respectively. If $M$ is a left $A$-module whose underlying object is left rigid, then its left dual object $M^*$ is a right $A$-module with action
\begin{equation*}
  M^* \otimes A
  \xrightarrow{\quad \rho_M^* \quad}
  (A \otimes M)^* \otimes A
  = M^* \otimes A^* \otimes A
  \xrightarrow{\quad \id_M^* \otimes \eval_A \quad} M^*,
\end{equation*}
where $\rho: A \otimes M \to M$ is the left action of $A$ on $M$. Similarly, a right dual object of a right $A$-module has a structure of a left $A$-module.

Now let $B$ be another algebra in $\mathcal{C}$. If $X \in {}_A \mathcal{C}$ and $Y \in \mathcal{C}_B$, then their tensor product $X \otimes Y$ is an $A$-$B$-bimodule. This construction gives rise to a bifunctor
\begin{equation*}
  {}_A \mathcal{C} \times \mathcal{C}_B \to {}_A \mathcal{C}_B,
  \quad (X, Y) \mapsto X \otimes Y
  \quad (X \in {}_A \mathcal{C}, Y \in \mathcal{C}_B),
\end{equation*}
where ${}_A \mathcal{C}_B$ denotes the category of $A$-$B$-bimodules. For simplicity, we now suppose that $\mathcal{C}$ is rigid. The following lemma is well-known:

\begin{lemma}
  \label{lem:adj-restriction}
  Let $F_A: {}_A \mathcal{C}_B \to \mathcal{C}_B$ and $F_B: {}_A \mathcal{C}_B \to {}_A \mathcal{C}$ be the functors forgetting the actions of $A$ and $B$, respectively. Then:
  \begin{enumerate}
  \item ${}_A A \otimes (-)$ is left adjoint to $F_A$
  \item ${}^*(A_A) \otimes (-)$ is right adjoint to $F_A$.
  \item $(-) \otimes B_B$ is left adjoint to $F_B$
  \item $(-) \otimes ({}_B B)^*$ is right adjoint to $F_B$.
  \end{enumerate}
  Here, given an algebra $A$ in $\mathcal{C}$, we denote by ${}_A A$ and $A_A$ the object $A$ viewed as a left $A$-module and a right $A$-module by the multiplication of $A$, respectively.
\end{lemma}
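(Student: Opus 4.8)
The plan is to exhibit explicit unit and counit natural transformations and check the triangle identities, which in this rigid setting are essentially forced once one writes down the obvious candidates. I will prove (1) and (2) in detail; parts (3) and (4) are completely analogous after replacing $\mathcal{C}$ by $\mathcal{C}^{\rev}$ (or, what amounts to the same, by passing from left modules to right modules), and I will simply remark that the proof goes through mutatis mutandis.

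For part (1): given $M \in \mathcal{C}_B$ and $N \in {}_A\mathcal{C}_B$, I want a natural bijection $\Hom_{{}_A\mathcal{C}_B}({}_AA \otimes M, N) \cong \Hom_{\mathcal{C}_B}(M, F_A N)$. First I check that ${}_AA \otimes M$, where ${}_AA$ carries its regular left $A$-action and $M$ its right $B$-action, is indeed an $A$-$B$-bimodule and that ${}_AA \otimes (-)$ is a functor ${}_A\mathcal{C}_B \to {}_A\mathcal{C}_B$ wait --- more precisely $\mathcal{C}_B \to {}_A\mathcal{C}_B$. The unit of the adjunction is $\eta_M \colon M \to F_A(A \otimes M) = A \otimes M$ given by $u_A \otimes \id_M$, and the counit $\varepsilon_N \colon A \otimes F_A N \to N$ wait --- $A \otimes N \to N$ is the left action $\rho_N$ of $A$ on $N$, which is a morphism of $A$-$B$-bimodules because the $A$-action is associative and commutes with the $B$-action. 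Naturality of both is immediate. The first triangle identity, $F_A(\varepsilon_N) \circ \eta_{F_A N} = \id$, is the unit law $\rho_N \circ (u_A \otimes \id) = \id$ for the module $N$; the second, $\varepsilon_{A \otimes M} \circ (A \otimes \eta_M) = \id$, is the unit law $m_A \otimes \id_M$ composed with $\id_A \otimes u_A \otimes \id_M$, i.e. again the unit law for $A$ tensored with $M$. This establishes (1).

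For part (2): I claim ${}^*(A_A) \otimes (-)$ is right adjoint to $F_A$, where $A_A$ is $A$ with its regular \emph{right} action, whose right dual ${}^*(A_A)$ inherits a left $A$-module structure by the recipe recalled just before the lemma (the left-module analogue of the displayed construction). The natural bijection I aim for is $\Hom_{\mathcal{C}_B}(F_A N, M) \cong \Hom_{{}_A\mathcal{C}_B}(N, {}^*(A_A) \otimes M)$ for $N \in {}_A\mathcal{C}_B$, $M \in \mathcal{C}_B$, and here the key move is the tensor--hom style manipulation supplied by rigidity: a morphism $N \to {}^*A \otimes M$ corresponds, via $\coev$ and $\eval$ for the right dual, to a morphism $A \otimes N \to M$ wait --- to a morphism $N \otimes wait$. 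Let me be careful: using ${}^*(-)$ one has $\Hom(N, {}^*A \otimes M) \cong \Hom(A \otimes N, M)$ naturally in all variables, by the standard adjunction $({}^*A) \otimes (-) \dashv A \otimes (-)$ coming from the right-dual (co)evaluation. I then observe that under this bijection the $A$-linearity condition on the left-hand morphism $N \to {}^*A \otimes M$ corresponds precisely to the condition that the adjunct $A \otimes N \to M$ factor through the action $\rho_N \colon A \otimes N \to N$, i.e. comes from an honest morphism $F_A N \to M$ in $\mathcal{C}_B$; this is a diagram chase using the definition of the $A$-action on ${}^*A$ and the associativity of $\rho_N$. The right $B$-linearity is automatic throughout since $B$ acts only on the $M$ and $N$ factors and commutes with everything in sight. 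Concretely, the unit is built from $\coev$ of the right dual together with $\rho_N$, and the counit $A \otimes {}^*A \otimes M \to wait$ --- $F_A({}^*A \otimes M) = {}^*A \otimes M$, and the counit $\Hom$-adjunct is $\eval$; one then verifies the two triangle identities, which reduce to the two defining zig-zag equations for the right dual ${}^*A$.

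The main obstacle, and the only place any real care is needed, is the bookkeeping in part (2): one must keep straight which copy of $A$ carries which action (the regular \emph{right} action $A_A$, so that its right dual acquires a \emph{left} action), and verify that the rigidity adjunction $({}^*A)\otimes(-) \dashv A\otimes(-)$ at the level of $\mathcal{C}$ refines to one at the level of bimodule categories --- equivalently, that a morphism is $A$-linear on one side of the hom-bijection if and only if its adjunct is ``$A$-equivariant'' in the appropriate sense on the other. Everything else (naturality, the $B$-linearity, and all of part (1)) is a routine unwinding of the unit and associativity axioms, and parts (3)--(4) follow by the symmetry $\mathcal{C} \leftrightarrow \mathcal{C}^{\rev}$ together with the observation that $F_B$ is to the $B$-action what $F_A$ is to the $A$-action.
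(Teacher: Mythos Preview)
The paper does not prove this lemma at all: it is introduced with ``The following lemma is well-known'' and left without argument. So there is no approach to compare against, and your task is simply to give a correct proof.

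Your sketch is essentially correct. Part (1) is exactly the free--forgetful adjunction for the monad $A\otimes(-)$ on $\mathcal{C}_B$, with unit $u_A\otimes\id$ and counit the action map; your verification of the triangle identities via the unit law is right. Parts (3) and (4) are, as you say, the $\mathcal{C}^{\rev}$-versions.

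For part (2), the strategy is sound but there is one slip and one place to tighten. The slip: you write ``the standard adjunction $({}^*A)\otimes(-)\dashv A\otimes(-)$,'' but the adjunction goes the other way, $A\otimes(-)\dashv {}^*A\otimes(-)$ (this is the second of the three adjunctions $V^*\otimes(-)\dashv V\otimes(-)\dashv {}^*V\otimes(-)$ recorded in \S\ref{subsec:FTC}); fortunately the Hom-bijection you actually use, $\Hom(N,{}^*A\otimes M)\cong\Hom(A\otimes N,M)$, is the correct one. The place to tighten: the claim that $A$-linearity of $f:N\to{}^*A\otimes M$ corresponds under this bijection to the adjunct $g:A\otimes N\to M$ factoring through $\rho_N$ is true, but deserves one explicit line. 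One clean way is to write the unit of the desired adjunction as
\[
N \xrightarrow{\ \coev'\otimes\id\ } {}^*A\otimes A\otimes N \xrightarrow{\ \id\otimes\rho_N\ } {}^*A\otimes N
\]
(where $\coev':\unitobj\to{}^*A\otimes A$ is the right-dual coevaluation) and the counit as ${}^*u_A\otimes\id_M:{}^*A\otimes M\to M$; then the two triangle identities reduce respectively to the zig-zag for ${}^*A$ combined with the unit axiom for $\rho_N$, and to the fact that the induced left $A$-action on ${}^*A$ satisfies its own unit axiom. With that made explicit, the proof is complete.
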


For an object $K$ and an algebra $A$ in $\mathcal{C}$, there is a bijection
\begin{equation*}
  \Theta: \Hom_A(A_A, K \otimes ({}_A A)^*)
  \xrightarrow[\text{ Lemma~\ref{lem:adj-restriction} }]{\cong} \Hom_{\mathcal{C}}(\unitobj, K \otimes A^*)
  \xrightarrow{\quad \cong \quad} \Hom_{\mathcal{C}}(A, K)
\end{equation*}
A morphism $\lambda: A \to K$ is called a {\em $K$-valued trace} if $\Theta^{-1}(\lambda)$ is an isomorphism of right $A$-modules. Now we suppose that $A$ has a $K$-valued trace $\lambda$.

\begin{lemma}
  \label{lem:Nakayama-iso}
  Let $A$, $K$ and $\lambda$ be as above, and set $\phi = (\Theta^{-1}(\lambda))^{-1}$. Then
  \begin{equation*}
    \nu: A \xrightarrow{\quad \phi^{-1} \quad} K \otimes A^*
    \xrightarrow{\quad \id_K \otimes \phi^* \quad} K \otimes A^{**} \otimes K^*
  \end{equation*}
  is an isomorphism of algebras in $\mathcal{C}$.
\end{lemma}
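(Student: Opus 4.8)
The plan is to verify directly, in the graphical calculus for rigid monoidal categories, that $\nu$ preserves multiplications and units; the key input is that $\phi$ and $\phi^{-1}$ are mutually inverse morphisms of right $A$-modules, which is the content of the hypothesis that $\lambda$ is a $K$-valued trace. Note at the outset that $\nu$ is an isomorphism of objects, being a composite of the isomorphisms $\phi^{-1}$ and $\id_K \otimes \phi^*$; so it suffices to show it preserves multiplication and unit. Write $T = K \otimes A^{**} \otimes K^*$ for the target. Under our standing conventions the functor $(-)^{**}$ is a strict monoidal functor, so $A^{**}$ is again an algebra, with multiplication $(m_A)^{**}$ and unit $(u_A)^{**}$, and $T$ carries the ``$K$-conjugate'' algebra structure whose multiplication $m_T$ contracts the inner pair $K^* \otimes K$ by $\eval_K$ and then multiplies the two copies of $A^{**}$ by $(m_A)^{**}$, and whose unit is $(\id_K \otimes (u_A)^{**} \otimes \id_{K^*}) \circ \coev_K$. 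Finally, put $q = \phi^{-1} \circ u_A \colon \unitobj \to K \otimes A^*$, the morphism corresponding to $\phi^{-1}$ under the free-module isomorphism $\Hom_A(A_A, -) \cong \Hom_{\mathcal{C}}(\unitobj, -)$; then $\phi \circ q = u_A$, the definition of $\Theta$ gives $\lambda = (\id_K \otimes \eval_A) \circ (q \otimes \id_A)$, and the right $A$-action on $K \otimes ({}_A A)^*$, whose underlying object is $K \otimes A^*$, is $\id_K \otimes r$ with $r = (\id_{A^*} \otimes \eval_A) \circ ((m_A)^* \otimes \id_A)$, as in the paragraph preceding Lemma~\ref{lem:adj-restriction}.

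\emph{Multiplicativity.} Since the inverse of a morphism of modules is again one, both $\phi^{-1}$ and $\phi$ are right $A$-linear: $\phi^{-1} \circ m_A = (\id_K \otimes r) \circ (\phi^{-1} \otimes \id_A)$ and $\phi \circ (\id_K \otimes r) = m_A \circ (\phi \otimes \id_A)$. Feeding the first identity into $\nu \circ m_A$ and using $\phi^{-1} \otimes \phi^{-1} = (\id_{K \otimes A^*} \otimes \phi^{-1}) \circ (\phi^{-1} \otimes \id_A)$, the desired equality $\nu \circ m_A = m_T \circ (\nu \otimes \nu)$ becomes, after cancelling the isomorphism $\phi^{-1} \otimes \id_A$ on the right, the identity
\[
  (\id_K \otimes \phi^*) \circ (\id_K \otimes r)
  = m_T \circ \bigl( (\id_K \otimes \phi^*) \otimes (\id_K \otimes \phi^*) \bigr) \circ (\id_{K \otimes A^*} \otimes \phi^{-1})
\]
of morphisms $(K \otimes A^*) \otimes A \to T$. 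I would prove this by expanding each $\phi^*$ as $(\eval_A \otimes \id) \circ (\id_{A^*} \otimes \phi \otimes \id) \circ (\id_{A^*} \otimes \coev_{K \otimes A^*})$, using the right $A$-linearity of $\phi$ to turn $\phi \circ (\id_K \otimes r)$ into $m_A \circ (\phi \otimes \id_A)$, sliding the $A$-action through the relevant evaluation, and collapsing the resulting zig-zags; both sides then reduce to the same diagram. The classical shadow of this identity is the Nakayama relation $\lambda(ab) = \lambda(b \, \nu(a))$ for Frobenius algebras, which is a convenient check.

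\emph{Unitality.} Here $\nu \circ u_A = (\id_K \otimes \phi^*) \circ q$, and I would identify this with $u_T$ by passing to partners under the natural bijection $\Hom_{\mathcal{C}}(\unitobj, K \otimes Y^*) \cong \Hom_{\mathcal{C}}(Y, K)$ with $Y = K \otimes A^*$. The defining property of $\phi^*$ identifies the partner of $(\id_K \otimes \phi^*) \circ q$ with $\lambda \circ \phi$, while a short computation identifies the partner of $u_T$ with $(\id_K \otimes \eval_A) \circ (\id_{K \otimes A^*} \otimes u_A)$. The remaining equality $\lambda \circ \phi = (\id_K \otimes \eval_A) \circ (\id_{K \otimes A^*} \otimes u_A)$ then follows by writing $\lambda \circ \phi = (\id_K \otimes \eval_A) \circ (q \otimes \id_A) \circ \phi$, observing that $(\id_K \otimes r) \circ (q \otimes \id_A) \circ \phi = \phi^{-1} \circ \phi = \id_{K \otimes A^*}$ (here $\phi^{-1} = (\id_K \otimes r) \circ (q \otimes \id_A)$, which follows from the right $A$-linearity of $\phi^{-1}$ and the unit law of $A$), and invoking the structural identity $\eval_A = \eval_A \circ (\id_{A^*} \otimes u_A) \circ r$, which is the dual of the unit law $m_A \circ (u_A \otimes \id_A) = \id_A$.

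Combining the two parts shows that $\nu$ is a morphism of algebras, hence, being invertible, an isomorphism of algebras. I expect the only genuinely laborious step to be the displayed identity in the multiplicativity part: the difficulty there is combinatorial rather than conceptual — keeping careful track of the double dual $A^{**}$ and of the conjugating legs $K$ and $K^*$ while repeatedly applying the triangle identities. One can also package the argument more abstractly, via the algebra isomorphism $\iEnd_A(A_A) \cong \iEnd_A\bigl( K \otimes ({}_A A)^* \bigr)$ induced by $\phi$ together with the identifications of the two internal endomorphism algebras with $A$ and with $T$; but establishing those identifications requires essentially the same calculation.
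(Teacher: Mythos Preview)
Your proposal is correct, and the multiplicativity argument is essentially the paper's: a direct string-diagram verification driven by the right $A$-linearity of $\phi$. The paper packages the same computation slightly differently, first recording the linearity of $\phi$ as an identity \eqref{eq:lem-Nak-iso-pf-1}, then transporting it through the adjunction $\Hom(K\otimes A^*\otimes A,A)\cong\Hom(K\otimes A^*,A\otimes A^*)$ to get a second identity, and finally applying $(-)^*$ to obtain a third; the graphical chase then quotes these three in turn. Your expansion of $\phi^*$ and repeated use of the zig-zags amounts to the same manipulation without isolating those intermediate equations.

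Where you do more work than necessary is the unit. The paper observes that once $m'\circ(\nu\otimes\nu)=\nu\circ m$ is established and $\nu$ is invertible, preservation of the unit is automatic: $\nu\circ u_A$ is then a two-sided unit for $m'$, and units are unique. Your explicit partner computation is correct (and the identity $\eval_A=\eval_A\circ(\id_{A^*}\otimes u_A)\circ r$ you invoke does hold, being the dual of the left unit law), but it can be replaced by that one-line remark.
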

\begin{proof}
  We denote the multiplication of $A$ and $A' := K \otimes A^{**} \otimes K^*$ by $m$ and $m'$, respectively. By definition, $m'$ is given by
  \begin{equation*}
    m' = (\id_K \otimes m^{**} \otimes \id_{K}^*) \circ (\id_K \otimes \id_A^{**} \otimes \eval_K \otimes \id_{A}^{**} \otimes \id_{K}^*).
  \end{equation*}
  Since $\phi$ is an isomorphism of right $A$-modules, we have
  \begin{equation}
    \label{eq:lem-Nak-iso-pf-1}
    (\phi \otimes \eval_A) \circ (\id_K \otimes m^* \otimes \id_A) = m \circ (\phi \otimes \id_A).
  \end{equation}
  Translating~\eqref{eq:lem-Nak-iso-pf-1} via $\Hom_{\mathcal{C}}(K \otimes A^* \otimes A, A) \cong \Hom_{\mathcal{C}}(K \otimes A^*, A \otimes A^*)$, we get
  \begin{equation}
    \label{eq:lem-Nak-iso-pf-2}
    (\phi \otimes \id_A^*) \circ (\id_K \otimes m^*) = (m \otimes \id_A^*) \circ (\phi \otimes \coev_A).
  \end{equation}
  Applying the left duality functor to~\eqref{eq:lem-Nak-iso-pf-2}, we get
  \begin{equation}
    \label{eq:lem-Nak-iso-pf-3}
    (m^{**} \otimes \id_K^*) \circ (\id_A^{**} \otimes \phi^*) = (\eval_{A^*} \otimes \phi^*) \circ (\id_A^{**} \otimes m^{**}).
  \end{equation}
  One can verify $m' \circ (\nu \otimes \nu) = \nu \circ m$ directly by using \eqref{eq:lem-Nak-iso-pf-1}--\eqref{eq:lem-Nak-iso-pf-3}. Figure~\ref{fig:Nakayama-iso} explains the details of the computation graphically (we read string diagrams from the top to the bottom and express the evaluation and the coevaluation by a cup and a cap, respectively). It is obvious that $\nu$ is invertible. Thus that $\nu$ preserves the unit follows from the uniqueness of the unit.
\end{proof}

\begin{figure}
  \begin{gather*}
    \begin{array}{c}
      \includegraphics{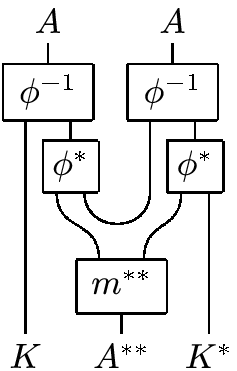}
    \end{array}
    \overset{\eqref{eq:lem-Nak-iso-pf-3}}{=}
    \begin{array}{c}
      \includegraphics{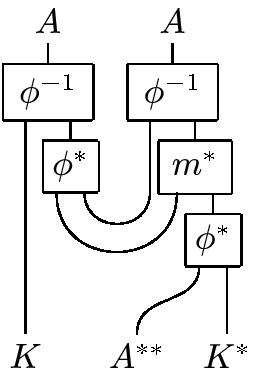}
    \end{array}
    =
    \begin{array}{c}
      \includegraphics{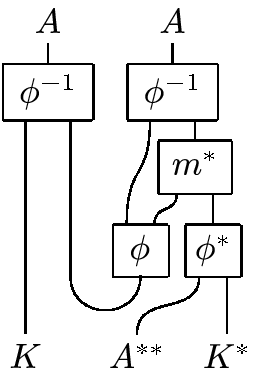}
    \end{array} \\
    \overset{\eqref{eq:lem-Nak-iso-pf-2}}{=}
    \begin{array}{c}
      \includegraphics{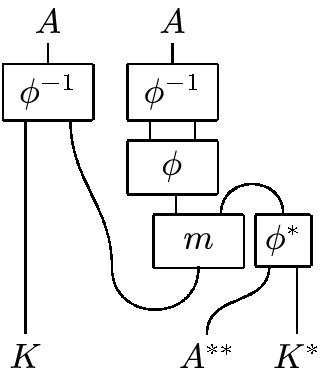}
    \end{array}
    =
    \begin{array}{c}
      \includegraphics{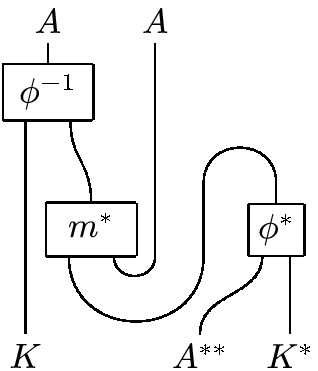}
    \end{array}
    \overset{\eqref{eq:lem-Nak-iso-pf-1}}{=}
    \begin{array}{c}
      \includegraphics{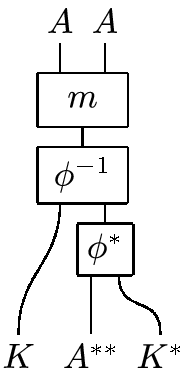}
    \end{array}
  \end{gather*}
  \caption{The proof of Lemma~\ref{lem:Nakayama-iso}}
  \label{fig:Nakayama-iso}
\end{figure}

A $\unitobj$-valued trace is simply called a {\em trace}. Recall that a functor $F$ is said to be {\em Frobenius} \cite{MR1926102} if it has a left adjoint functor which is also right adjoint to $F$. The following result is an immediate consequence of Lemma~\ref{lem:adj-restriction}.

\begin{lemma}
  \label{lem:Frobenius-2}
  For an algebra $A$ in $\mathcal{C}$, the following assertions are equivalent:
  \begin{enumerate}
  \item A trace for $A$ exists.
  \item $A {}_A \cong ({}_A A)^*$ as right $A$-modules.
  \item The forgetful functor $\mathcal{C}_A \to \mathcal{C}$ is a Frobenius functor.
  \end{enumerate}
\end{lemma}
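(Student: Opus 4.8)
The plan is to establish the cycle (1) $\Leftrightarrow$ (2) $\Leftrightarrow$ (3). The equivalence (1) $\Leftrightarrow$ (2) is a direct unwinding of the definition of a trace, and (2) $\Leftrightarrow$ (3) follows from the explicit adjoints supplied by Lemma~\ref{lem:adj-restriction} together with the uniqueness of adjoints.

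For (1) $\Leftrightarrow$ (2): specialize the bijection $\Theta$ to the case $K = \unitobj$, so that it becomes a bijection $\Theta\colon \Hom_{\mathcal{C}_A}(A_A, ({}_A A)^*) \xrightarrow{\ \cong\ } \Hom_{\mathcal{C}}(A, \unitobj)$. By definition a trace for $A$ is precisely a morphism $\lambda \in \Hom_{\mathcal{C}}(A, \unitobj)$ whose preimage $\Theta^{-1}(\lambda)$ is an isomorphism of right $A$-modules. Since $\Theta$ is a bijection, such a $\lambda$ exists if and only if $\Hom_{\mathcal{C}_A}(A_A, ({}_A A)^*)$ contains an isomorphism, that is, if and only if $A_A \cong ({}_A A)^*$ as right $A$-modules. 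Indeed, given (2), any isomorphism $\phi\colon A_A \to ({}_A A)^*$ yields the trace $\lambda := \Theta(\phi)$, since $\Theta^{-1}(\lambda) = \phi$; conversely (1) gives such a $\phi$ directly.

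For (2) $\Leftrightarrow$ (3): apply Lemma~\ref{lem:adj-restriction}(3)--(4) with the left-hand algebra taken to be the trivial algebra $\unitobj$, so that ${}_{\unitobj}\mathcal{C} = \mathcal{C}$, ${}_{\unitobj}\mathcal{C}_A = \mathcal{C}_A$, and the forgetful functor there becomes $F\colon \mathcal{C}_A \to \mathcal{C}$. This exhibits $(-) \otimes A_A$ as a left adjoint and $(-) \otimes ({}_A A)^*$ as a right adjoint of $F$. Now $F$ is a Frobenius functor precisely when it admits a single functor that is simultaneously a left and a right adjoint; by uniqueness of adjoints up to natural isomorphism, this holds if and only if $(-) \otimes A_A \cong (-) \otimes ({}_A A)^*$ as functors $\mathcal{C} \to \mathcal{C}_A$. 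Evaluating such an isomorphism at $\unitobj$ gives an isomorphism $A_A \cong ({}_A A)^*$ of right $A$-modules; conversely, tensoring an isomorphism $A_A \cong ({}_A A)^*$ on the left by $(-)$ produces a natural isomorphism of the two functors. Hence (2) $\Leftrightarrow$ (3), and the proof is complete.

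I do not expect any genuinely hard step here -- this is exactly why the statement is billed as an immediate consequence of Lemma~\ref{lem:adj-restriction}. The only points needing a little care are (a) reducing the bimodule forgetful functor of Lemma~\ref{lem:adj-restriction} to the one-sided forgetful functor $\mathcal{C}_A \to \mathcal{C}$ by inserting a trivial algebra on the opposite side, and (b) the elementary observation that, once $F$ is known to possess both a left and a right adjoint, being Frobenius is equivalent to those two (essentially unique) adjoints being naturally isomorphic.
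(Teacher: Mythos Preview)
Your proof is correct and follows exactly the route the paper has in mind: the paper offers no explicit argument beyond declaring the lemma ``an immediate consequence of Lemma~\ref{lem:adj-restriction},'' and your proposal simply spells out that consequence, unwinding the definition of a trace for (1)$\Leftrightarrow$(2) and comparing the two adjoints of the forgetful functor for (2)$\Leftrightarrow$(3). Nothing more is needed.
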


A {\em Frobenius algebra} in $\mathcal{C}$ is a pair $(A, \lambda)$ consisting of an algebra $A$ in $\mathcal{C}$ and a trace $\lambda: A \to \unitobj$. By abuse of terminology, we often say that an algebra $A$ is Frobenius if the equivalent conditions of Lemma~\ref{lem:Frobenius-2} are satisfied.

\subsection{Colax-lax adjunctions}
\label{subsec:colax-lax-adj}

The category $\mathbf{Set}$ of all sets is a monoidal category with respect to the Cartesian product and with unit object the set $\{ * \}$ consisting of one element. Now let $\mathcal{A}$, $\mathcal{B}$ and $\mathcal{C}$ be monoidal categories. If $P: \mathcal{A} \to \mathcal{C}$ is a comonoidal functor and $Q: \mathcal{B} \to \mathcal{C}$ is a monoidal functor, then
\begin{equation*}
  H: \mathcal{A}^{\op} \times \mathcal{B} \to \mathbf{Set},
  \quad (V, W) \mapsto \Hom_{\mathcal{C}}(P(V), Q(W))
  \quad (V \in \mathcal{A}, W \in \mathcal{B})
\end{equation*}
has a structure of a monoidal functor given by $H_0(*) = Q_0 \circ P_0$ and
\begin{gather*}
  H_2: H(V, W) \times H(X, Y) 
  \to H(V \otimes X, W \otimes Y), \\
  (f, g) \mapsto Q_2(W,Y) \circ (f \otimes g) \circ P_2(V,X).
\end{gather*}
Following Mac Lane \cite[IV]{MR1712872}, we write
\begin{equation}
  \label{eq:notation-adj}
  \langle F, G, \eta, \varepsilon \rangle: \mathcal{B} \rightharpoonup \mathcal{C}
\end{equation}
if $F: \mathcal{B} \to \mathcal{C}$ is a functor, $G$ is right adjoint to $F$, and $\eta$ and $\varepsilon$ are the unit and the counit of the adjunction, respectively. We say that \eqref{eq:notation-adj} is a {\em colax-lax adjunction} \cite[\S3.9.1]{MR2724388} if $F$ is comonoidal, $G$ is monoidal and the natural isomorphism
\begin{equation*}
  \Hom_{\mathcal{C}}(F(V), W) \cong \Hom_{\mathcal{C}}(V, G(W))
  \quad (V \in \mathcal{B}, W \in \mathcal{C})
\end{equation*}
of the adjunction is an isomorphism of monoidal functors from $\mathcal{B}^{\op} \times \mathcal{C}$ to $\mathbf{Set}$. This notion is in fact an instance of doctrinal adjunctions \cite{MR0360749} and therefore we have the following result (see \cite[\S3.9.1]{MR2724388} for details):

\begin{lemma}
  \label{lem:colax-lax-adj-1}
  Let $\langle F, G, \eta, \varepsilon \rangle: \mathcal{B} \rightharpoonup \mathcal{C}$ be an adjunction between monoidal categories $\mathcal{B}$ and $\mathcal{C}$. If $F$ is comonoidal (respectively, $G$ is monoidal), then there uniquely exists a monoidal structure of $G$ (respectively, a comonoidal structure of $F$) such that $\langle F, G, \eta, \varepsilon \rangle$ is a colax-lax adjunction.
\end{lemma}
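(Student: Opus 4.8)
The plan is to recognize this as an instance of Kelly's doctrinal adjunction theorem \cite{MR0360749} and to make the construction explicit by transporting the given structure maps along the adjunction (``taking mates''), after which the coherence axioms fall out of routine diagram chases using the triangle identities and naturality. Throughout, write $f \mapsto f^{\sharp}$ for the bijection $\Hom_{\mathcal{C}}(F(V), W) \cong \Hom_{\mathcal{B}}(V, G(W))$ of the adjunction, so that $f^{\sharp} = G(f) \circ \eta_V$ with inverse $g \mapsto \varepsilon_W \circ F(g)$.

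First I would treat the case that $F$ is comonoidal, with structure maps $F_0 \colon F(\unitobj) \to \unitobj$ and $F_2(V, W) \colon F(V \otimes W) \to F(V) \otimes F(W)$. I would set
\begin{equation*}
  G_0 = (F_0)^{\sharp} = G(F_0) \circ \eta_{\unitobj} \colon \unitobj \to G(\unitobj),
  \qquad
  G_2(X, Y) = \bigl( (\varepsilon_X \otimes \varepsilon_Y) \circ F_2(G(X), G(Y)) \bigr)^{\sharp}.
\end{equation*}
One checks that $G_2$ is natural in $X$ and $Y$, and that $(G, G_0, G_2)$ satisfies the associativity and unit axioms of a monoidal functor; each verification reduces to a manipulation using naturality of $\eta$ and $\varepsilon$, the triangle identities, and the comonoidal coherence axioms for $(F, F_0, F_2)$. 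The symmetric case, in which $G$ is monoidal and one must produce a comonoidal structure on $F$, is handled in exactly the same way with the roles of the adjoints reversed: set $F_0 = \varepsilon_{\unitobj} \circ F(G_0)$, and let $F_2(V, W)$ be the mate of $G_2(F(V), F(W)) \circ (\eta_V \otimes \eta_W) \colon V \otimes W \to G(F(V) \otimes F(W))$ under $\Hom_{\mathcal{B}}(V \otimes W, G(F(V) \otimes F(W))) \cong \Hom_{\mathcal{C}}(F(V \otimes W), F(V) \otimes F(W))$.

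Next I would verify that $\langle F, G, \eta, \varepsilon \rangle$ with these structures is a colax-lax adjunction, that is, that the adjunction isomorphism $\Phi_{V,W} \colon \Hom_{\mathcal{C}}(F(V), W) \to \Hom_{\mathcal{B}}(V, G(W))$ is monoidal as a natural transformation between the two hom-functors $\mathcal{B}^{\op} \times \mathcal{C} \to \mathbf{Set}$ built as in the discussion preceding the lemma (the source using the comonoidal structure of $F$, the target using the monoidal structure of $G$). Unwinding the definitions, this amounts to the identity $\Phi_{\unitobj,\unitobj}(F_0) = G_0$, which holds by definition of $G_0$, together with the identity $\Phi_{V \otimes X,\, W \otimes Y}\bigl( (f \otimes g) \circ F_2(V, X) \bigr) = G_2(W, Y) \circ \bigl( \Phi_{V,W}(f) \otimes \Phi_{X,Y}(g) \bigr)$ for all $f \colon F(V) \to W$ and $g \colon F(X) \to Y$; I would prove the latter by expanding both sides through $f^{\sharp} = G(f) \circ \eta_V$ and chasing the resulting diagram with the same ingredients as above.

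Finally, for uniqueness I would note that demanding $\Phi$ be monoidal already forces $G_0$ and $G_2$. Indeed, monoidality on the unit gives $G_0 = \Phi_{\unitobj,\unitobj}(F_0)$ directly; and taking $f = \varepsilon_X$, $g = \varepsilon_Y$ (so that $\Phi(\varepsilon_X) = G(\varepsilon_X) \circ \eta_{G(X)} = \mathrm{id}_{G(X)}$ by the triangle identity, and likewise for $Y$), monoidality yields exactly $G_2(X, Y) = \Phi\bigl( (\varepsilon_X \otimes \varepsilon_Y) \circ F_2(G(X), G(Y)) \bigr)$. The symmetric argument pins down the comonoidal structure on $F$ in the other case. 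The main obstacle is purely one of bookkeeping: the associativity and unit coherence diagrams for the induced structure and the monoidality of $\Phi$ are somewhat lengthy to write out in full, but they are entirely mechanical, and I would refer the reader to \cite[\S3.9.1]{MR2724388} and \cite{MR0360749} for the complete diagram chases.
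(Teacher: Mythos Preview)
Your proposal is correct and follows exactly the approach the paper has in mind: the paper does not give its own proof of this lemma but simply observes that it is an instance of Kelly's doctrinal adjunction \cite{MR0360749} and refers to \cite[\S3.9.1]{MR2724388} for details. Your construction of $G_0$ and $G_2$ as mates of $F_0$ and $(\varepsilon_X \otimes \varepsilon_Y) \circ F_2$, together with the uniqueness argument via $f = \varepsilon_X$, $g = \varepsilon_Y$, is precisely the content of those references spelled out.
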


An adjoint functor is often determined only up to isomorphism. Thus we consider the case where a functor $F: \mathcal{B} \to \mathcal{C}$ has two right adjoint functors $G$ and $G'$. Then there is a canonical isomorphism $G \cong G'$ induced from
\begin{equation}
  \label{eq:colax-lax-two-adj}
  \Hom_{\mathcal{C}}(V, G(W)) \cong \Hom_{\mathcal{B}}(F(V), W) \cong \Hom_{\mathcal{C}}(V, G'(W)).
\end{equation}
If $F$ is comonoidal, then both $G$ and $G'$ are monoidal by Lemma~\ref{lem:colax-lax-adj-1}. Since the isomorphisms in \eqref{eq:colax-lax-two-adj} are monoidal, the canonical isomorphism $G \cong G'$ is in fact an isomorphism of monoidal functors. Similarly, two left adjoint functors of a monoidal functor are canonically isomorphic as comonoidal functors.

Now suppose that $\mathcal{B}$ and $\mathcal{C}$ are rigid. For a functor $T: \mathcal{B} \to \mathcal{C}$, we define $T^!$ to be the following composition of functors:
\begin{equation*}
  T^!: \mathcal{B} \xrightarrow{\quad (-)^* \quad} \mathcal{B}^{\op}
  \xrightarrow{\quad T^{\op} \quad} \mathcal{C}^{\op}
  \xrightarrow{\quad {}^*(-) \quad} \mathcal{C}.
\end{equation*}
If $F: \mathcal{B} \to \mathcal{C}$ is strong monoidal, then there is an isomorphism $F^! \cong F$ of monoidal functors \cite[Lemma 1.1]{MR2381536}. If, moreover, $L$ is left adjoint to $F$, then $L^!$ is right adjoint to $F$ \cite[Lemma 3.5]{MR2869176}. Indeed, we have isomorphisms
\begin{equation}
  \label{eq:colax-lax-L!}
  \begin{aligned}
    \Hom_{\mathcal{C}}(V, L^!(W))
    & \cong \Hom_{\mathcal{C}}(L(W^*), V^*) \\
    & \cong \Hom_{\mathcal{C}}(W^*, F(V^*)) \\
    & \cong \Hom_{\mathcal{C}}(F^!(V), W)  \cong \Hom_{\mathcal{C}}(F(V), W)
  \end{aligned}
\end{equation}
natural in the variables $V \in \mathcal{C}$ and $W \in \mathcal{B}$. Similarly, if $R$ is right adjoint to $F$, then $R^!$ is left adjoint to $F$.

By Lemma~\ref{lem:colax-lax-adj-1}, $L$ is a comonoidal functor. Hence $L^!$ is a monoidal functor with monoidal structure given by ${}^*L_0: \unitobj \to L^!(\unitobj)$ and
\begin{equation*}
  L^!(X) \otimes L^!(Y) = {}^*(L(Y^*) \otimes L(X^*))
  \xrightarrow{\quad {}^*L_2(X,Y) \quad}
  {}^*L(Y^* \otimes X^*) = L^!(X \otimes Y),
\end{equation*}
where $L_0$ and $L_2$ are the comonoidal structure of $L$. On the other hand, since $L^!$ is right adjoint to $F$, it has another monoidal structure by Lemma~\ref{lem:colax-lax-adj-1}. The following lemma says that these two structures are the same.

\begin{lemma}
  \label{lem:colax-lax-adj-2}
  Let $F: \mathcal{B} \to \mathcal{C}$ a strong monoidal functor between rigid monoidal categories. Suppose that $F$ has a left adjoint $L$ and a right adjoint $R$. Then the canonical isomorphism $L^! \cong R$ is an isomorphism of monoidal functors.
\end{lemma}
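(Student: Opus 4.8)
The plan is to recall that the canonical isomorphism $L^! \cong R$ is, by its very construction, the composite of the chain of isomorphisms~\eqref{eq:colax-lax-L!} with the hom-set bijection $\Hom_{\mathcal{C}}(F(-), -) \cong \Hom_{\mathcal{C}}(-, R(-))$ of the $F$--$R$ adjunction. Since $F$ is strong, it is in particular comonoidal, so by Lemma~\ref{lem:colax-lax-adj-1} the $F$--$R$ adjunction is colax-lax once $R$ carries its (unique) monoidal structure; hence that second bijection is an isomorphism of monoidal functors. It will therefore be enough to prove that the chain~\eqref{eq:colax-lax-L!}, read with $L^!$ equipped with the duality structure ${}^*L_0$, ${}^*L_2$, is an isomorphism of monoidal functors. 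Equivalently, this asserts that the adjunction with right adjoint $L^!$ coming from~\eqref{eq:colax-lax-L!} is colax-lax when $L^!$ is given the duality structure; by the uniqueness clause of Lemma~\ref{lem:colax-lax-adj-1} it then also follows that the duality structure coincides with the doctrinal monoidal structure on $L^!$, which is the assertion of the sentence preceding the lemma.

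To prove that~\eqref{eq:colax-lax-L!} is monoidal I would decompose it into its four displayed constituent bijections and treat each separately, in the sense of~\S\ref{subsec:colax-lax-adj}. One of them is induced by the isomorphism $F^! \cong F$ of monoidal functors (\cite[Lemma~1.1]{MR2381536}): precomposing a hom-functor with a monoidal (equivalently, comonoidal) natural isomorphism in its source variable preserves monoidality. Another is the hom-set bijection of the colax-lax adjunction $\langle L, F, \eta, \varepsilon \rangle$ evaluated at dual objects, and its monoidality is precisely the colax-lax hypothesis on that adjunction transported along the duality functors, which are strong monoidal and therefore preserve the monoidality of $\mathbf{Set}$-valued functors. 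The remaining two are ``bending'' bijections of the shape $\Hom_{\mathcal{C}}(X, {}^*Y) \cong \Hom_{\mathcal{C}}(Y, X^*)$ and $\Hom_{\mathcal{C}}(X^*, Y) \cong \Hom_{\mathcal{C}}({}^*Y, X)$, assembled from the evaluations and coevaluations; their monoidality follows from the strong monoidality of $(-)^*$ and ${}^*(-)$ together with the strict normalizations $\unitobj^* = \unitobj$, $(V \otimes W)^* = W^* \otimes V^*$ and ${}^*(V^*) = V$ fixed in~\S\ref{subsec:moncat}.

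The one genuine difficulty is organizational. One must keep careful track of the variances introduced by passing to the categories $\mathcal{B}^{\op}$, $\mathcal{C}^{\op}$ and $\mathcal{C}^{\rev}$, and in each intermediate hom-functor, written in the form $(V, W) \mapsto \Hom_{\mathcal{C}}(P(V), Q(W))$, identify which slot carries the comonoidal functor $P$ and which the monoidal functor $Q$; then one checks that the structure maps $H_0$ and $H_2$ of~\S\ref{subsec:colax-lax-adj} are respected by each of the four factors. No idea is needed beyond those already used for Lemma~\ref{lem:colax-lax-adj-1}: each verification is a short diagram chase, and composing the four factors with the colax-lax $F$--$R$ adjunction from the first paragraph yields the claimed isomorphism $L^! \cong R$ of monoidal functors.
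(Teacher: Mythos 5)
Your proposal is correct and follows essentially the same route as the paper: the paper's proof likewise rests on the assertion that the chain \eqref{eq:colax-lax-L!} is an isomorphism of monoidal functors (with $L^!$ carrying the duality-induced structure), composes it with the monoidal hom-isomorphism of the colax-lax $F$--$R$ adjunction, and concludes via the Yoneda lemma. The only difference is that you expand the verification of that assertion into the four constituent bijections, a check the paper leaves implicit.
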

\begin{proof}
  The isomorphism $\Hom_{\mathcal{B}}(F(V), W) \cong \Hom_{\mathcal{C}}(V, L^!(W))$ obtained in the above is in fact an isomorphism of monoidal functors. Hence
  \begin{equation*}
    \Hom_{\mathcal{C}}(V, R(W)) \cong \Hom_{\mathcal{B}}(F(V), W) \cong \Hom_{\mathcal{C}}(V, L^!(W))
  \end{equation*}
  as monoidal functors from $\mathcal{C}^{\op} \times \mathcal{B}$ to $\mathbf{Set}$. Now the result follows from the Yoneda lemma.
\end{proof}

Applying Lemma~\ref{lem:colax-lax-adj-2} to the functor $F^{\rev}: \mathcal{B}^{\rev} \to \mathcal{C}^{\rev}$ induced by $F$, we also have an isomorphism $R \cong {}^! L$ of monoidal functors, where ${}^!L = L(^*-)^*$.

Since $R$ is monoidal, $A = R(\unitobj)$ is an algebra in $\mathcal{C}$ as the image of the trivial algebra $\unitobj \in \mathcal{C}$. Similarly, since $L$ is comonoidal, $C = L(\unitobj)$ is a coalgebra in $\mathcal{C}$. The above lemma implies that $A \cong {}^*C$ as algebras in $\mathcal{C}$.

\subsection{Hopf monads}
\label{subsec:Hopf-monads}

Let $T = (T, \mu, \eta)$ be a monad \cite[VI.1]{MR1712872} on a category $\mathcal{C}$ with multiplication $\mu$ and unit $\eta$. By a {\em $T$-module}, we mean an object $M \in \mathcal{C}$ endowed with a morphism $\rho_M: T(M) \to M$ satisfying
\begin{equation*}
  \rho_M \circ \mu_M = \rho_M \circ T(\rho_M)
  \text{\quad and \quad}
  \rho_M \circ \eta_M = \id_M.
\end{equation*}
This notion is also called a ``$T$-algebra'' in literature but we do not use this term in this paper. We denote by ${}_T \mathcal{C}$ the category of $T$-modules ($=$ the Eilenberg-Moore category of $T$-algebras \cite[VI.2]{MR1712872}).

Let $\mathcal{C}$ be a monoidal category. A {\em bimonad} \cite{MR2355605,MR2793022} on $\mathcal{C}$ is a monad $T = (T, \mu, \eta)$ on $\mathcal{C}$ such that the functor $T$ is comonoidal and the natural transformations $\mu$ and $\eta$ are comonoidal natural transformations. If $M$ and $N$ are $T$-modules, then their tensor product $M \otimes N$ is also a $T$-module by
\begin{equation*}
  T(M \otimes N) \xrightarrow{\quad T_2 \quad} T(M) \otimes T(N) \xrightarrow{\quad \rho_M \otimes \rho_N \quad} M \otimes N,
\end{equation*}
where $\rho_M$ and $\rho_N$ are the action of $T$ on $M$ and $N$, respectively. The category ${}_T \mathcal{C}$ of $T$-modules is a monoidal category with this operation.

Now suppose that $\mathcal{C}$ is rigid. Then a {\em Hopf monad} on $\mathcal{C}$ is a bimonad $T$ on $\mathcal{C}$ endowed with natural transformations
\begin{equation*}
  S_V: T(T(V)^*) \to V^*
  \quad \text{and} \quad
  \overline{S}_V: T({}^*T(V)) \to {}^*V \quad (V \in \mathcal{C})
\end{equation*}
satisfying certain conditions. The natural transformations $S$ and $\overline{S}$ are called the left antipode and the right antipode of $T$, respectively. If $T$ is a Hopf monad on $\mathcal{C}$, then the monoidal category ${}_T \mathcal{C}$ is rigid. The left dual object of $M \in {}_T \mathcal{C}$ is the left dual object $M^*$ in $\mathcal{C}$ with the action given by
\begin{equation}
  \label{eq:Hopf-monad-dual-module}
  T(M^*) \xrightarrow{\ T(\rho_M^*) \ } T(T(M)^*) \xrightarrow{\ S_M \ } M^*.
\end{equation}

\subsection{Finite abelian categories}

Let $k$ be a field. Given a $k$-algebra $A$, we denote by $A\mbox{-{\sf mod}}$ and $\mbox{{\sf mod}-}A$ the categories of finite-dimensional left and right $A$-modules, respectively. The following variant of the Eilenberg-Watts theorem \cite{MR0125148,MR0118757} will be used extensively:

\begin{lemma}
  \label{lem:EW-thm}
  Let $A$ and $B$ be finite-dimensional $k$-algebras. For a $k$-linear functor $F: \mbox{{\sf mod}-}A \to \mbox{{\sf mod}-}B$, the following three assertions are equivalent:
  \begin{enumerate}
    \renewcommand{\labelenumi}{(\arabic{enumi})}
  \item $F$ is left exact.
  \item $F$ has a left adjoint.
  \item $F \cong \Hom_A(M, -)$ for some finite-dimensional $B$-$A$-bimodule $M$.
  \end{enumerate}
  The following three assertions are also equivalent:
  \begin{enumerate}
    \renewcommand{\labelenumi}{(\arabic{enumi})$'$}
  \item $F$ is right exact.
  \item $F$ has a right adjoint.
  \item $F \cong (-) \otimes_A M$ for some finite-dimensional $A$-$B$-bimodule $M$.
  \end{enumerate}
\end{lemma}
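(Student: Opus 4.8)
The plan is to prove the Eilenberg--Watts type statement for functors between module categories of finite-dimensional algebras by reducing everything to the classical Eilenberg--Watts theorem and the characterization of which functors preserve colimits or limits. Concretely, I would first record that $\mbox{{\sf mod}-}A$ is an abelian category with enough projectives and enough injectives, that every object has finite length, and that a $k$-linear functor between such categories is left exact if and only if it preserves kernels (equivalently finite limits), and right exact if and only if it preserves cokernels (equivalently finite colimits). Since all categories in sight are essentially small and have all finite (co)limits, the special adjoint functor theorem in its elementary finite-dimensional form gives the implications $(2)\Rightarrow(1)$ and $(2)'\Rightarrow(1)'$ immediately: a functor with a left adjoint preserves all limits, in particular kernels, so it is left exact, and dually.

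For the implications $(3)\Rightarrow(2)$ and $(3)'\Rightarrow(2)'$ I would simply note the standard adjunctions: $\Hom_A(M,-)$ has the left adjoint $(-)\otimes_B M$ (tensor--hom adjunction), and $(-)\otimes_A M$ has the right adjoint $\Hom_B(M,-)$; both adjoints restrict to the finite-dimensional module categories because $M$ is finite-dimensional over $k$, so $\Hom_A(M,N)$ and $N\otimes_A M$ are finite-dimensional whenever $N$ is. The only genuine content is therefore $(1)\Rightarrow(3)$ and $(1)'\Rightarrow(3)'$, and here I would follow the usual argument. Given a right exact $k$-linear $F\colon \mbox{{\sf mod}-}A\to\mbox{{\sf mod}-}B$, set $M=F(A)$, which is an $A$-$B$-bimodule via the left $A$-action coming from functoriality applied to right multiplications $A\to A$ and the ambient right $B$-action. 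There is a natural transformation $(-)\otimes_A M\to F$ which is an isomorphism on the free module $A$, hence (by additivity) on every finite free module $A^{n}$; for a general $N$ choose a finite presentation $A^{m}\to A^{n}\to N\to 0$ and use right exactness of both $F$ and $(-)\otimes_A M$ together with the five lemma to conclude the comparison map is an isomorphism at $N$. The left exact case is dual: either apply the right exact case to the opposite algebras and the functor obtained by composing with $k$-linear duality $\Hom_k(-,k)$, or argue directly with $M=F(A)$ built from injective copresentations; I would spell out the duality reduction since it is the cleanest and reuses what has just been proved.

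The main obstacle, such as it is, is bookkeeping: one must be careful that the bimodule structure on $M=F(A)$ is the correct one (the left $A$-action must commute with the right $B$-action, which follows because $B$ acts through natural transformations of $F$ while $A$ acts through $F$ of $A$-module maps, and these commute by naturality), and one must check that all the adjoint functors and comparison transformations land in the \emph{finite-dimensional} module categories rather than the full module categories, which is exactly where finite-dimensionality of $M$ over $k$ is used. None of this is deep, but it is the part where a careful write-up spends its words; the homological input is just the five lemma applied to finite presentations (resp. copresentations). I would therefore present the proof as: (i) the easy adjunction/exactness implications in one paragraph, (ii) the Eilenberg--Watts construction of $M$ and the five-lemma argument for $(1)'\Rightarrow(3)'$, and (iii) the $k$-duality trick to deduce $(1)\Rightarrow(3)$ from (ii).
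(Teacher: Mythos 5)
Your argument is correct in substance, and there is nothing in the paper to compare it against in detail: the paper states Lemma~\ref{lem:EW-thm} without proof, as a known variant of the classical Eilenberg--Watts theorem (citing Eilenberg and Watts), so your write-up is exactly the standard proof that the citation stands in for. Two small slips to fix when you write it out. First, the left $A$-action on $M=F(A)$ comes from applying $F$ to the \emph{left} multiplications $\lambda_a\colon x\mapsto ax$, which are the endomorphisms of $A$ in $\mbox{{\sf mod}-}A$; right multiplications are not right $A$-linear, so as written your description of the bimodule structure is wrong, though the intended (and correct) structure is clear, and the commutation of the two actions is simply the fact that each $F(\lambda_a)$ is a morphism of right $B$-modules. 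Second, the implications $(2)\Rightarrow(1)$ and $(2)'\Rightarrow(1)'$ have nothing to do with any adjoint functor theorem: they are just the statement that a right adjoint preserves limits (hence kernels) and a left adjoint preserves colimits (hence cokernels). The rest is fine: the tensor--hom adjunctions give $(3)\Rightarrow(2)$ and $(3)'\Rightarrow(2)'$, with finite-dimensionality of $M$ over $k$ ensuring the adjoints stay inside the finite-dimensional module categories; the finite-presentation/five-lemma argument gives $(1)'\Rightarrow(3)'$ (every object of $\mbox{{\sf mod}-}A$ has a finite free presentation since $A$ is finite-dimensional); and the reduction of $(1)\Rightarrow(3)$ to the primed case via $\Hom_k(-,k)$ works, provided you record the routine natural isomorphism $\Hom_k(\Hom_k(N,k)\otimes_{A^{\op}}M',k)\cong\Hom_A(M',N)$ of right $B$-modules identifying the resulting functor with $\Hom_A(M',-)$ for the $B$-$A$-bimodule $M'$.
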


By a {\em finite abelian category} over $k$, we mean a $k$-linear abelian category equivalent to $\mbox{{\sf mod}-}A$ for some finite-dimensional $k$-algebra $A$.

\begin{lemma}
  \label{lem:finite-T-mod}
  Let $\mathcal{A}$ be a finite abelian category over $k$, and let $T$ be a $k$-linear right exact monad on $\mathcal{A}$. Then ${}_T \mathcal{A}$ is also a finite abelian category over $k$.
\end{lemma}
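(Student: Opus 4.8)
The plan is to exhibit an explicit finite-dimensional $k$-algebra whose module category is equivalent to ${}_T \mathcal{A}$. Since $\mathcal{A}$ is a finite abelian category, we may assume $\mathcal{A} = \mbox{{\sf mod}-}A$ for a finite-dimensional $k$-algebra $A$; fix a progenerator, say $A$ itself viewed as a right $A$-module, so that $\End_{\mathcal{A}}(A) \cong A^{\op}$. The key observation is that a right exact $k$-linear monad $T = (T, \mu, \eta)$ on $\mathcal{A}$ has a concrete description: by Lemma~\ref{lem:EW-thm}, the endofunctor $T$ is isomorphic to $(-) \otimes_A P$ for a finite-dimensional $A$-$A$-bimodule $P$, and the monad structure $\mu, \eta$ transports to a bimodule map $P \otimes_A P \to P$ together with $A \to P$ making $P$ into an algebra in the monoidal category ${}_A \mathcal{A}_A$ of $A$-$A$-bimodules (equivalently, an $A$-ring). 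Under this identification, a $T$-module is precisely a right $A$-module $M$ together with an action $M \otimes_A P \to M$ satisfying the module axioms, i.e.\ a right module over the ring $P$.

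Concretely, I would set $R := P$, regarded as a $k$-algebra via the composite multiplication $P \otimes_k P \to P \otimes_A P \to P$ and unit $k \to A \to P$; this $R$ is finite-dimensional over $k$ because $P$ is. The claim is then that ${}_T \mathcal{A} \simeq \mbox{{\sf mod}-}R$ as $k$-linear categories, which immediately gives that ${}_T \mathcal{A}$ is a finite abelian category over $k$. To prove the equivalence, I would construct the comparison functor sending a $T$-module $(M, \rho_M)$ to $M$ with its induced $R = P$-action, check it is fully faithful (a morphism of $T$-modules is exactly an $R$-linear map, since the $A$-action is recovered from the $R$-action through the unit $A \to R$), and check essential surjectivity by reversing the construction: an $R$-module restricts to an $A$-module via $A \to R$ and the $R$-action factors through $\otimes_A P$ by the relations defining $R$ from $P$.

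Alternatively — and perhaps more cleanly for the write-up — I would invoke the standard fact (e.g.\ from Mac Lane or from the theory of monads on module categories) that for a right exact monad $T$ on $\mbox{{\sf mod}-}A$, the Eilenberg-Moore category ${}_T \mathcal{A}$ is again of the form $\mbox{{\sf mod}-}R$ for the $k$-algebra $R = \End_{{}_T\mathcal{A}}(T(A))^{\op}$, together with the remark that $T(A)$ is a projective generator of ${}_T\mathcal{A}$: it is projective because $\Hom_{{}_T\mathcal{A}}(T(A), -) \cong \Hom_{\mathcal{A}}(A, U(-))$ with $U$ the (exact, faithful) forgetful functor, and it generates because $U$ reflects zero objects. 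Finite-dimensionality of $R$ follows since $T(A)$ is a finite-dimensional object of $\mathcal{A}$ and $\Hom$-spaces in a finite abelian category are finite-dimensional.

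The main obstacle is bookkeeping rather than anything deep: one must carefully verify that the monad multiplication and unit translate precisely into an associative unital $k$-algebra structure on $R = P$ (the subtlety being the interplay between $\otimes_k$ and $\otimes_A$), and that the Eilenberg-Moore category of $T$ matches the category of $R$-modules on the nose, including morphisms. Both routes are routine, but the second (via the projective generator $T(A)$ and the Morita-type recognition theorem for module categories) avoids explicit manipulation of the bimodule $P$ and is the one I would actually carry out in the paper.
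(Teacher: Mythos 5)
Your second route --- the one you say you would actually write up --- is essentially the paper's proof: the paper cites Eilenberg--Moore for the fact that ${}_T\mathcal{A}$ is abelian with the forgetful functor $F$ preserving and reflecting exactness, and then shows that $Q = L(P)$ (the free $T$-module on a projective generator $P$ of $\mathcal{A}$) is a projective generator, projectivity coming from $\Hom_T(L(P),-) \cong \Hom_{\mathcal{A}}(P,F(-))$ exactly as in your sketch. The only cosmetic difference is in the generation step: the paper shows directly that every $M$ is a quotient of $Q^{\oplus m}$ by lifting an epimorphism $P^{\oplus m} \to F(M)$ and composing with the counit $LU(M) \to M$, which is epic because $U$ is faithful, whereas you use the weaker statement that $\Hom_T(T(A),M) \cong U(M) \neq 0$ for $M \neq 0$; in the presence of projectivity, finite-dimensional Hom spaces and finite length (all inherited through the faithful exact $U$), the two are equivalent, so this is fine. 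Your first route, via Eilenberg--Watts ($T \cong (-)\otimes_A P$ for a bimodule $P$, the monad structure making $P$ an $A$-ring, and ${}_T\mathcal{A} \simeq \mbox{{\sf mod}-}P$) is a correct and more explicit alternative not taken in the paper; it buys a concrete description of the algebra at the cost of the bimodule bookkeeping you mention, while the paper's (and your preferred) argument stays purely categorical and reuses the projective-generator criterion for finiteness.
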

\begin{proof}
  Following Eilenberg and Moore \cite[Proposition 5.3]{MR0184984}, the category ${}_T \mathcal{A}$ is a $k$-linear abelian category such that the forgetful functor $F: {}_T \mathcal{A} \to \mathcal{A}$ preserves and reflects exact sequences. To complete the proof, it is enough to show that ${}_T \mathcal{A}$ has a projective generator. Let $L$ be a left adjoint of $F$, and let $P \in \mathcal{A}$ be a projective generator. Then $Q = L(P) \in {}_T \mathcal{A}$ is projective, since
  \begin{equation*}
    \Hom_{T}(L(P), -) \cong \Hom_{\mathcal{A}}(P, F(-)) = \Hom_{\mathcal{A}}(P, -) \circ F
  \end{equation*}
  is exact. Now let $M \in {}_T \mathcal{A}$. Then there exists an epimorphism $f: P^{\oplus m} \to F(M)$ in $\mathcal{A}$. Note that $L$ preserves epimorphisms as it is left adjoint. Since $U$ is faithful, the counit $\varepsilon$ of the adjunction is epic \cite[IV.3]{MR1712872}. Hence the composition
  \begin{equation*}
    \begin{CD}
      Q^{\oplus m} = L(P^{\oplus m}) @>{L(f)}>> L U(M) @>{\varepsilon}>> M
    \end{CD}
  \end{equation*}
  is epic. Therefore $Q \in {}_T \mathcal{A}$ is a projective generator.
\end{proof}

\subsection{Finite tensor categories}
\label{subsec:FTC}

Following \cite{MR2119143}, a {\em finite tensor category} over $k$ is a rigid monoidal category $\mathcal{C}$ such that $\mathcal{C}$ is a non-zero finite abelian category over $k$ and the tensor product $\otimes: \mathcal{C} \times \mathcal{C} \to \mathcal{C}$ is $k$-linear in each variable.

We note that the tensor product of a finite tensor category $\mathcal{C}$ is exact in each variable \cite[Proposition 2.18]{MR1797619}, since there are adjunctions
\begin{equation*}
  V^* \otimes (-) \ \dashv \  V \otimes (-) \ \dashv \  {}^* V \otimes (-)
  \text{\quad and \quad}
  (-) \otimes {}^*V \ \dashv \  (-) \otimes V \ \dashv \  (-) \otimes V^*
\end{equation*}
for each $V \in \mathcal{C}$, where $F \dashv G$ means that $F$ is a left adjoint functor of $G$.

Unlike \cite{MR2119143}, and like \cite{2014arXiv1406.4204D}, we do not assume that the unit object $\unitobj \in \mathcal{C}$ is a simple object (thus our finite tensor category is in fact a {\em finite multi-tensor category} in the sense of \cite{MR2119143}). It is known that $\unitobj \in \mathcal{C}$ can be written as the direct sum
\begin{equation}
  \label{eq:unit-decomp-1}
  \unitobj = \unitobj_1 \oplus \dotsb \oplus \unitobj_m
\end{equation}
of pairwise non-isomorphic simple objects $\unitobj_1, \dotsc, \unitobj_m \in \mathcal{C}$ such that
\begin{equation}
  \label{eq:unit-decomp-2}
  \unitobj_i \otimes \unitobj_i \cong \unitobj_i,
  \quad \unitobj_i \otimes \unitobj_j = 0 \quad (i \ne j),
  \quad \text{and} \quad \unitobj_i^* \cong \unitobj_i
\end{equation}
for all $i, j = 1, \dotsc, m$. In particular, $\unitobj$ is a semisimple object. Thus:

\begin{lemma}
  The unit object $\unitobj \in \mathcal{C}$ is a simple object if $\End_{\mathcal{C}}(\unitobj) \cong k$.
\end{lemma}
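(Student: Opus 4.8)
The plan is to read this off immediately from the decomposition \eqref{eq:unit-decomp-1}--\eqref{eq:unit-decomp-2} of the unit object, which has already been recorded. Since $\unitobj_1, \dotsc, \unitobj_m$ are pairwise non-isomorphic simple objects of the finite abelian category $\mathcal{C}$, Schur's lemma gives $\Hom_{\mathcal{C}}(\unitobj_i, \unitobj_j) = 0$ for $i \ne j$ and shows that $D_i := \End_{\mathcal{C}}(\unitobj_i)$ is a finite-dimensional division algebra over $k$ (finite-dimensionality being guaranteed by the assumption that $\mathcal{C}$ is a finite abelian category). Applying the additive functor $\End_{\mathcal{C}}(-)$ to \eqref{eq:unit-decomp-1} then yields a $k$-algebra isomorphism $\End_{\mathcal{C}}(\unitobj) \cong D_1 \times \dotsb \times D_m$.

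Now I would invoke the hypothesis $\End_{\mathcal{C}}(\unitobj) \cong k$. Comparing $k$-dimensions in the above isomorphism gives $1 = \sum_{i=1}^{m} \dim_k D_i \ge m$, so $m = 1$ and $\dim_k D_1 = 1$, i.e.\ $\unitobj = \unitobj_1$ is simple (with $\End_{\mathcal{C}}(\unitobj) = D_1 = k$). Alternatively, one can argue more conceptually without the dimension count: \eqref{eq:unit-decomp-1}--\eqref{eq:unit-decomp-2} exhibit $\unitobj$ as a semisimple object, and a semisimple object is simple precisely when it is indecomposable; but an object whose endomorphism algebra is $k$ is indecomposable, since $k$ has no idempotents other than $0$ and $1$, and direct-sum decompositions of $\unitobj$ correspond to idempotents of $\End_{\mathcal{C}}(\unitobj)$.

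I do not anticipate any genuine obstacle; the only point that must not be glossed over is that the $\Hom$-spaces in play are finite-dimensional, which is exactly what being a finite abelian category over $k$ provides, and that the decomposition \eqref{eq:unit-decomp-1}--\eqref{eq:unit-decomp-2} is available, which it is by the cited results.
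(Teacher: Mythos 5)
Your proposal is correct and follows exactly the route the paper intends: the lemma is stated as an immediate consequence of the decomposition \eqref{eq:unit-decomp-1}--\eqref{eq:unit-decomp-2}, and your Schur's-lemma/dimension count (or equivalently the idempotent argument) is just the spelled-out version of that. No gaps.
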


The full subcategories $\mathcal{C}_{i j} := \unitobj_i \otimes \mathcal{C} \otimes \unitobj_j \subset \mathcal{C}$ are called the {\em component  subcategories} of $\mathcal{C}$. Now suppose that $X \in \mathcal{C}_{p q}$ and $Y \in \mathcal{C}_{r s}$. If $q \ne r$, then $X \otimes Y = 0$ by~\eqref{eq:unit-decomp-2}. Otherwise, we have an inequality
\begin{equation}
  \label{eq:len-ineq}
  \ell(X \otimes Y) \ge \ell(X) \cdot \ell(Y),
\end{equation}
where $\ell(M)$ denotes the length of $M$. In particular, $X \otimes Y \ne 0$ if $X \in \mathcal{C}_{p q}$ and $Y \in \mathcal{C}_{q r}$ are non-zero objects; see \cite{EGNO-Lect} for more details.

\subsection{Finite module categories}
\label{subsec:FMC}

Let $\mathcal{C}$ be a monoidal category. A {\em left $\mathcal{C}$-module category} is a category $\mathcal{M}$ endowed with a functor $\ogreaterthan: \mathcal{C} \times \mathcal{M} \to \mathcal{M}$, called the {\em action} of $\mathcal{C}$, and natural isomorphisms
\begin{equation*}
  \unitobj \ogreaterthan M \cong M \text{\quad and \quad}
  (X \otimes Y) \ogreaterthan M \cong X \ogreaterthan (Y \ogreaterthan M)
  \quad (X, Y \in \mathcal{C}, M \in \mathcal{M})
\end{equation*}
satisfying the axioms similar to those for monoidal categories. See \cite{MR1976459} for the precise definitions of a left $\mathcal{C}$-module category and related notions.

Now suppose that $\mathcal{C}$ is a finite tensor category over a field $k$. We say that a left $\mathcal{C}$-module category $\mathcal{M}$ is {\em finite} if its underlying category is a finite abelian category over $k$ and the action $\ogreaterthan: \mathcal{C} \times \mathcal{M} \to \mathcal{M}$ of $\mathcal{C}$ is $k$-linear in each variable and right exact in the first variable. Note that the action $\ogreaterthan$ is always exact in the second variable since, for each $V \in \mathcal{C}$, there are adjunctions
\begin{equation*}
  V^* \ogreaterthan (-)
  \quad \dashv \quad V \ogreaterthan (-)
  \quad \dashv \quad {}^* V \ogreaterthan (-).
\end{equation*}

Let $\mathcal{M}$ be a finite left $\mathcal{C}$-module category. For $M \in \mathcal{M}$, the functor $(-) \ogreaterthan M$ has a right adjoint by Lemma~\ref{lem:EW-thm}. We denote it by $\iHom(M, -)$. By definition, there is an isomorphism of vector spaces
\begin{equation}
  \label{eq:internal-Hom-def}
  \Hom_{\mathcal{C}}(V, \iHom(M, N)) \cong \Hom_{\mathcal{M}}(V \ogreaterthan M, N)
\end{equation}
natural in the variables $V$ and $N$. The assignment $(M, N) \mapsto \iHom(M, N)$ uniquely extends to a functor $\iHom: \mathcal{M}^{\op} \times \mathcal{M} \to \mathcal{C}$, called the {\em internal Hom functor} for $\mathcal{M}$, in such a way that \eqref{eq:internal-Hom-def} is natural also in the variables $M$ and $N$.

The counit of the adjunction $(-) \ogreaterthan M \dashv \iHom(M, -)$, denoted by
\begin{equation}
  \label{eq:internal-Hom-eval}
  \ieval_{M,N}: \iHom(M, N) \ogreaterthan M \to N
  \quad (N \in \mathcal{M}),
\end{equation}
is often called the {\em evaluation}. For $L, M, N \in \mathcal{M}$, the {\em composition}
\begin{equation}
  \label{eq:internal-Hom-composition}
  \icomp_{L,M,N}: \iHom(M, N) \otimes \iHom(L, M) \to \iHom(L, N)
\end{equation}
is defined to be the morphism corresponding to
\begin{align*}
  (\iHom(M, N) & \otimes \iHom(L, M)) \ogreaterthan L \\[-8pt]
  \cong \iHom(M, N) & \ogreaterthan (\iHom(L, M) \ogreaterthan L)
  \xrightarrow{\ \id \ogreaterthan \ieval_{L,M} \ } \iHom(M, N) \ogreaterthan M
  \xrightarrow{\ \ieval_{M,N} \ } N  
\end{align*}
via natural isomorphism \eqref{eq:internal-Hom-def}, and the {\em identity}
\begin{equation}
  \label{eq:internal-Hom-unit}
  \iId_M: \unitobj \to \iEnd(M) \quad (=\iHom(M, M))
\end{equation}
is the morphism corresponding to the isomorphism $\unitobj \ogreaterthan M \cong M$ via \eqref{eq:internal-Hom-def}. The composition and the identity behave like those in a usual category ({\it i.e.}, $\mathcal{M}$ is a $\mathcal{C}$-enriched category). In particular, $\iEnd(M)$ is an algebra in $\mathcal{C}$.

\begin{example}
  \label{ex:mod-cat-1}
  Set $\mathcal{V} = \mbox{{\sf mod}-}k$. Every finite abelian category $\mathcal{M}$ over $k$ has a natural structure of a finite left $\mathcal{V}$-module category with action ``$\cdot$'' determined by
  \begin{equation*}
    \Hom_{\mathcal{A}}(V \cdot M, N) \cong \Hom_{k}(V, \Hom_{\mathcal{M}}(M, N))
    \quad (V \in \mathcal{V}, M, N \in \mathcal{M}).
  \end{equation*}
  By definition, $\iHom(M, N) = \Hom_{\mathcal{M}}(M, N)$ for all $M, N \in \mathcal{M}$. In this example, \eqref{eq:internal-Hom-eval}, \eqref{eq:internal-Hom-composition} and \eqref{eq:internal-Hom-unit} coincide with the evaluation, the composition of maps and the identity map, respectively.
\end{example}

\begin{example}
  \label{ex:mod-cat-2}
  Let $\mathcal{B}$ and $\mathcal{C}$ be a finite tensor categories, and let $F: \mathcal{B} \to \mathcal{C}$ be a $k$-linear right exact strong monoidal functor. Then $\mathcal{C}$ is a finite left $\mathcal{B}$-module category with action given by $X \ogreaterthan V = F(X) \otimes V$ ($X \in \mathcal{B}$, $V \in \mathcal{C}$). By Lemma~\ref{lem:EW-thm}, $F$ has a right adjoint functor $R$. Since
  \begin{equation*}
    \Hom_{\mathcal{C}}(X \ogreaterthan V, W) \cong \Hom_{\mathcal{C}}(F(X), W \otimes V^*) \cong \Hom_{\mathcal{B}}(X, R(W \otimes V^*),
  \end{equation*}
  the internal Hom functor is given by $\iHom(V, W) = R(W \otimes V^*)$.
\end{example}

\begin{example}
  \label{ex:mod-cat-3}
  Let $A$ be an algebra in a finite tensor category $\mathcal{C}$. The category $\mathcal{C}_A$ of right $A$-modules in $\mathcal{C}$ has a natural structure of a finite left $\mathcal{C}$-module category with action given by $X \ogreaterthan M = X \otimes M$ for $X \in \mathcal{C}$ and $M \in \mathcal{C}_A$. We have
  \begin{equation*}
    \iHom(M, N) = (M \otimes_A {}^* N)^*
    \quad (M, N \in \mathcal{C}_A),
  \end{equation*}
  where $\otimes_A$ is the tensor product over $A$ \cite[Example 2.10.8]{MR1976459}.
\end{example}

We go back to the general setting. Let $\mathcal{M}$ be a finite module category over a finite tensor category $\mathcal{C}$. Following \cite{MR1976459}, there is a natural isomorphism
\begin{equation}
  \label{eq:internal-Hom-tensor}
  \iHom(X \ogreaterthan M, Y \ogreaterthan N) \cong Y \otimes \iHom(M, N) \otimes X^*
  \quad (M, N \in \mathcal{M}, X, Y \in \mathcal{C}).
\end{equation}
For $M \in \mathcal{M}$, the monad $T$ associated to~\eqref{eq:internal-Hom-def} is given by
\begin{equation*}
  T = \iHom(M, (-) \ogreaterthan M) \cong (-) \otimes \iHom(M, M) = (-) \otimes A,
\end{equation*}
where $A = \iEnd(M)$. Thus a $T$-module is precisely a right $A$-module in $\mathcal{C}$, and the comparison functor \cite[VI.3]{MR1712872} for \eqref{eq:internal-Hom-def} is given by
\begin{equation*}
  K_M: \mathcal{M} \to \mathcal{C}_A,
  \quad N \mapsto \iHom(M, N)
  \quad (N \in \mathcal{M}),
\end{equation*}
where the action of $A$ on $\iHom(M, N)$ is given by \eqref{eq:internal-Hom-composition} with $L = M$. By~\eqref{eq:internal-Hom-tensor}, the functor $K_M$ is in fact a functor of left $\mathcal{C}$-module categories. Applying the Barr-Beck theorem \cite[VI.7]{MR1712872} to $K_M$, we obtain the following theorem:

\begin{theorem}[{\cite{EGNO-Lect}}]
  \label{thm:mod-cat-comparison}
  The functor $K_M$ above is an equivalence of left $\mathcal{C}$-module categories if and only if the following two conditions are satisfied:
  \begin{enumerate}
  \item [(K1)] The functor $\iHom(M, -): \mathcal{M} \to \mathcal{C}$ is exact.
  \item [(K2)] Every object of $\mathcal{M}$ is a quotient of an object of the form $X \ogreaterthan M$, $X \in \mathcal{C}$.
  \end{enumerate}
\end{theorem}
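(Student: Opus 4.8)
The plan is to apply the Barr--Beck (a.k.a. Beck's monadicity) theorem to the adjunction $(-) \ogreaterthan M \dashv \iHom(M,-)$, whose associated monad $T$ we have already identified with $(-) \otimes A$, where $A = \iEnd(M)$. Barr--Beck characterizes exactly when the comparison functor $K_M \colon \mathcal{M} \to {}_T\mathcal{C} = \mathcal{C}_A$ is an equivalence: namely, $\iHom(M,-)$ must be conservative (reflect isomorphisms) and must preserve coequalizers of $\iHom(M,-)$-split pairs. So the work is to show that conditions (K1) and (K2) together are equivalent to these two hypotheses, in the present finite-abelian setting.

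First I would treat the preservation of split coequalizers. Since $\mathcal{M}$ is a finite abelian category, every pair of parallel morphisms has a coequalizer, and a functor between finite abelian categories preserves all coequalizers precisely when it is right exact. Thus (K1), which asserts that $\iHom(M,-)$ is \emph{exact}, in particular gives right exactness and hence preservation of all coequalizers, a fortiori the $\iHom(M,-)$-split ones. Conversely, if $K_M$ is an equivalence then $\iHom(M,-)$ is, up to equivalence, the forgetful functor $\mathcal{C}_A \to \mathcal{C}$ composed with nothing else on the relevant side --- more precisely $F \circ K_M$ where $F\colon \mathcal{C}_A \to \mathcal{C}$ forgets the action --- and this composite is exact because $F$ is exact (it creates exact sequences, by the Eilenberg--Moore description used in Lemma~\ref{lem:finite-T-mod}) and $K_M$ is exact being an equivalence; so (K1) holds. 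Hence in this setting (K1) is equivalent to the ``preserves split coequalizers'' clause of Barr--Beck.

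Next I would handle conservativity. The forgetful functor $F\colon {}_T\mathcal{C} \to \mathcal{C}$ is always conservative for a monad $T$ (a $T$-module map that is an isomorphism in $\mathcal{C}$ has an inverse that is automatically a $T$-module map), so $\iHom(M,-) = F \circ K_M$ is conservative whenever $K_M$ is; this gives the ``only if'' direction for this clause. For the ``if'' direction one must see that (K2) forces $\iHom(M,-)$ to be conservative. Here is where I expect the main subtlety: (K2) says every object of $\mathcal{M}$ is a quotient of one of the form $X \ogreaterthan M$. One shows that the counit $\ieval_{M,N}\colon \iHom(M,N)\ogreaterthan M \to N$ is an epimorphism for all $N$ --- indeed, by adjunction and (K2), applying $\iHom(M,-)$ to an epi $X\ogreaterthan M \twoheadrightarrow N$ and using the triangle identities exhibits $\ieval_{M,N}$ as a split epi after precomposition, hence epi. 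From the counit being pointwise epi one deduces that $\iHom(M,-)$ reflects zero objects, and then --- using that $\iHom(M,-)$ is additive and (via (K1)) exact --- that it reflects isomorphisms: if $\iHom(M,f)$ is an iso, then $\iHom(M,\ker f) = 0 = \iHom(M, \operatorname{coker} f)$, so $\ker f$ and $\operatorname{coker} f$ vanish and $f$ is an iso. (Strictly this argument uses (K1) as well, but that is harmless since Barr--Beck needs both hypotheses simultaneously and we are proving the biconditional with both (K1) and (K2).)

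Finally I would assemble the pieces: $K_M$ is an equivalence iff the Barr--Beck hypotheses hold iff (K1) and (K2) hold, and the fact that $K_M$ is a functor of left $\mathcal{C}$-module categories (already noted, via the isomorphism~\eqref{eq:internal-Hom-tensor}) upgrades the equivalence of categories to an equivalence of left $\mathcal{C}$-module categories without extra work. The main obstacle, as indicated, is the careful deduction that (K2) yields conservativity of the internal-Hom functor; the rest is a matter of matching the abstract monadicity criterion to the exactness language natural in finite abelian categories.
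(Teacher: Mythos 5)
Your route is the same as the paper's: the paper simply applies the Barr--Beck theorem to the adjunction $(-)\ogreaterthan M \dashv \iHom(M,-)$, whose associated monad is $(-)\otimes A$ with $A=\iEnd(M)$, and refers to the literature for the details, so translating the monadicity hypotheses into (K1) and (K2) is exactly the content that needs to be supplied. Your translation works in the direction needed for ``if'': (K1) gives preservation of all coequalizers (in particular the $\iHom(M,-)$-split ones), and (K2) gives, via the factorization of an epimorphism $X\ogreaterthan M\twoheadrightarrow N$ through the counit, that each $\ieval_{M,N}$ is epic, whence $\iHom(M,-)$ reflects zero objects and then (using exactness) isomorphisms.

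The gap is in the ``only if'' direction: you show that an equivalence $K_M$ forces (K1) and forces $\iHom(M,-)$ to be conservative, but conservativity is not condition (K2), and nowhere do you argue that the Barr--Beck hypotheses (or the equivalence itself) imply (K2). As written, your concluding chain of biconditionals therefore has one unproved implication. The repair is short: if $K_M$ is an equivalence, then $\iHom(M,-)$, being the forgetful functor $\mathcal{C}_A\to\mathcal{C}$ composed with the equivalence $K_M$, is faithful; faithfulness of a right adjoint makes the counit $\ieval_{M,N}$ epic, so every $N$ is a quotient of $\iHom(M,N)\ogreaterthan M$, which is (K2). (Equivalently: every right $A$-module is a quotient of a free module $X\otimes A\cong K_M(X\ogreaterthan M)$, and one transports this back along the equivalence.) With that step added your argument is complete and coincides with the paper's intended proof; the upgrade from an equivalence of categories to an equivalence of left $\mathcal{C}$-module categories via \eqref{eq:internal-Hom-tensor} is fine as you state it.
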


See also \cite{2013arXiv1312.7188D}, where a detailed proof of this theorem is given. They also studied conditions equivalent to (K1) and (K2). For later use, we note from \cite{2013arXiv1312.7188D} that the condition (K2) is equivalent to that $\iHom(M, -)$ is faithful.

\section{The central Hopf monad}
\label{thm:central-Hopf-monad}

\subsection{Ends and coends}
\label{subsec:ends}

Let $\mathcal{A}$ and $\mathcal{B}$ be categories, and let $S$ and $T$ be functors from $\mathcal{A}^{\op} \times \mathcal{A}$ to $\mathcal{B}$. A {\em dinatural transformation} $\xi: S \dinatto T$ is a family
\begin{equation*}
  \xi = \{ \xi_X: S(X, X) \to T(X, X) \}_{X \in \mathcal{A}}
\end{equation*}
of morphisms in $\mathcal{B}$ parametrized by the objects of $\mathcal{A}$ such that
\begin{equation*}
  T(\id_X, f) \circ \xi_X \circ S(f, \id_X)
  = T(f, \id_Y) \circ \xi_Y \circ S(\id_Y, f)
\end{equation*}
for all morphism $f: X \to Y$ in $\mathcal{A}$. We regard an object $X \in \mathcal{B}$ as a functor from $\mathcal{A}^{\op} \times \mathcal{A}$ to $\mathcal{B}$ sending all morphisms to $\id_X$. An {\em end} of $S$ is a pair $(E, p)$ consisting of an object $E \in \mathcal{B}$ and a dinatural transformation $p: E \dinatto S$ satisfying a certain universal property (by abuse of terminology, we also refer to $E$ and $p$ as an end). A {\em coend} of $T$ is a pair $(C, i)$ consisting of an object $C \in \mathcal{B}$ and a `universal' dinatural transformation $i: T \dinatto C$. The universal property ensures that a (co)end is unique up to isomorphism if it exists. An end $(E, p)$ of $S$ and a coend $(C, i)$ of $T$ will be denoted, respectively, by
\begin{equation*}
  E = \int_{X \in \mathcal{A}} S(X, X)
  \quad \text{and} \quad
  C = \int^{X \in \mathcal{A}} T(X, X).
\end{equation*}

We refer the reader to \cite{MR1712872} for general treatments of (co)ends. For reader's convenience, we here collect some formulas for (co)ends. Suppose that $\mathcal{A}$ is essentially small. Given two functors $F_1, F_2: \mathcal{A} \to \mathcal{B}$, we denote by $\NAT(F_1, F_2)$ the set of natural transformations from $F_1$ to $F_2$. Then
\begin{equation*}
  p_X: \NAT(F_1, F_2) \dinatto \Hom_{\mathcal{B}}(F_1(X), F_2(X)),
  \quad \alpha \mapsto \alpha_X
  \quad (X \in \mathcal{A})
\end{equation*}
is an end of $\Hom_{\mathcal{B}}(F_1(-), F_2(-))$. With integral notation, we have
\begin{equation}
  \label{eq:end-nat}
  \NAT(F_1, F_2) = \int_{X \in \mathcal{A}} \Hom_{\mathcal{B}}(F_1(X), F_2(X)).
\end{equation}

The following formula will be used extensively: If a coend of $T$ exists, then
\begin{equation}
  \label{eq:coend-hom}
  \Hom_{\mathcal{B}}(\int^{X \in \mathcal{A}} T(X,X), V)
  = \int_{X \in \mathcal{A}} \Hom_{\mathcal{B}}(T(X,X), V)
  \quad (V \in \mathcal{B}).
\end{equation}

Since the category $\mathbf{Set}$ is complete, the end of the right-hand side of \eqref{eq:coend-hom} exists without the assumption that a coend of $T$ exists. Thus, by the parameter theorem for ends \cite[IX.7]{MR1712872}, we obtain a functor
\begin{equation*}
  T^{\natural}: \mathcal{B} \to \mathbf{Set},
  \quad V \mapsto \int_{X \in \mathcal{A}} \Hom_{\mathcal{B}}(T(X, X), V)
  \quad (V \in \mathcal{B}).
\end{equation*}

\begin{lemma}
  \label{lem:coend-existence}
  A coend of $T$ exists if and only if $T^{\natural}$ is representable.
  \end{lemma}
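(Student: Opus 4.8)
The plan is to note that the statement unwinds directly to the definition of a coend together with the Yoneda lemma, so no substantial computation is involved. First I would recall the standard fact that, since $\mathbf{Set}$ is complete and $\mathcal{A}$ is essentially small, the end defining $T^{\natural}(V)$ exists and is canonically identified with the set of dinatural transformations from $T$ to the constant functor at $V$, a universal wedge being given by the family of evaluation maps; moreover this identification is natural in $V \in \mathcal{B}$ via postcomposition. In other words, $T^{\natural}(V)$ is the set of dinatural transformations $T \dinatto V$, functorially in $V$.

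For the ``only if'' direction, if a coend $(C, i)$ of $T$ exists then $i \colon T \dinatto C$ is a universal dinatural transformation, and formula~\eqref{eq:coend-hom} --- equivalently, the defining universal property of the coend --- asserts that the map $\Hom_{\mathcal{B}}(C, V) \to T^{\natural}(V)$, $f \mapsto (f \circ i_X)_{X \in \mathcal{A}}$, is a bijection natural in $V$. Hence $T^{\natural} \cong \Hom_{\mathcal{B}}(C, -)$ is representable, represented by the coend.

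For the ``if'' direction, suppose $T^{\natural} \cong \Hom_{\mathcal{B}}(C, -)$ for some $C \in \mathcal{B}$. By the Yoneda lemma this natural isomorphism corresponds to an element $i \in T^{\natural}(C)$, that is, to a dinatural transformation $i \colon T \dinatto C$, and the isomorphism itself is the map sending $f \colon C \to V$ to $(f \circ i_X)_{X \in \mathcal{A}}$. That this map is bijective for every $V$ says precisely that every dinatural transformation $T \dinatto V$ factors as $(f \circ i_X)_X$ for a unique $f$, which is the universal property of a coend; thus $(C, i)$ is a coend of $T$.

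The one point that needs care is verifying that ``$T^{\natural}$ is representable'' unpacks to exactly the universal property in the definition of a coend, and this is bookkeeping rather than a genuine obstacle: it amounts to tracking the naturality in $V$ of the end $\int_{X} \Hom_{\mathcal{B}}(T(X,X), V)$ and applying the Yoneda correspondence. With those identifications the equivalence follows at once, and it also shows that the representing object of $T^{\natural}$ is (canonically) the coend of $T$.
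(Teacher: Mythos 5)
Your proof is correct and follows essentially the same route as the paper: the forward direction is immediate from \eqref{eq:coend-hom}, and the converse applies the Yoneda lemma to the representing object to extract a dinatural family $i = \{i_X\}$ whose universal property is exactly that of a coend. The only cosmetic difference is that you phrase $T^{\natural}(V)$ explicitly as the set of dinatural transformations $T \dinatto V$, whereas the paper works directly with the family $\phi_{X,V}$ natural in $V$ and dinatural in $X$; these are the same identification.
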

\begin{proof}
  In view of \eqref{eq:coend-hom}, it is sufficient to show that a coend of $T$ exists if $T^{\natural}$ is representable. Let $C$ be an object representing $T^{\natural}$. By the definition of the functor $T^{\sharp}$, there exists a family
  \begin{equation*}
    \{ \phi_{X,V}: \Hom_{\mathcal{B}}(C, V) \to \Hom_{\mathcal{B}}(T(X,X), V) \}_{X \in \mathcal{A}, V \in \mathcal{B}}
  \end{equation*}
  of maps that is natural in $V$ and dinatural in $X$. By the Yoneda lemma, $\phi_{X,V}$ is induced by a morphism $i_X: T(X,X) \to C$. The family $i = \{ i_X \}$ is dinatural in $X$, and the pair $(C, i)$ is indeed a coend of $T$.
\end{proof}

\subsection{The central Hopf monad}
\label{subsec:central-Hopf-monad}

Suppose that $\mathcal{C}$ is a rigid monoidal category such that the coend
\begin{equation}
  \label{eq:Hopf-monad-Z}
  Z(V) = \int^{X \in \mathcal{C}} X^* \otimes V \otimes X
\end{equation}
exists for all $V \in \mathcal{C}$. Then the assignment $V \mapsto Z(V)$ extends to an endofunctor $Z$ on $\mathcal{C}$. Day and Street \cite{MR2342829} showed that the functor $Z$ has a structure of a monad such that ${}_Z \mathcal{C} \cong \mathcal{Z}(\mathcal{C})$ as categories. Following Brugui\`eres and Virelizier \cite{MR2869176}, the monad $Z$ has a structure of a quasitriangular Hopf monad and the isomorphism ${}_Z \mathcal{C} \cong \mathcal{Z}(\mathcal{C})$ is in fact an isomorphism of braided monoidal categories. We call the Hopf monad $Z$ the {\em central Hopf monad} on $\mathcal{C}$.

For later use, we recall from \cite{MR2342829} and \cite{MR2869176} the definition of the central Hopf monad and the construction of the isomorphism ${}_Z \mathcal{C} \cong \mathcal{Z}(\mathcal{C})$. For $V, X \in \mathcal{C}$, we denote by $i_V(X): X^* \otimes V \otimes X \to Z(V)$ the component of the universal dinatural transformation. Then the comonoidal structure
\begin{equation*}
  Z_0: Z(\unitobj) \to \unitobj
  \quad \text{and} \quad
  Z_2(V, W): Z(V \otimes W) \to Z(V) \otimes Z(W)
  \quad (V, W \in \mathcal{C})
\end{equation*}
are defined to be the unique morphisms such that $Z_0 \circ i_\unitobj(X) = \eval_X$ and
\begin{equation*}
  Z_2(V, W) \circ i_{V \otimes W}(X) = (i_V(X) \otimes i_W(X)) \circ (\id_{X^*} \otimes \id_V \otimes \coev_{X} \otimes \id_W \otimes \id_X)
\end{equation*}
for all $X \in \mathcal{C}$, respectively. The unit of $Z$ is given by $\eta_V = i_V(\unitobj)$ ($V \in \mathcal{C}$). To define the multiplication of $Z$, we note that
\begin{equation*}
  i_V^{(2)}(X, Y) := i_{Z(V)}(Y) \circ (\id_{Y^*} \otimes i_V(X) \otimes \id_Y) \quad (X, Y \in \mathcal{C})
\end{equation*}
is a coend of $(X_1, Y_1, X_2, Y_2) \mapsto X_2^* \otimes Y_2^* \otimes V \otimes X_1 \otimes Y_1$ ($X_1, X_2, Y_1, Y_2 \in \mathcal{C}$) by the Fubini theorem for coends \cite[IX.8]{MR1712872}. Hence we can define $\mu: Z^2 \to Z$ by
\begin{equation}
  \label{eq:Hopf-monad-Z-mult}
  \mu_V \circ i_V^{(2)}(X, Y) = i_V(X \otimes Y)
  \quad (V, X, Y \in \mathcal{C}).
\end{equation}
The left antipode of $Z$ is given in Remark~\ref{rem:Hopf-monad-Z-antipode} below. We omit the descriptions of the right antipode and the universal $R$-matrix of $Z$ since we will not use them.

Following \cite{MR2342829}, we establish an isomorphism ${}_Z \mathcal{C} \cong \mathcal{Z}(\mathcal{C})$ of categories. We first note that, by~\eqref{eq:end-nat} and \eqref{eq:coend-hom}, there are natural isomorphisms
\begin{align*}
  \Hom_{\mathcal{C}}(Z(V), W)
    & \cong \textstyle \int_{X \in \mathcal{C}} \Hom_{\mathcal{C}}(X^* \otimes V \otimes X, W) \\
    & \cong \textstyle \int_{X \in \mathcal{C}} \Hom_{\mathcal{C}}(V \otimes X, X \otimes W) \\
    & \cong \strut \NAT(V \otimes (-), (-) \otimes W)
\end{align*}
for $V, W \in \mathcal{C}$. Let $\partial_V(-): V \otimes (-) \to (-) \otimes Z(V)$ denote the natural transformation corresponding to $\id_{Z(V)}$ via the above chain of isomorphisms. If $V$ is a $Z$-module with action $\rho$, then one can check that
\begin{equation*}
  \sigma_V(X): V \otimes X \xrightarrow{\quad \partial_V(X) \quad} X \otimes Z(V) \xrightarrow{\quad \id_X \otimes \rho \quad} X \otimes V
  \quad (X \in \mathcal{C})
\end{equation*}
is a half-braiding for $V$. This construction gives rise to an isomorphism ${}_Z \mathcal{C} \cong \mathcal{Z}(\mathcal{C})$ of categories. The isomorphism is in fact monoidal and commutes with the forgetful functors to $\mathcal{C}$.

\begin{remark}
  \label{rem:Hopf-monad-Z-antipode}
  The Hopf monad $Z$ can be defined by using the natural transformation $\partial$ instead of the dinatural transformation $i$ (in fact, this is the way of \cite{MR2869176}). By using $\partial$, the left antipode is defined by
  \begin{equation}
    \label{eq:Hopf-monad-Z-antipode-2}
    (\id_X \otimes S_V) \circ \partial_{Z(V)^*}(X) = \partial_{V}({}^*X)^*
    \quad (V, X \in \mathcal{C}).
  \end{equation}
\end{remark}

\subsection{Existence of coends}

To apply the above Hopf monadic description of the center to finite tensor categories, we show that a coend of certain type of functors, including \eqref{eq:Hopf-monad-Z}, exists in a finite tensor category over a field $k$.

Given $k$-linear abelian categories $\mathcal{A}_1, \dotsc, \mathcal{A}_n$ and $\mathcal{C}$, we denote by
\begin{equation*}
  \LEX_n(\mathcal{A}_1, \dotsc, \mathcal{A}_n; \mathcal{C})
  \quad (\text{respectively, } \REX_n(\mathcal{A}_1, \dotsc, \mathcal{A}_n; \mathcal{C}))
\end{equation*}
the category of functors from ${\mathcal{A}_1 \times \dotsb \times \mathcal{A}_n}$ to $\mathcal{C}$ being $k$-linear left exact (respectively, right exact) in each variable. For simplicity, we write
\begin{equation*}
  \LEX(\mathcal{A}, \mathcal{C}) = \LEX_1(\mathcal{A}; \mathcal{C})
  \text{\quad and \quad}
  \REX(\mathcal{A}, \mathcal{C}) = \REX_1(\mathcal{A}; \mathcal{C}).
\end{equation*}
A {\em tensor product} \cite[\S5]{MR1106898} of $k$-linear abelian categories $\mathcal{A}_1, \dotsc, \mathcal{A}_n$ is a $k$-linear abelian category $\mathcal{T}$ endowed with $\boxtimes \in \REX_n(\mathcal{A}_1, \dotsc, \mathcal{A}_n; \mathcal{T})$ such that
\begin{equation*}
  \REX(\mathcal{T}, \mathcal{C}) \to \REX_n(\mathcal{A}_1, \dotsc, \mathcal{A}_n; \mathcal{C})
  \quad F \mapsto F \circ \boxtimes
  \quad (F \in \REX(\mathcal{T}, \mathcal{C}))
\end{equation*}
is an equivalence for any $k$-linear abelian category $\mathcal{C}$. A tensor product of $\mathcal{A}_1, \dotsc, \mathcal{A}_n$ does not always exist; see \cite{LopezFranco2013207}. If it exists, it is unique up to equivalence and is denoted by $\mathcal{A}_1 \boxtimes \dotsb \boxtimes \mathcal{A}_n$.

If $\mathcal{A} = \mbox{{\sf mod}-}A$ and $\mathcal{B} = \mbox{{\sf mod}-}B$ for some finite-dimensional $k$-algebras $A$ and $B$, then $\mbox{{\sf mod}-}(A \otimes_k B)$ is a tensor product of $\mathcal{A}$ and $\mathcal{B}$ with
\begin{equation*}
  \boxtimes: \mathcal{A} \times \mathcal{B} \to \mbox{{\sf mod}-}(A \otimes_k B),
  \quad (X, Y) \mapsto X \otimes_k Y
  \quad (X \in \mathcal{A}, Y \in \mathcal{B})
\end{equation*}
\cite[Proposition 5.3]{MR1106898}. The following lemma is obtained by using this realization of a tensor product of finite abelian categories:

\begin{lemma}[Deligne {\cite[Proposition 5.13]{MR1106898}}]
  \label{lem:Deligne-product}
  Let $\mathcal{A}$ and $\mathcal{B}$ be finite abelian categories over a field $k$. Then the following statements hold:
  \begin{enumerate}
  \item A tensor product $\mathcal{A} \boxtimes \mathcal{B}$ exists and is a finite abelian category over $k$.
  \item The functor $\boxtimes: \mathcal{A} \times \mathcal{B} \to \mathcal{A} \boxtimes \mathcal{B}$ is $k$-linear and exact in each variable.
  \item The functor $\LEX(\mathcal{A} \boxtimes \mathcal{B}, \mathcal{C}) \to \LEX_2(\mathcal{A}, \mathcal{B}; \mathcal{C})$ induced by $\boxtimes$ is an equivalence of categories for any $k$-linear abelian category $\mathcal{C}$.
  \item There is a natural isomorphism
    \begin{equation*}
      \Hom_{\mathcal{A} \boxtimes \mathcal{B}}(V \boxtimes W, X \boxtimes Y)
      \cong \Hom_{\mathcal{A}}(V, X) \otimes_k \Hom_{\mathcal{B}}(W, Y)
    \end{equation*}
    for $V, X \in \mathcal{A}$ and $W, Y \in \mathcal{B}$.
  \end{enumerate}
\end{lemma}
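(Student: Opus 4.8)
The plan is to work throughout with a concrete model. Since $\mathcal{A}$ and $\mathcal{B}$ are finite abelian categories over $k$, I may fix equivalences $\mathcal{A} \simeq \mbox{{\sf mod}-}A$ and $\mathcal{B} \simeq \mbox{{\sf mod}-}B$ for finite-dimensional $k$-algebras $A$ and $B$, and set $\mathcal{A} \boxtimes \mathcal{B} := \mbox{{\sf mod}-}(A \otimes_k B)$ with $\boxtimes \colon \mathcal{A} \times \mathcal{B} \to \mathcal{A} \boxtimes \mathcal{B}$ the functor $(X, Y) \mapsto X \otimes_k Y$. The one genuinely non-formal ingredient, namely that this model really is a tensor product in the sense recalled above --- that is, that $F \mapsto F \circ \boxtimes$ induces an equivalence $\REX(\mathcal{A} \boxtimes \mathcal{B}, \mathcal{C}) \to \REX_2(\mathcal{A}, \mathcal{B}; \mathcal{C})$ for every $k$-linear abelian category $\mathcal{C}$ --- is \cite[Proposition 5.3]{MR1106898}, which I take as given. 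Everything else is bookkeeping around this model.

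Parts (1) and (2) are then immediate. Since $\dim_k(A \otimes_k B) = (\dim_k A)(\dim_k B) < \infty$, the category $\mbox{{\sf mod}-}(A \otimes_k B)$ is by definition a finite abelian category over $k$, which together with the cited existence gives (1). For (2), under the model $\boxtimes$ is $(-) \otimes_k (-)$, and since every $k$-module is flat the functors $(-) \otimes_k Y$ and $X \otimes_k (-)$ are exact; $k$-bilinearity of $\otimes_k$ gives the required $k$-linearity.

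For the $\Hom$-formula (4), which under the model asserts that the evident map
\[
  \Hom_A(V, X) \otimes_k \Hom_B(W, Y) \longrightarrow \Hom_{A \otimes_k B}(V \otimes_k W, X \otimes_k Y), \qquad f \otimes_k g \longmapsto f \otimes_k g,
\]
is an isomorphism, I would argue by dévissage. Both sides are $k$-linear and left exact contravariant in each of the variables $V$ and $W$ (on the source because $\Hom_A(-,X)$ and $\Hom_B(-,Y)$ are left exact and $\otimes_k$ over $k$ is exact; on the target because $(-) \otimes_k W$ and $V \otimes_k (-)$ are exact), and the comparison map is natural. Applying this to finite free presentations of $V$ over $A$ and of $W$ over $B$, and invoking the five lemma, reduces the claim to $V = A^{\oplus m}$, $W = B^{\oplus n}$, where both sides are canonically $(X \otimes_k Y)^{\oplus m n}$; naturality in $X$ and $Y$ is clear from the construction.

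For the $\LEX$ universal property (3), I would deduce it from the $\REX$ one by duality. Writing $D = \Hom_k(-, k)$, the usual duality of finite-dimensional module categories furnishes equivalences $\mathcal{A}^{\op} \simeq \mbox{{\sf mod}-}A^{\op}$, $\mathcal{B}^{\op} \simeq \mbox{{\sf mod}-}B^{\op}$ and $(\mathcal{A} \boxtimes \mathcal{B})^{\op} \simeq \mbox{{\sf mod}-}(A^{\op} \otimes_k B^{\op})$, under which $\boxtimes^{\op}$ is identified with the canonical functor $\mathcal{A}^{\op} \times \mathcal{B}^{\op} \to \mathcal{A}^{\op} \boxtimes \mathcal{B}^{\op}$ (again $(-) \otimes_k (-)$, compatibly with the natural isomorphism $D(X \otimes_k Y) \cong D(X) \otimes_k D(Y)$ for finite-dimensional modules). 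Since a $k$-linear functor is left exact iff its opposite is right exact (and likewise for bifunctors), applying $(-)^{\op}$ throughout identifies the functor $\LEX(\mathcal{A} \boxtimes \mathcal{B}, \mathcal{C}) \to \LEX_2(\mathcal{A}, \mathcal{B}; \mathcal{C})$ with the opposite of $\REX(\mathcal{A}^{\op} \boxtimes \mathcal{B}^{\op}, \mathcal{C}^{\op}) \to \REX_2(\mathcal{A}^{\op}, \mathcal{B}^{\op}; \mathcal{C}^{\op})$; hence the former is an equivalence iff the latter is. But the latter is an equivalence by the defining ($\REX$) property of the tensor product $\mathcal{A}^{\op} \boxtimes \mathcal{B}^{\op}$ --- legitimate since $A^{\op}$ and $B^{\op}$ are again finite-dimensional and $\mathcal{C}^{\op}$ is a $k$-linear abelian category --- so (3) follows. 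The only mildly delicate point in the whole argument is precisely this matching-up of the $\LEX$- and $\REX$-versions through duality; once the concrete model and the cited $\REX$-universal property are in hand, no real obstacle remains.
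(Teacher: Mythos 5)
Your proposal is correct and follows the same route the paper indicates: the paper offers no argument of its own beyond citing Deligne (Propositions 5.3 and 5.13) and remarking that the lemma follows from the realization $\mathcal{A} \boxtimes \mathcal{B} \simeq \mbox{{\sf mod}-}(A \otimes_k B)$, which is exactly the model you work in. Your d\'evissage for the Hom-formula and the opposite-category duality argument deducing the $\LEX$ universal property from the defining $\REX$ one (via $(\mbox{{\sf mod}-}(A \otimes_k B))^{\op} \simeq \mbox{{\sf mod}-}(A^{\op} \otimes_k B^{\op})$) are sound fillings-in of the details the paper delegates to the citation.
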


Now let $\mathcal{A}$ and $\mathcal{B}$ be finite abelian categories over $k$. We consider the functor
\begin{equation*}
  \Phi_1: \mathcal{A} \times \mathcal{B}^{\op} \times \mathcal{B} \to \mathcal{A},
  \quad (V, X, Y) \mapsto \Hom_{\mathcal{B}}(X, Y) \cdot V
  \quad (V \in \mathcal{A}, X, Y \in \mathcal{B}),
\end{equation*}
where ``$\cdot$'' is the canonical action of $\mbox{{\sf mod}-}k$ on $\mathcal{A}$ given in Example~\ref{ex:mod-cat-1}. By Part (3) of the above lemma, this functor induces a $k$-linear left exact functor
\begin{equation*}
  \Phi_2: \mathcal{A} \boxtimes \mathcal{B}^{\op} \boxtimes \mathcal{B} \to \mathcal{A},
  \quad V \boxtimes X \boxtimes Y
  \mapsto \Phi_1(V, X, Y).
\end{equation*}
By Part (2) of the above lemma, we have a functor
\begin{equation*}
  \Phi_3: \mathcal{A} \boxtimes \mathcal{B}^{\op} \to \LEX(\mathcal{B}, \mathcal{A}),
  \quad M \mapsto (V \mapsto \Phi_2(M \boxtimes V)).
\end{equation*}
For simplicity, we express the functor $\Phi_3$ obtained in this way as
\begin{equation}
  \label{eq:Lex-A-B-equiv-2}
  \mathcal{A} \boxtimes \mathcal{B}^{\op} \to \LEX(\mathcal{B}, \mathcal{A}),
  \quad V \boxtimes W \mapsto \Hom_{\mathcal{B}}(W, -) \cdot V
  \quad (V \in \mathcal{A}, W \in \mathcal{B})
\end{equation}

\begin{lemma}
  \label{lem:Lex-A-B-equiv-1}
  The functor \eqref{eq:Lex-A-B-equiv-2} is an equivalence.
\end{lemma}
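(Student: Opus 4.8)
The strategy is to reduce to the case of module categories over finite\nobreakdash-dimensional algebras and to recognize \eqref{eq:Lex-A-B-equiv-2} there as a Morita\nobreakdash-type equivalence furnished by the Eilenberg--Watts theorem (Lemma~\ref{lem:EW-thm}). Concretely, first fix $k$-linear equivalences $\mathcal{A} \simeq \mbox{{\sf mod}-}A$ and $\mathcal{B} \simeq \mbox{{\sf mod}-}B$ for finite\nobreakdash-dimensional $k$-algebras $A$ and $B$. Since the functor \eqref{eq:Lex-A-B-equiv-2} is assembled entirely out of canonical data --- the bifunctor $\boxtimes$ with its universal property (Lemma~\ref{lem:Deligne-product}), the canonical $\mbox{{\sf mod}-}k$-action ``$\cdot$'' of Example~\ref{ex:mod-cat-1}, and $\Hom$ --- and all of these are transported along $k$-linear equivalences in the evident way, it is enough to prove the statement when $\mathcal{A} = \mbox{{\sf mod}-}A$ and $\mathcal{B} = \mbox{{\sf mod}-}B$.

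Next I would identify both sides with one and the same category of bimodules. On the domain side, $k$-linear duality $W \mapsto W^{\vee} := \Hom_k(W,k)$ is an equivalence $\mathcal{B}^{\op} \simeq \mbox{{\sf mod}-}B^{\op}$, so by Deligne's realization of the tensor product preceding Lemma~\ref{lem:Deligne-product} one gets $\mathcal{A} \boxtimes \mathcal{B}^{\op} \simeq \mbox{{\sf mod}-}(A \otimes_k B^{\op})$, i.e.\ the category of finite\nobreakdash-dimensional $(B,A)$-bimodules, with $V \boxtimes W$ corresponding to $W^{\vee} \otimes_k V$. On the codomain side, Lemma~\ref{lem:EW-thm} says $F \in \LEX(\mathcal{B}, \mathcal{A})$ precisely when $F \cong \Hom_B(M, -)$ for some finite\nobreakdash-dimensional $(A,B)$-bimodule $M$; upgrading this to an equivalence --- either directly, using that $M \mapsto \Hom_B(M, -)$ is fully faithful, or by passing to left adjoints and invoking the standard equivalence $\REX(\mbox{{\sf mod}-}A, \mbox{{\sf mod}-}B) \simeq \{(A,B)\text{-bimodules}\}$, $N \mapsto (-) \otimes_A N$, which itself follows from Lemma~\ref{lem:EW-thm} --- and then dualizing bimodules once more, one obtains an equivalence $\LEX(\mathcal{B}, \mathcal{A}) \simeq \{(B,A)\text{-bimodules}\}$ that sends $G$ to the value $G(I_B)$ on the injective cogenerator $I_B := ({}_B B)^{\vee}$ of $\mbox{{\sf mod}-}B$, equipped with the left $B$-action through $B \cong \End_{\mbox{{\sf mod}-}B}(I_B)$.

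It then remains to match the two functors. Both the domain identification and the composite of \eqref{eq:Lex-A-B-equiv-2} with the codomain identification are $k$-linear and left exact as functors out of $\mathcal{A} \boxtimes \mathcal{B}^{\op}$, so by the universal property of $\boxtimes$ (Lemma~\ref{lem:Deligne-product}(3)) it suffices to compare them on objects $V \boxtimes W$ together with their bilinearity data. On such an object \eqref{eq:Lex-A-B-equiv-2} produces the functor $\Hom_{\mathcal{B}}(W, -) \cdot V = \Hom_B(W, -) \otimes_k V$, whose value on $I_B$ is $\Hom_B(W, I_B) \otimes_k V$; the natural isomorphism $\Hom_B(W, I_B) \cong W^{\vee}$ of left $B$-modules (coming from $\Hom_B(W, ({}_BB)^{\vee}) \cong \Hom_{B\text{-mod}}({}_B B, W^{\vee}) \cong W^{\vee}$) identifies this with $W^{\vee} \otimes_k V$, exactly the image of $V \boxtimes W$ under the domain identification. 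Hence the composite agrees with the domain identification up to natural isomorphism, and since the latter is an equivalence, so is \eqref{eq:Lex-A-B-equiv-2}.

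I expect the only genuinely delicate point to be the bookkeeping of opposite categories and $k$-linear duals in the two preceding steps --- in particular the fact that $M \mapsto \Hom_B(M, -)$ is \emph{contravariant} in $M$, which is what forces the extra duality on bimodules and makes the two sides line up covariantly. Everything else is a routine check. (One could instead avoid choosing algebras and argue that \eqref{eq:Lex-A-B-equiv-2} is $k$-linear and left exact, fully faithful on objects of the form $V \boxtimes W$, and has image containing an injective cogenerator of $\LEX(\mathcal{B}, \mathcal{A})$, concluding by resolving arbitrary objects of $\mathcal{A} \boxtimes \mathcal{B}^{\op}$ by injective cogenerators of that type; this carries the same content but is longer.)
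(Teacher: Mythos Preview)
Your proposal is correct and follows essentially the same route as the paper: reduce to $\mathcal{A}=\mbox{{\sf mod}-}A$, $\mathcal{B}=\mbox{{\sf mod}-}B$, identify both $\mathcal{A}\boxtimes\mathcal{B}^{\op}$ and $\LEX(\mathcal{B},\mathcal{A})$ with a category of finite-dimensional bimodules via the Eilenberg--Watts theorem, and then check that \eqref{eq:Lex-A-B-equiv-2} becomes the resulting equivalence. The only cosmetic differences are that the paper dualizes $V$ rather than $W$ (writing $V\boxtimes W\mapsto V^{*}\otimes_k W\in(A\mbox{-{\sf mod}-}B)^{\op}$) and leaves the final compatibility check implicit, whereas you carry it out explicitly by evaluating on the injective cogenerator.
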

\begin{proof}
  We may assume that $\mathcal{A} = \mbox{{\sf mod}-}A$ and $\mathcal{B} = \mbox{{\sf mod}-}B$ for some finite-dimensional algebras $A$ and $B$. By Lemma~\ref{lem:EW-thm} and the Yoneda lemma, we see that the following functor is an equivalence:
  \begin{equation*}
    (A\mbox{-{\sf mod}-}B)^{\op} \to \mathcal{L} := \LEX(\mathcal{A}, \mathcal{B}),  \quad M \mapsto \Hom_B(M, -) \quad (M \in A\mbox{-{\sf mod}-}B),
  \end{equation*}
  where $A\mbox{-{\sf mod}-}B$ is the category of finite-dimensional $A$-$B$-bimodules. In view of the above realization of a tensor product, we also have an equivalence
  \begin{equation*}
    \mathcal{A} \boxtimes \mathcal{B}^{\op} \to (A\mbox{-{\sf mod}-}B)^{\op},
    \quad V \boxtimes W \mapsto V^* \otimes_k W
    \quad (V \in \mathcal{A}, W \in \mathcal{B}),
  \end{equation*}
  where $A$ acts on $V^* := \Hom_k(V, k)$ by $a \cdot f = f(- \cdot a)$ ($a \in A, f \in V^*$). One can check that \eqref{eq:Lex-A-B-equiv-2} is obtained by composing these equivalences.
\end{proof}

The following description of a quasi-inverse of \eqref{eq:Lex-A-B-equiv-2} is important:

\begin{lemma}
  \label{lem:Lex-A-B-equiv-2}
  Notations are the same as in Lemma~\ref{lem:Lex-A-B-equiv-1}. For all $F \in \LEX(\mathcal{B}, \mathcal{A})$, a coend of the functor
  \begin{equation}
    \label{eq:Lex-A-B-equiv-3}
    \mathcal{B} \times \mathcal{B}^{\op}
    \to \mathcal{A} \boxtimes \mathcal{B}^{\op},
    \quad (X, Y) \mapsto F(X) \boxtimes Y
    \quad (X, Y \in \mathcal{B})
  \end{equation}
  exists. A quasi-inverse of \eqref{eq:Lex-A-B-equiv-2} is given by
  \begin{equation*}
    \LEX(\mathcal{B}, \mathcal{A}) \to \mathcal{A} \boxtimes \mathcal{B}^{\op},
    \quad F \mapsto \int^{X \in \mathcal{B}} F(X) \boxtimes X
    \quad (F \in \LEX(\mathcal{B}, \mathcal{A})).
  \end{equation*}
\end{lemma}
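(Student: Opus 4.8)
The plan is to show that the assignment $F \mapsto \int^{X} F(X) \boxtimes X$ is (canonically isomorphic to) a quasi-inverse of the equivalence \eqref{eq:Lex-A-B-equiv-2}, and in the course of doing so to establish that the coend defining it exists. Since \eqref{eq:Lex-A-B-equiv-2} is already known to be an equivalence by Lemma~\ref{lem:Lex-A-B-equiv-1}, it suffices to exhibit a functor $G: \LEX(\mathcal{B}, \mathcal{A}) \to \mathcal{A} \boxtimes \mathcal{B}^{\op}$ together with a natural isomorphism $\Phi_3 \circ G \cong \id$ (the other composite isomorphism then comes for free from general nonsense about equivalences). I would first take $G$ to be the honest quasi-inverse of $\Phi_3$ furnished by Lemma~\ref{lem:Lex-A-B-equiv-1}, so that $G$ is well-defined, and then prove that for each $F \in \LEX(\mathcal{B},\mathcal{A})$ the object $G(F) \in \mathcal{A} \boxtimes \mathcal{B}^{\op}$ has the universal property of the coend $\int^{X \in \mathcal{B}} F(X) \boxtimes X$; this simultaneously yields existence of the coend and identifies it with $G(F)$.

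First I would reduce to the algebra model $\mathcal{A} = \mbox{{\sf mod}-}A$, $\mathcal{B} = \mbox{{\sf mod}-}B$, exactly as in the proof of Lemma~\ref{lem:Lex-A-B-equiv-1}, and track an arbitrary $F \in \LEX(\mathcal{B},\mathcal{A})$ through the chain of equivalences there: by Lemma~\ref{lem:EW-thm}, $F \cong \Hom_B(M,-)$ for a finite-dimensional $A$-$B$-bimodule $M$, and the quasi-inverse $G$ sends $F$ to the object of $\mathcal{A} \boxtimes \mathcal{B}^{\op} \simeq \mbox{{\sf mod}-}(A \otimes_k B^{\op})$ corresponding to $M$ under $V \boxtimes W \leftrightarrow V^* \otimes_k W$. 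Then I would verify directly, via Lemma~\ref{lem:Deligne-product}(4) and the formula \eqref{eq:coend-hom}, that
\begin{equation*}
  \Hom_{\mathcal{A} \boxtimes \mathcal{B}^{\op}}\Bigl( \int^{X \in \mathcal{B}} F(X) \boxtimes X,\ V \boxtimes W \Bigr)
  \cong \int_{X \in \mathcal{B}} \Hom_{\mathcal{A}}(F(X), V) \otimes_k \Hom_{\mathcal{B}^{\op}}(X, W),
\end{equation*}
and that the right-hand side, by \eqref{eq:end-nat} and the left exactness of $F$, computes $\Hom_{\mathcal{A} \boxtimes \mathcal{B}^{\op}}(G(F), V \boxtimes W)$. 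Because objects of the form $V \boxtimes W$ generate $\mathcal{A} \boxtimes \mathcal{B}^{\op}$ under colimits and the coend is characterized by its $\Hom$-functor via \eqref{eq:coend-hom}, this pins down the coend and shows it equals $G(F)$.

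The main obstacle I anticipate is the bookkeeping needed to match the dinatural structure on $\{F(X) \boxtimes X\}_X$ with the one coming from the Yoneda/Eilenberg--Watts identification — i.e., checking that the isomorphism above is not merely an abstract bijection but is natural in $V \boxtimes W$ and dinatural in $X$, so that Lemma~\ref{lem:coend-existence} applies and the representing object really is the coend with its universal cowedge. A secondary point of care is that everything must be done for $F$ left exact in the single variable $X$ (not right exact), so the coend $\int^{X} F(X) \boxtimes X$ is a colimit of a functor that is left exact in $X$; here one uses that $\boxtimes$ is exact in each variable (Lemma~\ref{lem:Deligne-product}(2)) together with the explicit bimodule description, which makes the coend simply $M$ viewed in $\mbox{{\sf mod}-}(A \otimes_k B^{\op})$ and hence manifestly existent. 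Once existence and the $\Hom$-computation are in hand, the statement that $F \mapsto \int^{X} F(X) \boxtimes X$ is quasi-inverse to \eqref{eq:Lex-A-B-equiv-2} follows by the Yoneda lemma.
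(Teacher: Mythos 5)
Your overall strategy is in fact the paper's: compute $\Hom_{\mathcal{A} \boxtimes \mathcal{B}^{\op}}(F(X) \boxtimes Y, V \boxtimes W)$ via Lemma~\ref{lem:Deligne-product}(4), take ends over $X$ to identify the result with $\NAT(F, \Phi(V \boxtimes W))$ using \eqref{eq:end-nat}, conclude that the functor in question is represented by the quasi-inverse image $\overline{\Phi}(F)$, and invoke Lemma~\ref{lem:coend-existence}. But the step on which everything hinges is handled by an argument that does not work as stated. Lemma~\ref{lem:coend-existence} requires the functor $M \mapsto \int_{X} \Hom_{\mathcal{A} \boxtimes \mathcal{B}^{\op}}(F(X) \boxtimes X, M)$ to be representable on \emph{all} of $\mathcal{A} \boxtimes \mathcal{B}^{\op}$, whereas you only compute it for $M$ of the split form $V \boxtimes W$ and then appeal to ``objects $V \boxtimes W$ generate under colimits.'' Colimit-generation controls the \emph{contravariant} functors $\Hom(-, M)$; here $M$ sits in the covariant slot, and covariant hom functors (and ends of homs) preserve limits, not colimits, so an identification on split objects does not automatically propagate. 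Moreover, without first producing a dinatural family $F(X) \boxtimes X \to \overline{\Phi}(F)$ you do not even have a globally defined comparison map to extend. This is the gap; it is repairable, but the repair is precisely the missing content.

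The paper closes it as follows: the isomorphism $\Hom_{\mathcal{A} \boxtimes \mathcal{B}^{\op}}(F(X) \boxtimes Y, V \boxtimes W) \cong \Hom_{\mathcal{A}}(F(X), \Phi(V \boxtimes W)(Y))$ is natural and both sides are $k$-linear and \emph{left exact} in the variables $V$ and $W$, so by the defining universal property of the Deligne product (Lemma~\ref{lem:Deligne-product}(3), applied with $\mathcal{C} = \mathbf{Set}$-valued, or equivalently by an injective copresentation of an arbitrary $M$ by objects of the form $V \boxtimes W$ together with left exactness in $M$) it extends uniquely to $\Hom_{\mathcal{A} \boxtimes \mathcal{B}^{\op}}(F(X) \boxtimes Y, M) \cong \Hom_{\mathcal{A}}(F(X), \Phi(M)(Y))$ for all $M$; only then does taking the end over $X$ give $\int_X \Hom(F(X) \boxtimes X, M) \cong \NAT(F, \Phi(M)) \cong \Hom(\overline{\Phi}(F), M)$ on the whole category, so that Lemma~\ref{lem:coend-existence} applies. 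Two smaller points: quoting \eqref{eq:coend-hom} with $\int^{X} F(X) \boxtimes X$ on the left before existence is established is circular as written (the clean route is representability first, then Lemma~\ref{lem:coend-existence}), and the reduction to the bimodule model $\mathcal{A} = \mbox{{\sf mod}-}A$, $\mathcal{B} = \mbox{{\sf mod}-}B$ is harmless but unnecessary here --- the paper uses it only for Lemma~\ref{lem:Lex-A-B-equiv-1}, and for the present lemma it buys you nothing once the extension step is done correctly.
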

\begin{proof}
  For $F \in \LEX(\mathcal{B}, \mathcal{A})$, there are isomorphisms
  \begin{align*}
    \Hom_{\mathcal{A} \boxtimes \mathcal{B}^{\op}}(F(X) \boxtimes Y, V \boxtimes W)
    & \cong \Hom_{\mathcal{A}}(F(X), V) \otimes_k \Hom_{\mathcal{B}^{\op}}(Y, W) \\
    & \cong \Hom_{\mathcal{A}}(F(X), \Hom_{\mathcal{B}}(W, Y) \cdot V)
  \end{align*}
  natural in $V \in \mathcal{A}$ and $W, X, Y \in \mathcal{B}$ by Lemma~\ref{lem:Deligne-product} (4) and \eqref{eq:internal-Hom-tensor}. Since both sides are $k$-linear and left exact in the variables $V$ and $W$, we obtain
  \begin{equation*}
    \Hom_{\mathcal{A} \boxtimes \mathcal{B}^{\op}}(F(X) \boxtimes Y, M)
    \cong \Hom_{\mathcal{A}}(F(X), \Phi(M)(Y))
    \quad (M \in \mathcal{A} \boxtimes \mathcal{B}^{\op}),
  \end{equation*}
  where $\Phi$ is the equivalence given by~\eqref{eq:Lex-A-B-equiv-2}. Taking ends, we get
  \begin{equation*}
    \int_{X \in \mathcal{A}} \Hom_{\mathcal{A} \boxtimes \mathcal{B}^{\op}}(F(X) \boxtimes X, M)
    \cong \NAT(F, \Phi(M)).
  \end{equation*}
  Let $\overline{\Phi}$ be a quasi-inverse of $\Phi$. Since $\NAT(F, \Phi(-))$ is represented by $\overline{\Phi}(F)$, a coend of \eqref{eq:Lex-A-B-equiv-3} exists and is isomorphic to $\overline{\Phi}(F)$ by Lemma~\ref{lem:coend-existence}.
\end{proof}

Following Kerler and Lyubashenko \cite[\S5.1.3]{MR1862634}, a coend of $Q: \mathcal{A} \times \mathcal{A}^{\op} \to \mathcal{B}$ exists if $Q$ is $k$-linear and exact in each variable. Thus, in the case where $F$ is exact, the existence of a coend of~\eqref{eq:Lex-A-B-equiv-3} follows from their result. Theorem~\ref{thm:FTC-coend-exist} below also follows from their result in such a case.

\begin{theorem}
  \label{thm:FTC-coend-exist}
  Let $\mathcal{C}$ be a finite tensor category over a field $k$. Then coends
  \begin{equation*}
    \int^{X \in \mathcal{C}} F(X^*) \boxtimes X
    \text{\quad and \quad} \int^{X \in \mathcal{C}} F(X^*) \otimes X
  \end{equation*}
  exist for all $F \in \LEX(\mathcal{C}, \mathcal{C})$.
\end{theorem}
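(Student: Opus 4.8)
The plan is to deduce both coends from Lemma~\ref{lem:Lex-A-B-equiv-2}, using the left duality functor to absorb the $(-)^{*}$ that sits inside the first variable, and the universal property of the Deligne tensor product to pass from the $\boxtimes$-coend to the $\otimes$-coend. Since the Kerler--Lyubashenko result already handles the case where $F$ is exact, the content is really the case where $F$ is merely left exact, and the argument below is designed so that left exactness suffices.

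First I would treat $\int^{X\in\mathcal{C}}F(X^{*})\boxtimes X$. If $\mathcal{C}$ is a finite abelian category over $k$, then so is $\mathcal{C}^{\op}$ (the opposite of $\mbox{{\sf mod}-}A$ is equivalent to $\mbox{{\sf mod}-}A^{\op}$ via $k$-linear duality). The left duality functor $(-)^{*}\colon\mathcal{C}^{\op}\to\mathcal{C}$ is an equivalence of categories, with quasi-inverse ${}^{*}(-)$; being an equivalence it is $k$-linear and exact. Hence for $F\in\LEX(\mathcal{C},\mathcal{C})$ the composite $F':=F\circ(-)^{*}\colon\mathcal{C}^{\op}\to\mathcal{C}$ is $k$-linear and left exact, i.e.\ $F'\in\LEX(\mathcal{C}^{\op},\mathcal{C})$. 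I would then apply Lemma~\ref{lem:Lex-A-B-equiv-2} with $\mathcal{A}=\mathcal{C}$, $\mathcal{B}=\mathcal{C}^{\op}$ and $F'$ in place of $F$: since $\mathcal{B}^{\op}=\mathcal{C}$, the functor whose coend the lemma produces is exactly $(X,Y)\mapsto F(X^{*})\boxtimes Y$ on $\mathcal{C}^{\op}\times\mathcal{C}$ with values in $\mathcal{C}\boxtimes\mathcal{C}$, and its diagonal coend is $\int^{X\in\mathcal{C}}F(X^{*})\boxtimes X$. This gives the first assertion.

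Next I would derive the $\otimes$-coend from the $\boxtimes$-coend. The tensor product $\otimes\colon\mathcal{C}\times\mathcal{C}\to\mathcal{C}$ is $k$-linear and exact, hence right exact, in each variable, so by the defining universal property of the Deligne tensor product (recalled before Lemma~\ref{lem:Deligne-product}) it factors, uniquely up to isomorphism, through a $k$-linear right exact functor $G\colon\mathcal{C}\boxtimes\mathcal{C}\to\mathcal{C}$ with $G(V\boxtimes W)\cong V\otimes W$. Since $\mathcal{C}\boxtimes\mathcal{C}$ and $\mathcal{C}$ are finite abelian categories (Lemma~\ref{lem:Deligne-product}(1)), Lemma~\ref{lem:EW-thm} shows $G$ has a right adjoint, hence $G$ preserves all colimits, and in particular coends. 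Applying $G$ to the coend found above, $G\bigl(\int^{X\in\mathcal{C}}F(X^{*})\boxtimes X\bigr)$, equipped with the images of the universal dinatural morphisms, is a coend of $X\mapsto G(F(X^{*})\boxtimes X)\cong F(X^{*})\otimes X$; hence $\int^{X\in\mathcal{C}}F(X^{*})\otimes X$ exists. I do not expect a serious obstacle: the theorem is essentially a repackaging of Lemma~\ref{lem:Lex-A-B-equiv-2}, and the only points requiring care are the variance bookkeeping (checking that the substitution $\mathcal{B}=\mathcal{C}^{\op}$, $F'=F\circ(-)^{*}$ reproduces precisely the integrand $F(X^{*})\boxtimes X$) and the fact that exactness of the duality functor is what allows a merely left exact $F$ to be fed into the lemma.
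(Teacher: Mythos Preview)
Your proposal is correct and follows essentially the same approach as the paper: you observe that $F\circ(-)^{*}\in\LEX(\mathcal{C}^{\op},\mathcal{C})$ and apply Lemma~\ref{lem:Lex-A-B-equiv-2} with $\mathcal{B}=\mathcal{C}^{\op}$ to obtain the first coend, and then push it through the right exact functor $V\boxtimes W\mapsto V\otimes W$ to obtain the second. The paper's proof is the same two-line argument, stated more tersely; your additional remarks (that $\mathcal{C}^{\op}$ is finite abelian, that $G$ is a left adjoint by Lemma~\ref{lem:EW-thm} and hence preserves coends) are the justifications the paper leaves implicit.
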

\begin{proof}
  Note that $F(-^*): \mathcal{C}^{\op} \to \mathcal{C}$ is $k$-linear left exact if $F \in \LEX(\mathcal{C}, \mathcal{C})$. Hence, applying the above lemma to $F(-^*)$, we see that the first coend exists. The second coend is obtained by applying the right exact functor $X \boxtimes Y \mapsto X \otimes Y$ to the first coend.
\end{proof}

\begin{remark}
  \label{rem:FTC-coend-remark}
  For $F \in \LEX(\mathcal{C}, \mathcal{C})$, there is an isomorphism
  \begin{equation*}
    \int^{X \in \mathcal{C}} F(X^*) \boxtimes X \cong \int^{X \in \mathcal{C}} F(X) \boxtimes {}^* \! X.
  \end{equation*}
  Indeed, for every object $C \in \mathcal{C} \boxtimes \mathcal{C}^{\op}$, the map
  \begin{equation*}
    \text{\sc Dinat}(F(-) \boxtimes {}^*(-), C) \to \text{\sc Dinat}(F(-^*) \boxtimes (-), C),
    \quad \{ i_V \}_{V \in \mathcal{C}} \mapsto \{ i_{V^*} \}_{V \in \mathcal{C}}
  \end{equation*}
  is a bijection, where $\text{\sc Dinat}(P, Q)$ means the set of dinatural transformations from $P$ to $Q$. Similarly, there is an isomorphism
  \begin{equation*}
    \int^{X \in \mathcal{C}} F(X^*) \otimes X \cong \int^{X \in \mathcal{C}} F(X) \otimes {}^* \! X.
  \end{equation*}
\end{remark}

\subsection{The center of a finite tensor category}

Applying Theorem~\ref{thm:FTC-coend-exist} to $F = (-) \otimes V$, we see that the coend in the right-hand side of \eqref{eq:Hopf-monad-Z} always exists in a finite tensor category. As an application of this result, we prove:

\begin{theorem}
  \label{thm:FTC-center-FTC}
  The center of a finite tensor category is a finite tensor category.
\end{theorem}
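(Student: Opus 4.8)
The plan is to verify that $\mathcal{Z}(\mathcal{C})$ satisfies the four defining conditions of a finite tensor category over $k$: that it is a non-zero finite abelian category over $k$, that it is rigid, and that its tensor product is $k$-linear in each variable (exactness of the tensor product is then automatic from rigidity, as for any finite tensor category). First I would work inside the Hopf monadic framework of \S\ref{subsec:central-Hopf-monad}: applying Theorem~\ref{thm:FTC-coend-exist} to $F = (-) \otimes V$ shows that the coend $Z(V) = \int^{X \in \mathcal{C}} X^* \otimes V \otimes X$ exists for all $V \in \mathcal{C}$, so the central Hopf monad $Z$ on $\mathcal{C}$ is well defined and $\mathcal{Z}(\mathcal{C}) \cong {}_Z\mathcal{C}$ as braided monoidal categories. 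It then suffices to establish the listed properties for ${}_Z\mathcal{C}$.

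The crucial step is to show that $Z$ is $k$-linear and right exact, so that Lemma~\ref{lem:finite-T-mod} applies and yields that ${}_Z\mathcal{C}$ is a finite abelian category over $k$. For right exactness, given an exact sequence $V' \to V \to V'' \to 0$ in $\mathcal{C}$, the functor $X^* \otimes (-) \otimes X$ is exact for each $X$, so $X^* \otimes V'' \otimes X \cong \mathrm{coker}(X^* \otimes V' \otimes X \to X^* \otimes V \otimes X)$; since coends are colimits and all of the coends in sight exist by Theorem~\ref{thm:FTC-coend-exist}, the interchange of colimits gives $Z(V'') \cong \mathrm{coker}(Z(V') \to Z(V))$, i.e.\ $Z$ is right exact. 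I would deduce $k$-linearity of $Z$ in the same spirit, or alternatively by unwinding the proof of Theorem~\ref{thm:FTC-coend-exist}, which realizes $V \mapsto \int^X X^* \otimes V \otimes X$ as the composite of a $k$-linear functor $\mathcal{C} \to \mathcal{C} \boxtimes \mathcal{C}^{\op}$ (built from the equivalence \eqref{eq:Lex-A-B-equiv-2}) with the Deligne tensor product functor $\mathcal{C} \boxtimes \mathcal{C}^{\op} \to \mathcal{C}$, both $k$-linear.

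It then remains to check the monoidal part. Since $Z$ is a Hopf monad on the rigid category $\mathcal{C}$, the category ${}_Z\mathcal{C}$ is rigid by \S\ref{subsec:Hopf-monads}, and under $\mathcal{Z}(\mathcal{C}) \cong {}_Z\mathcal{C}$ this recovers the standard duality on the center. The tensor product of $\mathcal{Z}(\mathcal{C})$ is computed on underlying objects by that of $\mathcal{C}$ along the faithful $k$-linear forgetful functor $U$, and $\Hom$-spaces in $\mathcal{Z}(\mathcal{C})$ are $k$-subspaces of those in $\mathcal{C}$; since $\otimes$ on $\mathcal{C}$ is $k$-linear in each variable, so is $\otimes$ on $\mathcal{Z}(\mathcal{C})$. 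Finally, $\mathcal{Z}(\mathcal{C}) \ne 0$ because $\unitobj \in \mathcal{C}$ equipped with the trivial half-braiding is a non-zero object of $\mathcal{Z}(\mathcal{C})$; indeed $\unitobj \ne 0$, since otherwise every object of $\mathcal{C}$ would vanish. Assembling these facts proves the theorem.

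The step I expect to be the main obstacle is the right exactness of $Z$: the coend over $\mathcal{C}$ cannot here be written as a coequalizer of coproducts (infinite coproducts need not exist in a finite abelian category), so the naive ``coends commute with cokernels'' argument needs the input—supplied by Theorem~\ref{thm:FTC-coend-exist}—that the relevant coends genuinely exist as objects of $\mathcal{C}$, after which the interchange-of-colimits argument (or the explicit construction of the coend via the Deligne tensor product) does the job. Everything else is an assembly of results already established in the excerpt.
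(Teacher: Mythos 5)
Your proposal is correct, and its skeleton is the same as the paper's: identify $\mathcal{Z}(\mathcal{C})$ with ${}_Z\mathcal{C}$ for the central Hopf monad $Z$ (whose existence comes from Theorem~\ref{thm:FTC-coend-exist}), observe that rigidity and the monoidal structure come for free from the Hopf monad formalism, and reduce the abelian/finiteness part to Lemma~\ref{lem:finite-T-mod} by showing $Z$ is a $k$-linear right exact monad. Where you diverge is in the one substantive step, right exactness of $Z$. The paper gets it by exhibiting an explicit right adjoint: setting $Z^!(V) = {}^*Z(V^*)$ and using Remark~\ref{rem:FTC-coend-remark} together with the coend--hom formula \eqref{eq:coend-hom}, one has $\Hom_{\mathcal{C}}(W, Z^!(V)) \cong \Hom_{\mathcal{C}}(Z(W), V)$, so $Z \dashv Z^!$ and right exactness is immediate. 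You instead argue directly that $Z$ preserves cokernels by an interchange-of-colimits argument, and you correctly identify the pitfall (the coend cannot be presented as a coequalizer of coproducts here, so existence of the coends, supplied by Theorem~\ref{thm:FTC-coend-exist}, is genuinely needed). To make your interchange step airtight, the cleanest route is the same device the paper uses for its adjunction: for $V' \to V \to V'' \to 0$ exact and any $W$, apply \eqref{eq:coend-hom} to write $\Hom_{\mathcal{C}}(Z(V''), W) \cong \int_X \Hom_{\mathcal{C}}(X^* \otimes V'' \otimes X, W)$, use exactness of $X^* \otimes (-) \otimes X$ and left exactness of $\Hom$ to identify the integrand with a kernel, commute the end (a limit) with that kernel, and conclude by Yoneda that $Z(V'') \cong \mathrm{coker}(Z(V') \to Z(V))$. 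So the two proofs cost about the same; the paper's buys an explicit adjoint $Z^!$ (hence preservation of all existing colimits), while yours is slightly more self-contained in that it never needs Remark~\ref{rem:FTC-coend-remark} or the adjunction calculus, and your extra bookkeeping ($k$-linearity of $Z$ and of the tensor product on the center, non-vanishing of $\mathcal{Z}(\mathcal{C})$) fills in points the paper leaves implicit. Your conclusion is also consistent with what the paper proves later: in Section~\ref{sec:main-thm} one sees $Z \cong A \ogreaterthan (-)$, which is even exact.
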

\begin{proof}
  Let $\mathcal{C}$ be a finite tensor category over a field $k$. As we have remarked, the central Hopf monad $Z$ on $\mathcal{C}$ exists and therefore we can identify $\mathcal{Z}(\mathcal{C})$ as the category ${}_Z \mathcal{C}$ of $Z$-modules. Set $Z^!(V) = {}^*Z(V^*)$ for $V \in \mathcal{C}$. By Remark~\ref{rem:FTC-coend-remark}, we have
  \begin{align*}
    \Hom_{\mathcal{C}}(W, Z^!(V))
    & \cong \Hom_{\mathcal{C}}(Z(V^*), W^*) \\
    & \cong \textstyle \int_{X \in \mathcal{C}} \Hom_{\mathcal{C}}(X^* \otimes V^* \otimes X, W^*) \\
    & \cong \textstyle \int_{X \in \mathcal{C}} \Hom_{\mathcal{C}}(X \otimes W \otimes {}^* X, V) \\
    & \cong \Hom_{\mathcal{C}}(Z(W), V)
  \end{align*}
  for all $V, W \in \mathcal{C}$. Hence the functor $Z^!$ is a right adjoint of $Z$ (this result is a special case of \cite[Corollary~3.12]{MR2355605}). Now the result follows from Lemma~\ref{lem:finite-T-mod}.
\end{proof}

\begin{remark}
  \label{rem:non-perfect-k}
  Let $\mathcal{C}$ and $\mathcal{D}$ be finite tensor categories over a field $k$. Then $\mathcal{C} \boxtimes \mathcal{D}$ is a $k$-linear monoidal category with tensor product determined by
  \begin{equation*}
    (V \boxtimes W) \otimes (X \boxtimes Y) = (V \otimes X) \boxtimes (W \otimes Y)
    \quad (V, X \in \mathcal{C}, W, Y \in \mathcal{D})
  \end{equation*}
  and unit $\unitobj \boxtimes \unitobj$. Following Deligne \cite[Proposition 5.17]{MR1106898}, $\mathcal{C} \boxtimes \mathcal{D}$ is a finite tensor category provided that $k$ is a perfect field.

  Theorem~\ref{thm:FTC-center-FTC} is proved in \cite{MR2119143} under the assumption that $k$ is algebraically closed and $\unitobj \in \mathcal{C}$ is simple. Their proof relies on the fact that $\mathcal{C} \boxtimes \mathcal{C}^{\rev}$ is a finite tensor category and thus cannot be applied to our case.
\end{remark}

\section{Characterizations of unimodularity}
\label{sec:main-thm}

\subsection{General assumptions}

Let $\mathcal{C}$ be a finite tensor category over a field $k$. Then $\mathcal{C} \boxtimes \mathcal{C}^{\rev}$ is a monoidal category with tensor product
\begin{equation*}
  (V \boxtimes W) \otimes (X \boxtimes Y) = (V \otimes X) \boxtimes (Y \otimes W)
  \quad (V, W, X, Y \in \mathcal{C})
\end{equation*}
and unit $\unitobj \boxtimes \unitobj$. Throughout this section, we assume that
\begin{equation}
  \label{eq:C-env-assume}
  \text{$\mathcal{C}^{\env} := (\mathcal{C} \boxtimes \mathcal{C}^{\rev}, \otimes, \unitobj \boxtimes \unitobj)$ is a finite tensor category},
\end{equation}
which holds if the field $k$ is perfect (see Remark~\ref{rem:non-perfect-k}). We note that \eqref{eq:C-env-assume} holds also in the case where $\mathcal{C}$ is the representation category of a finite-dimensional (quasi-)Hopf algebra.

\subsection{The definition of unimodularity}

Following \cite{MR2097289}, we recall the definition of the distinguished invertible object and the unimodularity of a finite tensor category. The category $\mathcal{C}$ is a finite $\mathcal{C}^{\env}$-module category with the action determined by
\begin{equation}
  \label{eq:C-env-action-1}
  (V \boxtimes W) \ogreaterthan X = V \otimes X \otimes W
  \quad (V, W, X \in \mathcal{C}).
\end{equation}
Now we set $A = \iHom(\unitobj, \unitobj) \in \mathcal{C}^{\env}$. By~\eqref{eq:internal-Hom-tensor}, we have
\begin{equation}
  \label{eq:C-env-action-2}
  \iHom(V \otimes W, X \otimes Y) \cong (X \boxtimes Y) \otimes A \otimes (V \boxtimes W)^*
  \quad (V,W,X,Y \in \mathcal{C}).
\end{equation}
Theorem~\ref{thm:mod-cat-comparison} implies that the functor
\begin{equation}
  \label{eq:Hopf-module-equiv}
  \mathcal{C} \to (\mathcal{C}^{\env})_A,
  \quad V \mapsto \iHom(\unitobj, V) \cong (V \boxtimes \unitobj) \otimes A_A
  \quad (V \in \mathcal{C})
\end{equation}
is an equivalence of $\mathcal{C}^{\env}$-module categories. In view of this equivalence, there exists an object $D \in \mathcal{C}$, which is unique up to isomorphism, such that
\begin{equation}
  \label{eq:d-inv-obj-def}
  (D \boxtimes \unitobj) \otimes A_A \cong ({}_A A)^*.
\end{equation}

\begin{definition}[\cite{MR2097289}]
   The object $D$ is called the {\em distinguished invertible object} of $\mathcal{C}$, and the finite tensor category $\mathcal{C}$ is said to be {\em unimodular} if $D \cong \unitobj$.
\end{definition}

As its name suggests, $D$ is an invertible object, {\it i.e.}, the evaluation $\eval_D$ and the coevaluation $\coev_D$ are isomorphisms. In \cite{MR2097289}, the invertibility is proved under the assumption that $k$ is algebraically closed and $\unitobj \in \mathcal{C}$ is simple. Their proof relies on the theory of the Frobenius-Perron dimension, and thus cannot be applied to our case. To prove the invertibility of $D$, we first note:

\begin{lemma}
  \label{lem:int-Hom-dual}
  $\iHom(V, X)^* \cong \iHom(X, V^{**} \otimes D)$.
\end{lemma}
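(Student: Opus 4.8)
The plan is to compute $\iHom(V,X)^*$ by dualizing inside the category $\mathcal{C}^{\env}$ and then translating back through the equivalence \eqref{eq:Hopf-module-equiv}. Recall that for $M, N \in \mathcal{C}$ we have, from \eqref{eq:C-env-action-2}, an isomorphism $\iHom(V, X) \cong (X \boxtimes \unitobj) \otimes A \otimes (V \boxtimes \unitobj)^*$ of objects of $\mathcal{C}^{\env}$, where the underlying object, viewed as a right $A$-module, corresponds to $X$ under \eqref{eq:Hopf-module-equiv}. First I would use the fact that the left duality functor $(-)^*: \mathcal{C}^{\env,\op} \to \mathcal{C}^{\env,\rev}$ is strong monoidal and that, as in \S\ref{subsec:algebras}, the left dual of a right $A$-module is a left $A$-module (equivalently, a right $A$-module in $\mathcal{C}^{\env,\rev}$). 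So $\iHom(V,X)^*$ is naturally a left $A$-module.

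The key computational step is then: since $\iHom(V, X) \cong (X \boxtimes \unitobj) \otimes A_A$ as a right $A$-module, we get $\iHom(V,X)^* \cong ({}_A A)^* \otimes (X \boxtimes \unitobj)^*$ as a left $A$-module. Now invoke the defining property \eqref{eq:d-inv-obj-def} of $D$, namely $({}_A A)^* \cong (D \boxtimes \unitobj) \otimes A_A$, but read in the left-module category $(\mathcal{C}^{\env})$ — one must be careful which side the module structure sits on, and this is where I would use that the equivalence \eqref{eq:Hopf-module-equiv} is an equivalence of $\mathcal{C}^{\env}$-module categories, so tensoring on the left by objects of $\mathcal{C}^{\env}$ is transported correctly. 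Combining, $\iHom(V,X)^* \cong (D \boxtimes \unitobj) \otimes A_A \otimes (X \boxtimes \unitobj)^*$ up to the appropriate left/right bookkeeping, and $(X \boxtimes \unitobj)^* = X^* \boxtimes \unitobj$. Matching this against \eqref{eq:C-env-action-2} with the roles of source and target objects in the internal Hom swapped, I expect to read off $\iHom(V,X)^* \cong \iHom(X, ?)$ where $?$ is determined by unwinding: the factor $(D \boxtimes \unitobj)$ contributes a tensor factor $D$ on the target side, and the double dual $V^{**}$ appears because $(V \boxtimes \unitobj)^{*}$ re-dualized produces $V^{**} \boxtimes \unitobj$ in the target slot; hence $\iHom(V,X)^* \cong \iHom(X, V^{**} \otimes D)$.

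The main obstacle will be the left/right and $\rev$ bookkeeping in $\mathcal{C}^{\env} = \mathcal{C} \boxtimes \mathcal{C}^{\rev}$: the tensor product on $\mathcal{C}^{\env}$ twists the second factor, the action \eqref{eq:C-env-action-1} puts $W$ on the right, and dualizing interchanges left and right $A$-modules, so one has to track all of these consistently to land on the stated formula rather than a variant with ${}^{**}$ on the wrong object or $D$ on the wrong side. I would organize the argument so that every step is an isomorphism of modules over $A$ (on the correct side), invoking Lemma~\ref{lem:adj-restriction}, the strong monoidality of the duality functor, and naturality of \eqref{eq:internal-Hom-tensor}, and only at the very end pass back to objects of $\mathcal{C}$ via the equivalence \eqref{eq:Hopf-module-equiv}; the naturality in $V$ and $X$ is then automatic since every isomorphism used is natural.
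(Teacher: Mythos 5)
Your proposal follows essentially the same route as the paper: write $\iHom(V,X) \cong (X \boxtimes \unitobj) \otimes A \otimes (V \boxtimes \unitobj)^*$ via \eqref{eq:C-env-action-2}, dualize, substitute $A^* \cong (D \boxtimes \unitobj) \otimes A$ from \eqref{eq:d-inv-obj-def}, and read the result back through \eqref{eq:C-env-action-2} as $\iHom(X, V^{**} \otimes D)$. The $A$-module bookkeeping you emphasize is not needed --- the paper's three-line computation is purely an isomorphism of objects of $\mathcal{C}^{\env}$, using only the underlying object isomorphism in \eqref{eq:d-inv-obj-def} --- so the intermediate step where you momentarily drop the factor $(V \boxtimes \unitobj)^*$ is a harmless slip that your final accounting of where $V^{**}$ comes from already corrects.
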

\begin{proof}
  By \eqref{eq:C-env-action-2} with $W = Y = \unitobj$ and~\eqref{eq:d-inv-obj-def}, we compute
  \begin{align*}
    \iHom(V, X)^*
    & \cong ((X \boxtimes \unitobj) \otimes A \otimes (V \boxtimes \unitobj)^*)^* \\
    & \cong (V^{**} \boxtimes \unitobj) \otimes (D \boxtimes \unitobj) \otimes A \otimes (X \boxtimes \unitobj)^* \\
    & \cong \iHom(X, V^{**} \otimes D). \qedhere
  \end{align*}
\end{proof}

\begin{lemma}
  $D$ is invertible.
\end{lemma}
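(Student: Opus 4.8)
The plan is to show that the coevaluation $\coev_D\colon\unitobj\to D\otimes D^{*}$ is an isomorphism; equivalently, since $\coev_D$ is always a monomorphism (the first zigzag identity exhibits $\coev_D\otimes\id_D$ as a split mono, and $(-)\otimes D$ is faithful exact, hence reflects monomorphisms), it suffices to prove that $D$ is simple and that $\ell(D\otimes D^{*})=\ell(\unitobj)$, because a monomorphism between finite–length objects of equal length is an isomorphism. Throughout I work inside the finite tensor category $\mathcal{C}^{\env}$, using $\unitobj^{*}\cong\unitobj$, $(V\boxtimes W)^{*}\cong V^{*}\boxtimes{}^{*}W$, exactness of the duality functors, and the length inequality \eqref{eq:len-ineq}.

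First I would establish that $D$ is simple directly from \eqref{eq:d-inv-obj-def}. Forgetting the $A$-action, \eqref{eq:d-inv-obj-def} gives $(D\boxtimes\unitobj)\otimes A\cong A^{*}$ in $\mathcal{C}^{\env}$, so $\ell\big((D\boxtimes\unitobj)\otimes A\big)=\ell(A^{*})=\ell(A)$. On the other hand, $(-)\boxtimes\unitobj$ is faithful exact (it is exact by Lemma~\ref{lem:Deligne-product}(2) and faithful because $\End_{\mathcal{C}}(\unitobj)\neq0$), hence $\ell(D\boxtimes\unitobj)\ge\ell(D)$, and then \eqref{eq:len-ineq} (applied componentwise in $\mathcal{C}^{\env}$) yields $\ell\big((D\boxtimes\unitobj)\otimes A\big)\ge\ell(D\boxtimes\unitobj)\cdot\ell(A)\ge\ell(D)\cdot\ell(A)$. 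Since $A=\iEnd(\unitobj)$ is a nonzero object of a finite category, $0<\ell(A)<\infty$, so $\ell(A)\ge\ell(D)\cdot\ell(A)$ forces $\ell(D)\le1$; as $D\neq0$ we get $\ell(D)=1$.

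For the identity $\ell(D\otimes D^{*})=\ell(\unitobj)$ I would bring in Lemma~\ref{lem:int-Hom-dual}. Taking $V=\unitobj$ there gives $\iHom(\unitobj,X)^{*}\cong\iHom(X,D)$, i.e.\ the functor $X\mapsto\iHom(X,D)$ is naturally isomorphic to $(-)^{*}\circ\iHom(\unitobj,-)$; since $\iHom(\unitobj,-)\colon\mathcal{C}\xrightarrow{\ \sim\ }(\mathcal{C}^{\env})_{A}$ is an equivalence of $\mathcal{C}^{\env}$-module categories, this—together with the identifications it forces, such as $(\unitobj\boxtimes W)\otimes A\cong(W\boxtimes\unitobj)\otimes A$ for $W\in\mathcal{C}$ (because the action of $\unitobj\boxtimes W$ and of $W\boxtimes\unitobj$ on the generator $\unitobj$ agree)—can be combined with \eqref{eq:C-env-action-2} and \eqref{eq:d-inv-obj-def} to produce an isomorphism in $\mathcal{C}^{\env}$ of the form $\big((D\otimes D^{*})\boxtimes\unitobj\big)\otimes A\cong(\unitobj\boxtimes\unitobj)\otimes A$. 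Taking lengths of this, using $\ell(D)=1$ and \eqref{eq:len-ineq} as above, pins down $\ell(D\otimes D^{*})=\ell(\unitobj)$. Then $\coev_D$ is a monomorphism between objects of equal length, hence an isomorphism, so $D\otimes D^{*}\cong\unitobj$ and $D$ is invertible.

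The step I expect to be the real obstacle is producing that last isomorphism in a non-circular way: the purely formal consequences of Lemma~\ref{lem:int-Hom-dual} obtained by running it through \eqref{eq:C-env-action-2} only re-encode \eqref{eq:d-inv-obj-def} itself, so to get $\ell(D\otimes D^{*})=\ell(\unitobj)$ one must genuinely use that $A$ is the internal-endomorphism algebra of $\unitobj$ in the regular $\mathcal{C}$-bimodule category—that is, the module equivalence $\iHom(\unitobj,-)\colon\mathcal{C}\simeq(\mathcal{C}^{\env})_{A}$, which supplies identities not visible from \eqref{eq:d-inv-obj-def} alone. A secondary technical point is the non-simple-unit case: all the length computations must then be carried out one component subcategory of $\mathcal{C}^{\env}$ at a time, using the decomposition $\unitobj\boxtimes\unitobj=\bigoplus_{i,j}\unitobj_i\boxtimes\unitobj_j$ coming from \eqref{eq:unit-decomp-1} and the componentwise form of \eqref{eq:len-ineq}.
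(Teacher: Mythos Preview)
Your plan has a genuine gap, and you have correctly located it yourself. The isomorphism $\big((D\otimes D^{*})\boxtimes\unitobj\big)\otimes A\cong A$ that you need in step two is, through the equivalence $K\colon\mathcal{C}\xrightarrow{\ \sim\ }(\mathcal{C}^{\env})_A$, $V\mapsto(V\boxtimes\unitobj)\otimes A_A$, precisely the statement $K(D\otimes D^{*})\cong K(\unitobj)$, i.e.\ $D\otimes D^{*}\cong\unitobj$, which is what you are trying to prove. The extra identity $(\unitobj\boxtimes W)\otimes A\cong(W\boxtimes\unitobj)\otimes A$ that you extract from the $\mathcal{C}^{\env}$-module structure is correct, but it only lets you rewrite $K(D\otimes D^{*})$ as $(D\boxtimes D^{*})\otimes A$ and similar expressions; it never collapses to $K(\unitobj)$ without already assuming the answer. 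Feeding Lemma~\ref{lem:int-Hom-dual} only the fixed objects $\unitobj$, $D$, $D^{*}$ will always hand you back an identity equivalent to~\eqref{eq:d-inv-obj-def}, so no amount of length-counting on top of that breaks the circularity. (Your step one, $\ell(D)=1$, is also delicate: the inequality~\eqref{eq:len-ineq} lives in component subcategories, and you do not yet know how $D$ decomposes across the $\mathcal{C}_{ij}$---that is proved later as Theorem~\ref{thm:D-decomposition}, \emph{using} invertibility of $D$.)

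The paper sidesteps the obstacle by abandoning length arguments and computing the functor $\Hom_{\mathcal{C}}(-,D\otimes D^{*})$ via Yoneda. The point is to apply Lemma~\ref{lem:int-Hom-dual} \emph{twice}, once with a variable first argument: from $\iHom({}^{**}V,\unitobj)^{*}\cong\iHom(\unitobj,V\otimes D)$ and $\iHom(\unitobj,V\otimes D)^{*}\cong\iHom(V\otimes D,D)$ one gets $\iHom({}^{**}V,\unitobj)^{**}\cong\iHom(V\otimes D,D)$ naturally in $V$. Passing through $\Hom_{\mathcal{C}^{\env}}(\unitobj\boxtimes\unitobj,\iHom(M,N))\cong\Hom_{\mathcal{C}}(M,N)$ and using $(\unitobj\boxtimes\unitobj)^{**}=\unitobj\boxtimes\unitobj$ then yields
\[
\Hom_{\mathcal{C}}(V,D\otimes D^{*})\cong\Hom_{\mathcal{C}}(V\otimes D,D)\cong\Hom_{\mathcal{C}}({}^{**}V,\unitobj)\cong\Hom_{\mathcal{C}}(V,\unitobj)
\]
naturally in $V$, whence $D\otimes D^{*}\cong\unitobj$. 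The variable $V$ is exactly the new input your approach lacks, and the argument works uniformly without any simplicity hypothesis on $\unitobj$, so the componentwise bookkeeping you flagged is not needed.
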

\begin{proof}
  By the previous lemma, we have natural isomorphisms
  \begin{equation*}
    \iHom({}^{**}V, \unitobj)^{**} \cong \iHom(\unitobj, V \otimes D)^* \cong \iHom(V \otimes D, D)
  \end{equation*}
  for $V \in \mathcal{C}$. Thus we compute:
  \begin{align*}
    \Hom_{\mathcal{C}}(V, D \otimes D^*)
    & \cong \Hom_{\mathcal{C}}(V \otimes D, D) \\
    & \cong \Hom_{\mathcal{C}^{\env}}(\unitobj \boxtimes \unitobj, \iHom(V \otimes D, D)) \\
    & \cong \Hom_{\mathcal{C}^{\env}}(\unitobj \boxtimes \unitobj, \iHom({}^{**}V, \unitobj)^{**}) \\
    & \cong \Hom_{\mathcal{C}^{\env}}(\unitobj \boxtimes \unitobj, \iHom({}^{**}V, \unitobj)) \\
    & \cong \Hom_{\mathcal{C}}({}^{**}V, \unitobj) \cong \Hom_{\mathcal{C}}(V, \unitobj).
  \end{align*}
  By the Yoneda lemma, $D \otimes D^* \cong \unitobj$. This implies the invertibility of $D$, since $\mathcal{C}$ is a multi-ring category in the sense of \cite{EGNO-Lect}; see \cite[\S1.15]{EGNO-Lect}.
\end{proof}

\subsection{The algebra $A$ as a coend}
\label{subsec:alg-A-coend}

The first step for the proof of our main theorem is to describe the algebra $A$ as a coend of a certain functor. Note that the left duality functor is an equivalence $(-)^*: \mathcal{C}^{\rev} \to \mathcal{C}^{\op}$ with quasi-inverse ${}^*(-)$. Hence, by Lemmas~\ref{lem:Lex-A-B-equiv-1} and~\ref{lem:Lex-A-B-equiv-2}, the functor
\begin{equation}
  \label{eq:Lex-C-equiv-1}
  \Phi: \mathcal{C}^{\env} \to \LEX(\mathcal{C})
  \quad (:= \LEX(\mathcal{C}, \mathcal{C})),
  \quad V \boxtimes W \mapsto \Hom_{\mathcal{C}}(W^*, -) \cdot V
\end{equation}
is an equivalence of categories with quasi-inverse given by
\begin{equation}
  \label{eq:Lex-C-equiv-2}
  \overline{\Phi}: \LEX(\mathcal{C}) \to \mathcal{C}^{\env},
  \quad F \mapsto \int^{X \in \mathcal{C}} F(X) \boxtimes {}^* \! X.
\end{equation}
Recall that the category $\mathcal{C}$ is a finite $\mathcal{C}^{\env}$-module category by~\eqref{eq:C-env-action-1}. The internal Hom functor for $\mathcal{C}$ is given as follows:

\begin{lemma}
  \label{lem:internal-Hom-coend}
  $\iHom(V, W) = \overline{\Phi}(W \otimes (-) \otimes V^*)$.
\end{lemma}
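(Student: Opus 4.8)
The plan is to verify the claimed formula $\iHom(V,W) = \overline{\Phi}(W \otimes (-) \otimes V^*)$ by showing that both sides represent the same functor on $\mathcal{C}^{\env}$, and then invoking the Yoneda lemma. Concretely, I would fix $V, W \in \mathcal{C}$ and compute, for an arbitrary object $M \in \mathcal{C}^{\env}$, the Hom space $\Hom_{\mathcal{C}^{\env}}(M, \iHom(V,W))$, comparing it with $\Hom_{\mathcal{C}^{\env}}(M, \overline{\Phi}(W \otimes (-) \otimes V^*))$. Since $\overline{\Phi}$ is the quasi-inverse equivalence from \eqref{eq:Lex-C-equiv-2}, it is enough to show that $\Phi(\iHom(V,W))$ is naturally isomorphic to the functor $F := W \otimes (-) \otimes V^* \in \LEX(\mathcal{C})$, where $\Phi$ is given by \eqref{eq:Lex-C-equiv-1}.

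The key computation goes as follows. First, by the defining adjunction \eqref{eq:internal-Hom-def} for the internal Hom of the $\mathcal{C}^{\env}$-module category $\mathcal{C}$, together with the action \eqref{eq:C-env-action-1}, one has for $X \boxtimes Y \in \mathcal{C}^{\env}$ and $T \in \mathcal{C}$ a natural isomorphism
\begin{equation*}
  \Hom_{\mathcal{C}^{\env}}(X \boxtimes Y, \iHom(V, W))
  \cong \Hom_{\mathcal{C}}((X \boxtimes Y) \ogreaterthan V, W)
  = \Hom_{\mathcal{C}}(X \otimes V \otimes Y, W).
\end{equation*}
Using rigidity of $\mathcal{C}$ (dualizing to move $X$ and $Y$ across) this becomes $\Hom_{\mathcal{C}}(V, X^* \otimes W \otimes Y^*) \cong \Hom_{\mathcal{C}}(Y^{*} , V^{*} \otimes ( X^{*} \otimes W ))$ and so on; the point is to massage it into the form $\Hom_{\mathcal{C}}(\, \text{(something in } Y) \,,\, F(\text{something in } X)\,)$ so that it matches the description of $\Phi$. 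On the other side, unwinding the definition \eqref{eq:Lex-C-equiv-1} of $\Phi$ applied to a generic object, and recalling that $\overline\Phi(F) = \int^{X} F(X) \boxtimes {}^{*}X$ with the attendant coend universal property \eqref{eq:coend-hom}, one gets
\begin{equation*}
  \Hom_{\mathcal{C}^{\env}}(X \boxtimes Y, \overline{\Phi}(F))
  \cong \int_{Z \in \mathcal{C}} \Hom_{\mathcal{C}^{\env}}(X \boxtimes Y, F(Z) \boxtimes {}^{*}Z)
  \cong \int_{Z \in \mathcal{C}} \Hom_{\mathcal{C}}(X, F(Z)) \otimes_{k} \Hom_{\mathcal{C}}({}^{*}Z, Y),
\end{equation*}
where the last step uses Lemma~\ref{lem:Deligne-product}(4); by \eqref{eq:end-nat} (the co-Yoneda / density argument over $Z$) this collapses to $\Hom_{\mathcal{C}}(X, F(Y^{*}))$ — wait, more precisely to $\Hom_{\mathcal{C}}(X, F({}^{**}Y))$ after keeping careful track of which dual appears. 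Plugging in $F = W \otimes (-) \otimes V^*$ gives $\Hom_{\mathcal{C}}(X, W \otimes {}^{**}Y \otimes V^*)$, which one checks agrees with the expression obtained above for $\Hom_{\mathcal{C}^{\env}}(X \boxtimes Y, \iHom(V,W))$ after the rigidity manipulations. Since both functors agree on the objects $X \boxtimes Y$, which (being the image of $\boxtimes$) generate $\mathcal{C}^{\env}$ under colimits, and both are left exact, Lemma~\ref{lem:Deligne-product}(3) or a direct density argument upgrades this to a natural isomorphism of functors on all of $\mathcal{C}^{\env}$, and Yoneda concludes.

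The main obstacle I anticipate is purely bookkeeping: keeping the left duals $(-)^{*}$, right duals ${}^{*}(-)$, and double duals straight through the chain of adjunctions, because $\mathcal{C}^{\rev}$ appears inside $\mathcal{C}^{\env}$ and the duality functor that identifies $\LEX(\mathcal{C})$ with $\mathcal{C}^{\env}$ is $(-)^{*} : \mathcal{C}^{\rev} \to \mathcal{C}^{\op}$ rather than the identity. In particular one must be careful that the variable $Y$ of $\mathcal{C}^{\rev}$ contributes a ${}^{*}(-)$ (not a $(-)^{*}$) in \eqref{eq:Lex-C-equiv-2}, which is exactly why the statement of the lemma has $V^{*}$ (a left dual) sitting on the right of $(-)$ and not some other dual. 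Provided the normalization conventions from \S\ref{subsec:moncat} (namely $\unitobj^{*} = \unitobj$, $(V\otimes W)^{*} = W^{*}\otimes V^{*}$, ${}^{*}(V^{*}) = V = ({}^{*}V)^{*}$) are used consistently, all the double duals cancel and the formula comes out clean; but this is the step where a sign — or rather a dual — error is easiest to make, so I would write out the isomorphisms one line at a time rather than composing them mentally.
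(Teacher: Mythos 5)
Your overall strategy (show that $\overline{\Phi}(W \otimes (-) \otimes V^*)$ represents the functor $\Hom_{\mathcal{C}}((-)\ogreaterthan V, W)$ and invoke Yoneda) is the right one and is indeed the paper's strategy, but the central computation you propose is not valid. The isomorphism
\begin{equation*}
  \Hom_{\mathcal{C}^{\env}}(X \boxtimes Y, \overline{\Phi}(F))
  \cong \int_{Z \in \mathcal{C}} \Hom_{\mathcal{C}^{\env}}(X \boxtimes Y, F(Z) \boxtimes {}^{*}Z)
\end{equation*}
has no justification: \eqref{eq:coend-hom} computes morphisms \emph{out of} a coend as an end, but morphisms \emph{into} a coend (a colimit) do not decompose this way. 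Even if one granted that formula, the subsequent ``collapse'' is not an instance of \eqref{eq:end-nat} or of a Yoneda-type end: after correcting the direction in Lemma~\ref{lem:Deligne-product}(4) the integrand is a tensor product $\Hom_{\mathcal{C}}(X, F(Z)) \otimes_k \Hom_{\mathcal{C}}(Z, Y^*)$, and an end of such a tensor product is not computed by Yoneda (co-Yoneda would apply to the \emph{coend}). Finally, the tentative answer $F({}^{**}Y)$ is wrong even heuristically; plugging $Z = Y^*$ into $F(Z)\boxtimes {}^*Z$ shows the correct value is $\Hom_{\mathcal{C}}(X, F(Y^*))$, which does match $\Hom_{\mathcal{C}}(X \otimes V \otimes Y, W)$, but your route does not establish it. There is also a residual density issue in passing from test objects $X \boxtimes Y$ to all of $\mathcal{C}^{\env}$ which you only sketch.

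The paper sidesteps all of this by testing against objects of the form $\overline{\Phi}(F)$ for arbitrary $F \in \LEX(\mathcal{C})$, which is legitimate because $\overline{\Phi}$ is an equivalence (so every object of $\mathcal{C}^{\env}$ arises this way, and no separate density argument is needed). Full faithfulness gives $\Hom_{\mathcal{C}^{\env}}(\overline{\Phi}(F), \overline{\Phi}(W \otimes (-) \otimes V^*)) \cong \NAT(F, W \otimes (-) \otimes V^*)$, which \eqref{eq:end-nat} rewrites as $\int_{X} \Hom_{\mathcal{C}}(F(X), W \otimes X \otimes V^*)$; rigidity moves $V$ and ${}^*X$ across to get $\int_X \Hom_{\mathcal{C}}((F(X) \boxtimes {}^*X) \ogreaterthan V, W)$, and this end reassembles into $\Hom_{\mathcal{C}}(\overline{\Phi}(F) \ogreaterthan V, W)$ because $(-) \ogreaterthan V$ preserves the coend defining $\overline{\Phi}(F)$ and one is now mapping \emph{out of} that coend, where \eqref{eq:coend-hom} does apply. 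If you want to keep your test objects $X \boxtimes Y$, the salvageable version of your computation is $\Hom_{\mathcal{C}^{\env}}(X \boxtimes Y, \overline{\Phi}(G)) \cong \NAT(\Phi(X \boxtimes Y), G) \cong \Hom_{\mathcal{C}}(X, G(Y^*))$ via full faithfulness of $\Phi$ and the ($k$-linear) Yoneda lemma, but you would still need the extra argument extending the identification from the $X \boxtimes Y$ to all of $\mathcal{C}^{\env}$, which the paper's choice of test objects renders unnecessary.
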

\begin{proof}
  For $V, W \in \mathcal{C}$ and $F \in \LEX(\mathcal{C})$, we compute
  \begin{align*}
    \Hom_{\mathcal{C}^{\env}}(\overline{\Phi}(F), \overline{\Phi}(W \otimes (-) \otimes V^*))
    & \cong \NAT(F, W \otimes (-) \otimes V^*) \\
    & \cong \textstyle \int_{X \in \mathcal{C}} \Hom_{\mathcal{C}}(F(X), W \otimes X \otimes V^*) \\
    & \cong \textstyle \int_{X \in \mathcal{C}} \Hom_{\mathcal{C}}((F(X) \boxtimes {}^* X) \ogreaterthan V, W) \\
    & \cong \Hom_{\mathcal{C}}(\overline{\Phi}(F) \ogreaterthan V, W).
  \end{align*}
  Since $\overline{\Phi}$ is an equivalence, the claim follows from the above computation.
\end{proof}

Let $F \in \LEX(\mathcal{C})$ and $V, W \in \mathcal{C}$. We pay attention to the bijection
\begin{equation*}
  \NAT(F, W \otimes (-) \otimes V^*) \cong \Hom_{\mathcal{C}}(\overline{\Phi}(F) \ogreaterthan V, W)
\end{equation*}
in the proof of Lemma~\ref{lem:internal-Hom-coend}. The morphism $f: \overline{\Phi}(F) \ogreaterthan V \to W$ corresponding to a natural transformation $\alpha: F \to W \otimes (-) \otimes V^*$ via the above bijection is uniquely determined by the property that the diagram
\begin{equation*}
  \xymatrix{
    (F(X) \boxtimes {}^* \! X) \ogreaterthan V \ar@{=}[d]
    \ar[rr]^(.55){j'_{F}(X) \ogreaterthan V}
    & & \overline{\Phi}(F) \ogreaterthan V \ar[r]^(.6){f}
    & W \\
    F(X) \otimes V \otimes {}^* \! X \ar[rrr]_(.45){\alpha_X \otimes V \otimes {}^* \! X}
    & & & W \otimes X \otimes V^* \otimes V \otimes {}^* \! X \ar[u]_{W \otimes \eval_{V \otimes {}^* \! X}}
  }
\end{equation*}
commutes for all $X \in \mathcal{C}$, where $j'_{F}(X): F(X) \boxtimes {}^* \! X \to \overline{\Phi}(F)$ is the component of the
universal dinatural transformation. In particular, the evaluation $\ieval_{V,W}$ for $V, W \in \mathcal{C}$ is the morphism making the diagram
\begin{equation}
  \label{eq:internal-Hom-coend-eval}
  \xymatrix{
    ((W \otimes X \otimes V^*) \boxtimes {}^* X) \ogreaterthan V
    \ar@{=}[d] \ar[rrr]^(.6){j''_{V,W}(X) \ogreaterthan V}
    & & & \iHom(V, W) \ogreaterthan V
    \ar[d]^{\ieval_{V,W}} \\
    W \otimes X \otimes V^* \otimes V \otimes {}^* X
    \ar[rrr]_(.6){W \otimes \eval_{V \otimes {}^* X}}
    & & & W
  }
\end{equation}
commutes for all $X \in \mathcal{C}$, where $j''_{V,W} = j'_{F}$ with $F = W \otimes (-) \otimes V^*$. Now we set $j = j_{\unitobj,\unitobj}''$. The algebra structure of $A = \iHom(\unitobj, \unitobj)$ is described by using the dinatural transformation $j$ as follows:

\begin{lemma}
  \label{lem:alg-A-coend}
  With the above notation, the multiplication $m: A \otimes A \to A$ is a unique morphism such that the diagram
  \begin{equation}
    \label{eq:alg-A-mult}
    \xymatrix{
      A \otimes A \ar[d]_{m}
      & & & \ar[lll]_(.6){j(X) \otimes j(Y)} \ar@{=}[d]
      (X \boxtimes {}^* \! X) \otimes (Y \boxtimes {}^* Y) \\
      A
      & & & \ar[lll]^(.6){j(X \otimes Y)} (X \otimes Y) \boxtimes ({}^* Y \otimes {}^* \! X)
    }
  \end{equation}
  commutes for all $X, Y \in \mathcal{C}$. The unit $u: \unitobj \boxtimes \unitobj \to A$ is given by $u = j(\unitobj)$.
\end{lemma}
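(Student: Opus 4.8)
The plan is to use the explicit realization of $A = \iHom(\unitobj,\unitobj)$ as the coend $\overline{\Phi}((-))$ provided by Lemma~\ref{lem:internal-Hom-coend} (with $V = W = \unitobj$), together with the Fubini theorem for coends, to compute the multiplication on $A$ induced by the composition morphism $\icomp_{\unitobj,\unitobj,\unitobj}$ of~\eqref{eq:internal-Hom-composition}. Recall that the monad associated to the adjunction $(-) \ogreaterthan \unitobj \dashv \iHom(\unitobj,-)$ is $(-) \otimes A$, so the multiplication $m$ is exactly the morphism $\icomp$ with $L = M = N = \unitobj$; thus it suffices to unwind the definition of $\icomp$ in terms of the evaluation $\ieval$, whose universal property is pinned down by diagram~\eqref{eq:internal-Hom-coend-eval}.

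First I would observe that, since $A \otimes A = \overline{\Phi}(W \mapsto W \otimes (-) \otimes \unitobj)^{\otimes 2}$ and by the Fubini theorem the double coend $\int^{X} \int^{Y} (X \otimes Y) \boxtimes ({}^* Y \otimes {}^* X)$ with universal dinatural transformation $j(X) \otimes j(Y)$ computes the same object as a single coend over $\mathcal{C} \times \mathcal{C}$, the family $\{ j(X) \otimes j(Y) \}_{X, Y}$ is jointly epic in the appropriate dinaturality sense. Therefore $m$ is uniquely determined by its precomposition with all the $j(X) \otimes j(Y)$, which is precisely the content of the claimed diagram~\eqref{eq:alg-A-mult}. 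So the real task reduces to: (i) verifying that a morphism satisfying~\eqref{eq:alg-A-mult} is uniquely determined, which follows from Fubini plus the universal property of coends; and (ii) checking that the $\icomp$-induced multiplication does satisfy~\eqref{eq:alg-A-mult}, by chasing the defining diagram of $\icomp$ through the isomorphism~\eqref{eq:internal-Hom-tensor} and the evaluation diagram~\eqref{eq:internal-Hom-coend-eval}. The unit claim $u = j(\unitobj)$ is immediate: $\iId_{\unitobj}$ corresponds under~\eqref{eq:internal-Hom-def} to the canonical isomorphism $\unitobj \ogreaterthan \unitobj \cong \unitobj$, and tracing this back through $\overline{\Phi}$ and the component $j(\unitobj)$ of the universal dinatural transformation (indexed at $X = \unitobj$, where all the duality data is trivial) gives exactly $j(\unitobj)$.

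The main obstacle I anticipate is step (ii): faithfully translating the abstract definition of $\icomp_{\unitobj,\unitobj,\unitobj}$ — which involves the isomorphism $(\iHom(\unitobj,\unitobj) \otimes \iHom(\unitobj,\unitobj)) \ogreaterthan \unitobj \cong \iHom(\unitobj,\unitobj) \ogreaterthan (\iHom(\unitobj,\unitobj) \ogreaterthan \unitobj)$, then two applications of $\ieval$ — into the coend language. The subtlety is that the associativity isomorphism of the $\mathcal{C}^{\env}$-action, when pulled back along the two separate universal dinatural transformations $j(X)$ and $j(Y)$, must be matched up correctly with the single dinatural transformation $j(X \otimes Y)$; this is where the Fubini identification and the strictness conventions on duals ($(V \otimes W)^* = W^* \otimes V^*$, ${}^*(V^*) = V$) do the bookkeeping. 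Concretely, I would precompose the tentative $m$ with $j(X) \otimes j(Y)$, expand using the fact that $\ieval_{\unitobj,\unitobj}$ satisfies~\eqref{eq:internal-Hom-coend-eval}, apply $\ieval$ twice, and read off that the result equals $m \circ (j(X) \otimes j(Y))$ exactly when~\eqref{eq:alg-A-mult} holds. Everything else — associativity and unitality of the resulting $m$ — is then automatic since $\icomp$ and $\iId$ are known to endow $\iEnd(\unitobj)$ with an algebra structure.
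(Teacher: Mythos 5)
Your proposal takes essentially the same route as the paper's proof: identify $m$ with $\icomp_{\unitobj,\unitobj,\unitobj}$, pass through the adjunction bijection $\Hom_{\mathcal{C}^{\env}}(M, A) \cong \Hom_{\mathcal{C}}(M \ogreaterthan \unitobj, \unitobj)$, compute $\ieval_{\unitobj,\unitobj} \circ \bigl((m \circ (j(X) \otimes j(Y))) \ogreaterthan \unitobj\bigr)$ via \eqref{eq:internal-Hom-coend-eval} and match it with $\ieval_{\unitobj,\unitobj} \circ (j(X \otimes Y) \ogreaterthan \unitobj)$, with uniqueness coming from the Fubini-type universality of the family $j(X) \otimes j(Y)$ and the unit checked directly. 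This is exactly the paper's argument, so your plan is correct.
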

\begin{proof}
  It is easy to see that the unit of $A$ is given as stated. For $X, Y \in \mathcal{C}$, we have a commutative diagram
  \begin{equation*}
    \xymatrix{
      ((X \boxtimes {}^* \! X) \otimes (Y \boxtimes {}^* Y)) \ogreaterthan \unitobj
      \ar[rrr]^(.6){(j(X) \otimes j(Y)) \ogreaterthan \unitobj} \ar[d]_{\cong}
      & & & (A \otimes A) \ogreaterthan \unitobj \ar[d]_{\cong}
      \ar[rr]^(.6){m \ogreaterthan \unitobj}
      & & A \ogreaterthan \unitobj \ar[dd]_{\ieval_{\unitobj,\unitobj}} \\
      (X \boxtimes {}^* \! X) \ogreaterthan ((Y \boxtimes {}^* Y) \ogreaterthan \unitobj)
      \ar[rrr]^(.6){j(X) \ogreaterthan (j(Y) \ogreaterthan \unitobj)}
      \ar[d]_{(X \boxtimes {}^* \! X) \ogreaterthan \eval_{{}^*Y}}
      & & & A \ogreaterthan (A \ogreaterthan \unitobj)
      \ar[d]_{A \ogreaterthan \ieval_{\unitobj,\unitobj}} \\
      (X \boxtimes {}^* X) \ogreaterthan \unitobj
      \ar[rrr]_(.6){j(X) \ogreaterthan \unitobj}
      & & & A \ogreaterthan \unitobj \ar[rr]_{\ieval_{\unitobj,\unitobj}}
      & & \unitobj
    }
  \end{equation*}
  by~\eqref{eq:internal-Hom-coend-eval} and the definition of $m$. Again by~\eqref{eq:internal-Hom-coend-eval}, the composition along the bottom row is $\eval_{{}^*\!X}$. Hence we obtain:
  \begin{align*}
    \ieval_{\unitobj,\unitobj} \circ (m \ogreaterthan \unitobj) \circ ((j(X) \otimes j(Y)) \ogreaterthan \unitobj)
    & = \eval_{{}^* \! X} \circ (\id_{X \boxtimes {}^* \! X} \ogreaterthan \eval_{{}^*Y}) \\
    & = \eval_{{}^* \! X} \circ (\id_{X} \otimes \eval_{{}^*Y} \otimes \id_{{}^* \! X}) \\
    & = \eval_{{}^* Y \otimes {}^* X} \\
    & = \ieval_{\unitobj,\unitobj} \circ (j(X \otimes Y) \ogreaterthan \unitobj).
  \end{align*}
  Since the map $\Hom_{\mathcal{C}^{\env}}(M, A) \to \Hom_{\mathcal{C}}(M \ogreaterthan \unitobj, \unitobj)$ given by $f \mapsto \ieval_{\unitobj,\unitobj} \circ (f \ogreaterthan \unitobj)$ is bijective, the commutativity of~\eqref{eq:alg-A-mult} follows.
\end{proof}

\subsection{The algebra $A$ and the central Hopf monad}

For $V, X \in \mathcal{C}$, we set
\begin{equation*}
  Z(V) = A \ogreaterthan V
  \text{\quad and \quad}
  i_V(X) = j(X^*) \ogreaterthan V:
  X^* \otimes V \otimes X \to Z(V),
\end{equation*}
where $A$ and $j$ are as before. Since $A$ is an algebra in $\mathcal{C}^{\env}$, the functor $Z$ has a structure of a monad. More precisely, the multiplication of $Z$ is given by
\begin{equation*}
  \mu_V: Z^2(V) = A \ogreaterthan (A \ogreaterthan V)
  = (A \otimes A) \ogreaterthan V
  \xrightarrow{\ m \ogreaterthan V \ } A \ogreaterthan V = Z(V)
  \quad (V \in \mathcal{C})
\end{equation*}
and the unit of $Z$ is given by
\begin{equation*}
  \eta_V: V = \unitobj \ogreaterthan V
  \xrightarrow{\ u \ogreaterthan V \ } A \ogreaterthan V = Z(V)
  \quad (V \in \mathcal{C}).
\end{equation*}
Note that $\{ i_V(X) \}_{X \in \mathcal{C}}$ is a coend of $(X, Y) \mapsto X^* \otimes V \otimes Y$. By Lemma~\ref{lem:internal-Hom-coend}, one can check that the unit $\eta$ is given by $\eta_V = i_V(\unitobj)$ for $V \in \mathcal{C}$ and the multiplication $\mu$ is determined by the same formula as \eqref{eq:Hopf-monad-Z-mult}. In conclusion, the monad $Z$ under consideration is precisely the central Hopf monad on $\mathcal{C}$.

Let $K: \mathcal{C} \to \mathcal{H} := (\mathcal{C}^{\env})_A$ be the equivalence given by \eqref{eq:Hopf-module-equiv}. Note that the functor $T = A \otimes (-)$ defines a monad on $\mathcal{H}$ such that ${}_T \mathcal{H} = {}_{A}(\mathcal{C}^{\env})_A$. Since $K$ is in fact an equivalence of $\mathcal{C}^{\env}$-module categories, it induces an equivalence between ${}_Z\mathcal{C}$ and ${}_T \mathcal{H}$. More precisely, if $M$ is a $Z$-module with action $\rho$, then $K(M) \in \mathcal{H}$ is an $A$-bimodule with the left action given by
\begin{equation*}
  A \otimes K(M) \xrightarrow{\quad \cong \quad} K(A \ogreaterthan M) = K(Z(M)) \xrightarrow{\ K(\rho) \ } K(M)
\end{equation*}
and this construction gives rise to an equivalence of categories
\begin{equation}
  \label{eq:ZC-and-A-bimod-eq}
  \widetilde{K}: {}_Z \mathcal{C} \xrightarrow{\ \approx \ } {}_T \mathcal{H} = {}_A (\mathcal{C}^{\env})_A,
  \quad M \mapsto K(M)
  \quad (M \in {}_Z \mathcal{C}).
\end{equation}
Recall from \S\ref{subsec:Hopf-monads} that ${}_Z \mathcal{C}$ can be identified with $\mathcal{Z}(\mathcal{C})$. By the definition of $\widetilde{K}$, it is obvious that the following diagram commutes:
\begin{equation}
  \label{eq:ZC-and-A-bimod}
  \xymatrix{
    \mathcal{Z}(\mathcal{C}) \ar[rr]^{\widetilde{K}}
    \ar[d]_{U}
    & & {}_T \mathcal{H} \ar@{=}[r] & {}_A (\mathcal{C}^{\env})_A
    \ar[d]^{F_A} \\
    \mathcal{C} \ar[rr]_{K}
    & & \mathcal{H} \ar@{=}[r] & (\mathcal{C}^{\env})_A,
  }
\end{equation}
where $U$ and $F_A$ are forgetful functors.

\begin{remark}
  Etingof and Ostrik \cite[Corollary 3.35]{MR2119143} showed that ${}_A (\mathcal{C}^{\env})_A$ is equivalent to $\mathcal{Z}(\mathcal{C})$. However, since they did not give an equivalence in an explicit way, it is not clear that there exists a commutative diagram like \eqref{eq:ZC-and-A-bimod}. In this paper, we give an equivalence ${}_A (\mathcal{C}^{\env})_A \approx \mathcal{Z}(\mathcal{C})$ in a somewhat explicit way by investigating the relation between the algebra $A$ and the central Hopf monad. The commutativity of \eqref{eq:ZC-and-A-bimod} is obvious from our point of view.
\end{remark}

\subsection{Characterizations of unimodularity}

In this subsection, we prove the main theorem of this paper. Recall our assumption that $\mathcal{C}$ is a finite tensor category with property \eqref{eq:C-env-assume}. Let $L$ and $R$ be a left adjoint and a right adjoint of the forgetful functor $U: \mathcal{Z}(\mathcal{C}) \to \mathcal{C}$, and let $D \in \mathcal{C}$ be the distinguished invertible object. The key observation is the following lemma:

\begin{lemma}
  \label{lem:induction-L-R-1}
  There are natural isomorphisms
  \begin{equation*}
    L(D \otimes -) \cong R \cong L(- \otimes D)
    \quad \text{and} \quad
    R(D^* \otimes -) \cong L \cong R(- \otimes D^*).
  \end{equation*}    
\end{lemma}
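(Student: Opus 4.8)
The plan is to exploit the commutative diagram~\eqref{eq:ZC-and-A-bimod} together with the explicit adjunctions for forgetful functors provided by Lemma~\ref{lem:adj-restriction}, and then transport the answer back along the equivalences $K$ and $\widetilde{K}$. Concretely, since $U = F_A^{-1} \circ \text{(something)}$ — more precisely $K \circ U \cong F_A \circ \widetilde{K}$ with $K$ and $\widetilde{K}$ equivalences — a left (resp.\ right) adjoint of $U$ is obtained by composing a left (resp.\ right) adjoint of $F_A \colon {}_A(\mathcal{C}^{\env})_A \to (\mathcal{C}^{\env})_A$ with $K$ on the source side and $\widetilde{K}^{-1}$ on the target side. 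Here $F_A$ is the functor forgetting the \emph{left} $A$-action, so Lemma~\ref{lem:adj-restriction}(1),(2) (applied in $\mathcal{C}^{\env}$, with $B = A$ playing the role of the right-acting algebra, which we keep) gives that ${}_A A \otimes (-)$ is left adjoint to $F_A$ and ${}^*(A_A) \otimes (-)$ is right adjoint to $F_A$. Chasing $\unitobj$ (or a general $V$) through $K$, which sends $V \mapsto (V \boxtimes \unitobj) \otimes A_A$ by~\eqref{eq:Hopf-module-equiv}, we get that $L(V)$ corresponds to the $A$-bimodule $A \otimes_A \big((V\boxtimes\unitobj)\otimes A_A\big) \cong (V \boxtimes \unitobj) \otimes A$ with its canonical bimodule structure, and similarly $R(V)$ corresponds to ${}^*(A_A) \otimes (V\boxtimes\unitobj)\otimes A$.

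The heart of the argument is then to compare these two $A$-bimodules. The key input is the defining property~\eqref{eq:d-inv-obj-def} of the distinguished invertible object, namely $(D \boxtimes \unitobj) \otimes A_A \cong ({}_A A)^*$ as right $A$-modules, equivalently (taking duals, using $\mathcal{C}^{\env}$ rigid and $D$ invertible) ${}^*(A_A) \cong (D^{-1} \boxtimes \unitobj) \otimes {}^*A$ — I will massage this into a statement identifying ${}^*(A_A)$, as a \emph{bimodule}, with a left/right translate of $A$ by $D \boxtimes \unitobj$. Feeding that into the bimodule describing $R(V)$ and then translating back along $\widetilde{K}^{-1}$ and $K^{-1}$ (which, being equivalences of $\mathcal{C}^{\env}$-module categories, intertwine the $\mathcal{C}^{\env}$-action, and in particular the $D \boxtimes \unitobj$-translation on the $(\mathcal{C}^{\env})_A$ side with tensoring by $D$ on either side in $\mathcal{C}$ via~\eqref{eq:C-env-action-1}) yields $R(V) \cong L(D \otimes V)$ and also $R(V) \cong L(V \otimes D)$, since $D \boxtimes \unitobj$ can be moved to either tensor slot. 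The remaining two isomorphisms $R(D^* \otimes -) \cong L \cong R(- \otimes D^*)$ follow formally by substituting $V \mapsto D^* \otimes V$ (resp.\ $V \mapsto V \otimes D^*$) and using invertibility of $D$, i.e.\ $D \otimes D^* \cong \unitobj \cong D^* \otimes D$.

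I expect the main obstacle to be bookkeeping of left versus right module structures and the several conventions at play — the reversed tensor product in $\mathcal{C}^{\rev}$, the orientation of the $\mathcal{C}^{\env}$-action~\eqref{eq:C-env-action-1}, the distinction between $F_A$ (forgetting the left action) and $F_B$, and the fact that ${}^*(A_A)$ must be understood with its induced \emph{left} $A$-module structure (from Lemma~\ref{lem:adj-restriction}) while simultaneously retaining the ambient right $A$-action. The crucial technical point is verifying that the identification~\eqref{eq:d-inv-obj-def}, which a priori is only an isomorphism of \emph{right} $A$-modules, can be upgraded to (or suffices to produce) the required \emph{bimodule} isomorphism after tensoring; I would handle this by noting that both $L(V)$ and $R(V)$ are, up to the equivalences, free-on-one-side induced bimodules, so a right-module isomorphism on the appropriate "core" propagates to a bimodule isomorphism after applying the free functor. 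A secondary but routine check is that the $\mathcal{C}^{\env}$-module equivalences $K, \widetilde{K}$ really do send tensoring by $D \boxtimes \unitobj$ to tensoring by $D$ on the left (and, via the reversed slot, on the right), which is immediate from~\eqref{eq:C-env-action-1} once the equivalence-of-module-categories property recorded around~\eqref{eq:Hopf-module-equiv} and~\eqref{eq:ZC-and-A-bimod-eq} is invoked.
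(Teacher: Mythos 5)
Your derivation of the first and third isomorphisms is essentially the paper's argument: transport $L$ and $R$ through the commutative square \eqref{eq:ZC-and-A-bimod} using Lemma~\ref{lem:adj-restriction}, identify $\widetilde{K}L(V)\cong {}_AA\otimes(V\boxtimes\unitobj)\otimes A_A$ and $\widetilde{K}R(V)\cong {}^*(A_A)\otimes(V\boxtimes\unitobj)\otimes A_A$, and substitute ${}^*(A_A)\cong {}_AA\otimes(D\boxtimes\unitobj)$, which is the correct dualization of \eqref{eq:d-inv-obj-def} (your intermediate formula ``${}^*(A_A)\cong(D^{-1}\boxtimes\unitobj)\otimes{}^*A$'' has the translate on the wrong side, but your ``free-on-one-side'' remark about upgrading to a bimodule isomorphism is the right fix, since the right $A$-action is untouched). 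This gives $R\cong L(D\otimes-)$ and then $L\cong R(D^*\otimes-)$ exactly as in the paper.

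The genuine gap is the right-handed versions. Your justification ``$D\boxtimes\unitobj$ can be moved to either tensor slot'' is false: in $\mathcal{C}^{\env}$ one has $(D\boxtimes\unitobj)\otimes(V\boxtimes\unitobj)=(D\otimes V)\boxtimes\unitobj$ while $(V\boxtimes\unitobj)\otimes(D\boxtimes\unitobj)=(V\otimes D)\boxtimes\unitobj$, and under the action \eqref{eq:C-env-action-1} the translation by $D\boxtimes\unitobj$ corresponds only to $D\otimes(-)$; tensoring by $D$ on the right corresponds to $\unitobj\boxtimes D$. Since $D$ is invertible but not assumed central in $\mathcal{C}$, $D\otimes V\not\cong V\otimes D$ in general, so your computation only yields $R\cong L(D\otimes-)$, not $R\cong L(-\otimes D)$. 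The paper closes this gap by a different mechanism: it invokes the canonical isomorphisms $R\cong L^!$ and $L\cong R^!$ from \S\ref{subsec:colax-lax-adj} and computes $R(V)\cong{}^*L(V^*)\cong{}^*R(D^*\otimes V^*)={}^*R((V\otimes D)^*)\cong L(V\otimes D)$, using $(V\otimes D)^*=D^*\otimes V^*$. Alternatively, you could repair your own route by noting that, because $K$ is an equivalence of $\mathcal{C}^{\env}$-module categories, both $(D\boxtimes\unitobj)\otimes A_A$ and $(\unitobj\boxtimes D)\otimes A_A$ are isomorphic to $K(D)=\iHom(\unitobj,D)$ as right $A$-modules, so \eqref{eq:d-inv-obj-def} holds equally with $\unitobj\boxtimes D$ in place of $D\boxtimes\unitobj$; running your bimodule computation with that version produces $(\unitobj\boxtimes D)\otimes(V\boxtimes\unitobj)=V\boxtimes D$ and hence $R(V)\cong L(V\otimes D)$. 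As written, however, this step is missing and the stated justification would fail.
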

\begin{proof}
  Let $\widetilde{K}$ be the equivalence given by \eqref{eq:ZC-and-A-bimod-eq}. By Lemma \ref{lem:adj-restriction} and the commutativity of the diagram \eqref{eq:ZC-and-A-bimod}, we have
  \begin{equation*}
    \widetilde{K} L(V)
    \cong {}_A A \otimes K(V)
    \cong {}_A A \otimes (V \boxtimes \unitobj) \otimes A_A.
  \end{equation*}
  Note that ${}^*(A_A) \cong {}_A A \otimes (D \boxtimes \unitobj)$ by \eqref{eq:d-inv-obj-def}. Again by Lemma \ref{lem:adj-restriction}, we have
  \begin{align*}
      \widetilde{K} R(V)
      & \cong {}^*(A_A) \otimes (V \boxtimes \unitobj) \otimes A_A \\
      & \cong {}_A A \otimes (D \boxtimes \unitobj) \otimes (V \boxtimes \unitobj) \otimes A_A
      \cong \smash{\widetilde{K}} L(D \otimes V)
  \end{align*}
  for $V \in \mathcal{C}$. Hence we obtain the first natural isomorphism. The third one is obtained from the first one and the fact that $D$ is invertible as follows:
  \begin{equation*}
    R(D^* \otimes V) \cong L(D \otimes D^* \otimes V) \cong L(V).
  \end{equation*}
  For a functor $T$ between rigid monoidal categories, we set $T^!(X) = {}^*T(X^*)$. Recall from \S\ref{subsec:colax-lax-adj} that $R \cong L^!$ and $L \cong R^!$. The second one is obtained from this fact and the third natural isomorphism as follows:
  \begin{equation*}
    R(V) \cong {}^* \! L(V^*) \cong {}^* \! R(D^* \otimes V^*) \cong {}^* \! R((V \otimes D)^*) \cong L(V \otimes D).
  \end{equation*}
  The last one is obtained from the second one and the invertibility of $D$.
\end{proof}

The above lemma implies many relations between $U$, $L$, $R$ and $D$. Here we prove the following lemma, which describes a left adjoint of $L$ and a right adjoint of $R$ in terms of $U$ and $D$.

\begin{corollary}
  \label{cor:induction-L-R-2}
  There are natural isomorphisms
  \begin{gather*}
    \Hom_{\mathcal{C}}(D \otimes U(X), V)
    \cong \Hom_{\mathcal{Z}(\mathcal{C})}(X, L(V))
    \cong \Hom_{\mathcal{C}}(U(X) \otimes D, V), \\
    \Hom_{\mathcal{C}}(V, D^* \otimes U(X))
    \cong \Hom_{\mathcal{Z}(\mathcal{C})}(R(V), X)
    \cong \Hom_{\mathcal{C}}(V, U(X) \otimes D^*).
  \end{gather*}
\end{corollary}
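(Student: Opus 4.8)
The plan is to deduce all four displayed natural isomorphisms from Lemma~\ref{lem:induction-L-R-1} by purely formal manipulation: composing the two adjunctions $L \dashv U \dashv R$ with the natural isomorphisms of that lemma and with the duality adjunctions of $\mathcal{C}$. Nothing beyond Lemma~\ref{lem:induction-L-R-1} and the invertibility of $D$ will be used, so I do not expect a genuine obstacle here; the only care needed is in keeping track of which side the tensorands $D$ and $D^*$ sit on.

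First I would record the two elementary adjunctions coming from rigidity. The left-duality adjunction $(-) \otimes D \dashv (-) \otimes D^*$ gives
\begin{equation*}
  \Hom_{\mathcal{C}}(A \otimes D, B) \cong \Hom_{\mathcal{C}}(A, B \otimes D^*) \quad (A, B \in \mathcal{C}),
\end{equation*}
with no hypothesis on $D$. On the other hand, because $D$ is invertible the functor $D \otimes (-)$ is an equivalence quasi-inverse to $D^* \otimes (-)$ (here one uses that $\eval_D$ and $\coev_D$ are isomorphisms), so in addition
\begin{equation*}
  \Hom_{\mathcal{C}}(D \otimes A, B) \cong \Hom_{\mathcal{C}}(A, D^* \otimes B) \quad (A, B \in \mathcal{C}).
\end{equation*}
Both of these are natural in $A$ and $B$.

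Next I would assemble the first chain using $U \dashv R$ and the isomorphisms $L \cong R(D^* \otimes (-)) \cong R((-) \otimes D^*)$ of Lemma~\ref{lem:induction-L-R-1}, namely
\begin{align*}
  \Hom_{\mathcal{Z}(\mathcal{C})}(X, L(V))
    &\cong \Hom_{\mathcal{Z}(\mathcal{C})}(X, R(D^* \otimes V))
     \cong \Hom_{\mathcal{C}}(U(X), D^* \otimes V)
     \cong \Hom_{\mathcal{C}}(D \otimes U(X), V), \\
  \Hom_{\mathcal{Z}(\mathcal{C})}(X, L(V))
    &\cong \Hom_{\mathcal{Z}(\mathcal{C})}(X, R(V \otimes D^*))
     \cong \Hom_{\mathcal{C}}(U(X), V \otimes D^*)
     \cong \Hom_{\mathcal{C}}(U(X) \otimes D, V),
\end{align*}
and dually the second chain using $L \dashv U$ and $R \cong L(D \otimes (-)) \cong L((-) \otimes D)$:
\begin{align*}
  \Hom_{\mathcal{Z}(\mathcal{C})}(R(V), X)
    &\cong \Hom_{\mathcal{Z}(\mathcal{C})}(L(D \otimes V), X)
     \cong \Hom_{\mathcal{C}}(D \otimes V, U(X))
     \cong \Hom_{\mathcal{C}}(V, D^* \otimes U(X)), \\
  \Hom_{\mathcal{Z}(\mathcal{C})}(R(V), X)
    &\cong \Hom_{\mathcal{Z}(\mathcal{C})}(L(V \otimes D), X)
     \cong \Hom_{\mathcal{C}}(V \otimes D, U(X))
     \cong \Hom_{\mathcal{C}}(V, U(X) \otimes D^*),
\end{align*}
in each line the last isomorphism being one of the two duality adjunctions just recorded. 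Finally I would note that every isomorphism appearing is natural in the displayed variables, being a composite of natural isomorphisms, so the four stated natural isomorphisms follow. The main point to get right — and essentially the only non-formal ingredient — is that $D \otimes (-)$ and $D^* \otimes (-)$ are mutually quasi-inverse, which is exactly the invertibility of $D$; apart from that the argument is mechanical.
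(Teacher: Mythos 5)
Your proof is correct and follows essentially the same route as the paper: apply Lemma~\ref{lem:induction-L-R-1} to replace $L$ by $R(D^*\otimes-)$ or $R((-)\otimes D^*)$ (resp.\ $R$ by $L(D\otimes-)$ or $L((-)\otimes D)$), use the adjunction $U\dashv R$ (resp.\ $L\dashv U$), and finish with the tensor-by-$D$ adjunctions coming from invertibility of $D$. The paper only writes out the first chain and says the rest are similar; your write-up simply makes those remaining chains explicit.
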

\begin{proof}
  We only give the first natural isomorphism, since the others are obtained in a similar way. By Lemma~\ref{lem:induction-L-R-1} and the invertibility of $D$, we have:
  \begin{align*}
    \Hom_{\mathcal{Z}(\mathcal{C})}(X, L(V))
    & \cong \Hom_{\mathcal{Z}(\mathcal{C})}(X, R(D^* \otimes V)) \\
    & \cong \Hom_{\mathcal{Z}(\mathcal{C})}(U(X), D^* \otimes V) \\
    & \cong \Hom_{\mathcal{Z}(\mathcal{C})}(D \otimes U(X), V). \qedhere
  \end{align*}
\end{proof}

\begin{corollary}
  \label{cor:induction-L-R-exact}
  $L$ and $R$ are exact.
\end{corollary}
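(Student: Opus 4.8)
The plan is simply to observe that each of $L$ and $R$ has adjoint functors on \emph{both} sides, so that each is simultaneously right and left exact. Recall the general principle that an additive functor between abelian categories is right exact if it admits a left adjoint (it then preserves all colimits, in particular cokernels) and left exact if it admits a right adjoint.

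First I would treat $L$. By construction $L$ is a left adjoint of $U$, hence right exact. For left exactness, the first chain of isomorphisms in Corollary~\ref{cor:induction-L-R-2} exhibits the functor $X \mapsto D \otimes U(X)$ as a left adjoint of $L$; therefore $L$ is also left exact, hence exact. Symmetrically, $R$ is a right adjoint of $U$ and so left exact, while the second chain of isomorphisms in Corollary~\ref{cor:induction-L-R-2} exhibits $X \mapsto D^{*} \otimes U(X)$ as a right adjoint of $R$, so $R$ is right exact, hence exact.

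Alternatively one can bypass Corollary~\ref{cor:induction-L-R-2} and argue directly from Lemma~\ref{lem:induction-L-R-1}: tensoring with a fixed object of a finite tensor category is exact in each variable, so the isomorphism $R \cong L(D \otimes -)$ together with the right exactness of $L$ yields the right exactness of $R$ (which is already left exact, being a right adjoint), and $L \cong R(D^{*} \otimes -)$ likewise yields the left exactness of $L$. In either approach there is essentially no obstacle: all of the real content has already been packaged into Lemma~\ref{lem:induction-L-R-1} and Corollary~\ref{cor:induction-L-R-2}, whose proofs rest on the invertibility of $D$ and on the commuting square relating $\mathcal{Z}(\mathcal{C})$ to the category of $A$-bimodules.
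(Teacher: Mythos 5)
Your argument is correct and is exactly the one the paper intends: the corollary is stated as an immediate consequence of Corollary~\ref{cor:induction-L-R-2}, since $L$ is a left adjoint of $U$ with a left adjoint $D \otimes U(-)$, and $R$ is a right adjoint of $U$ with a right adjoint $U(-) \otimes D^*$ (or $D^* \otimes U(-)$), so each is both left and right exact. Your alternative route via Lemma~\ref{lem:induction-L-R-1} and exactness of the tensor product is also fine, but it is only a cosmetic variation.
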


Now we prove our main theorem:

\begin{theorem}
  \label{thm:main-theorem}
  With the above notation, the following assertions are equivalent:
  \begin{enumerate}
  \item $\mathcal{C}$ is unimodular.
  \item $U$ is a Frobenius functor, {\it i.e.}, $L \cong R$.
  \item There is a natural isomorphism $L(V^*) \cong L(V)^*$.
  \item There is a natural isomorphism $R(V^*) \cong R(V)^*$.
  \end{enumerate}
  Moreover, if the unit object $\unitobj \in \mathcal{C}$ is simple, the above assertions are equivalent to each of the following assertions:
  \begin{enumerate}
    \setcounter{enumi}{4}
  \item $L(\unitobj) \cong L(\unitobj)^*$.
  \item $R(\unitobj) \cong R(\unitobj)^*$.
  \item $\Hom_{\mathcal{Z}(\mathcal{C})}(\unitobj, L(\unitobj)) \ne 0$.
  \item $\Hom_{\mathcal{Z}(\mathcal{C})}(R(\unitobj), \unitobj) \ne 0$.
  \end{enumerate}
\end{theorem}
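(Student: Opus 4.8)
The strategy is to reduce every equivalence to Lemma~\ref{lem:induction-L-R-1} together with the identities $R \cong L^!$ and $L \cong R^!$ recorded in \S\ref{subsec:colax-lax-adj}, where $T^!(V) = {}^*T(V^*)$. For $(1)\Rightarrow(2)$: if $D \cong \unitobj$, then $R \cong L(D \otimes -) \cong L$ by Lemma~\ref{lem:induction-L-R-1}. For $(2)\Rightarrow(1)$: by Corollary~\ref{cor:induction-L-R-2} the functor $D \otimes U(-)$ is left adjoint to $L$, while $U$ is left adjoint to $R$ by definition; hence if $L \cong R$, then both $U(-)$ and $D \otimes U(-)$ are left adjoints of $L$, so $U(-) \cong D \otimes U(-)$, and evaluating this natural isomorphism at the unit object of $\mathcal{Z}(\mathcal{C})$ gives $\unitobj \cong D$. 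For $(2)\Leftrightarrow(3)$: applying the left duality functor to a natural isomorphism $L(V^*) \cong L(V)^*$ and using $({}^*W)^* = W$ turns it into a natural isomorphism $L \cong L^!$, which, since $L^! \cong R$, is precisely $L \cong R$; the argument is reversible. The equivalence $(2)\Leftrightarrow(4)$ is identical with $L \cong R^!$ in place of $R \cong L^!$. This settles the first four conditions, with no hypothesis on $\unitobj$.

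Now assume $\unitobj$ is simple. I would first note that $D$ is a simple object: since $\otimes$ is exact and $D$ is invertible with $D \otimes D^* \cong \unitobj$, tensoring any subobject of $D$ with $D^*$ realizes it as a subobject of $\unitobj$, so $D$ has no proper nonzero subobject; the same applies to the invertible object $D^*$. Then Corollary~\ref{cor:induction-L-R-2}, applied with $X$ the unit object of $\mathcal{Z}(\mathcal{C})$ and $V = \unitobj$, gives $\Hom_{\mathcal{Z}(\mathcal{C})}(\unitobj, L(\unitobj)) \cong \Hom_{\mathcal{C}}(D, \unitobj)$ and $\Hom_{\mathcal{Z}(\mathcal{C})}(R(\unitobj), \unitobj) \cong \Hom_{\mathcal{C}}(\unitobj, D^*)$; since $\unitobj$, $D$ and $D^*$ are simple, each right-hand side is nonzero if and only if $D \cong \unitobj$, which proves $(1)\Leftrightarrow(7)$ and $(1)\Leftrightarrow(8)$. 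To bring in $(5)$ and $(6)$, taking $V = \unitobj$ shows $(3) \Rightarrow (5)$ and $(4) \Rightarrow (6)$; conversely, using rigidity of $\mathcal{Z}(\mathcal{C})$ and the adjunctions $L \dashv U$, $U \dashv R$, one computes $\Hom_{\mathcal{Z}(\mathcal{C})}(\unitobj, L(\unitobj)) \cong \Hom_{\mathcal{Z}(\mathcal{C})}(\unitobj, L(\unitobj)^*) \cong \Hom_{\mathcal{Z}(\mathcal{C})}(L(\unitobj), \unitobj) \cong \Hom_{\mathcal{C}}(\unitobj, \unitobj) \ne 0$ under $(5)$, which is $(7)$, and symmetrically $\Hom_{\mathcal{Z}(\mathcal{C})}(R(\unitobj), \unitobj) \cong \Hom_{\mathcal{Z}(\mathcal{C})}(\unitobj, R(\unitobj)^*) \cong \Hom_{\mathcal{Z}(\mathcal{C})}(\unitobj, R(\unitobj)) \cong \Hom_{\mathcal{C}}(\unitobj, \unitobj) \ne 0$ under $(6)$, which is $(8)$. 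Chaining these yields the cycles $(1)\Rightarrow(2)\Rightarrow(3)\Rightarrow(5)\Rightarrow(7)\Rightarrow(1)$ and $(1)\Rightarrow(2)\Rightarrow(4)\Rightarrow(6)\Rightarrow(8)\Rightarrow(1)$, completing the proof.

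I do not expect a serious obstacle once Lemma~\ref{lem:induction-L-R-1} and Corollary~\ref{cor:induction-L-R-2} are in hand; the only delicate points are the bookkeeping with left versus right duals in $(2)\Leftrightarrow(3)\Leftrightarrow(4)$ (one must invoke $R\cong L^!$ and $L\cong R^!$ rather than merely ``$U$ is Frobenius''), and the verification that $D$ is simple, which is exactly where the simplicity hypothesis on $\unitobj$ genuinely enters.
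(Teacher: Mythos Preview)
Your proposal is correct and follows essentially the same approach as the paper. The only cosmetic difference is in $(2)\Rightarrow(1)$: the paper uses Corollary~\ref{cor:induction-L-R-2} with $X=\unitobj$ to get $\Hom_{\mathcal{C}}(D,V)\cong\Hom_{\mathcal{Z}(\mathcal{C})}(\unitobj,L(V))\cong\Hom_{\mathcal{Z}(\mathcal{C})}(\unitobj,R(V))\cong\Hom_{\mathcal{C}}(\unitobj,V)$ and then invokes Yoneda, whereas you phrase this as uniqueness of left adjoints of $L$; these are the same argument in different clothing, and your organization of the cycles through $(5)$--$(8)$ matches the paper's exactly.
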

\begin{proof}
  It is obvious from Lemma~\ref{lem:induction-L-R-1} that (1) implies (2). Using the isomorphisms $R \cong L^!$ and $L \cong R^!$, we easily see that (2), (3) and (4) are equivalent. We show that (2) implies (1). If (2) holds, then
  \begin{equation*}
    \Hom_{\mathcal{C}}(D, V)
    \cong \Hom_{\mathcal{Z}(\mathcal{C})}(\unitobj, L(V))
    \cong \Hom_{\mathcal{Z}(\mathcal{C})}(\unitobj, R(V))
    \cong \Hom_{\mathcal{C}}(\unitobj, V)
  \end{equation*}
  by Corollary~\ref{cor:induction-L-R-2} with $X = \unitobj$. Thus $D \cong \unitobj$, {\it i.e.}, $\mathcal{C}$ is unimodular. Hence we have showed that the assertions (1), (2), (3) and (4) are equivalent.

  It is obvious that (3) implies (5) (without the assumption that $\unitobj \in \mathcal{C}$ is a simple object). If (5) holds, then we have
  \begin{equation*}
    \Hom_{\mathcal{Z}(\mathcal{C})}(\unitobj, L(\unitobj))
    \cong \Hom_{\mathcal{Z}(\mathcal{C})}(\unitobj, L(\unitobj)^*)
    \cong \Hom_{\mathcal{Z}(\mathcal{C})}(L(\unitobj), \unitobj)
    \cong \Hom_{\mathcal{C}}(\unitobj, \unitobj)
  \end{equation*}
  and therefore (7) holds (again without the assumption on $\unitobj$). We prove (4) $\Rightarrow$ (6) and (6) $\Rightarrow$ (8) in a similar way. Moreover, since
  \begin{equation*}
    \Hom_{\mathcal{Z}(\mathcal{C})}(\unitobj, L(\unitobj))
    \cong \Hom_{\mathcal{Z}(\mathcal{C})}(L(\unitobj)^*, \unitobj)
    \cong \Hom_{\mathcal{Z}(\mathcal{C})}(R(\unitobj), \unitobj),
  \end{equation*}
  the assertions (7) and (8) are equivalent.

  Now we suppose that $\unitobj \in \mathcal{C}$ is simple. To complete the proof, it is sufficient to show that (7) implies (1). If (7) holds, then we have
  \begin{equation*}
    \Hom_{\mathcal{C}}(D, \unitobj)
    \cong \Hom_{\mathcal{Z}(\mathcal{C})}(\unitobj, L(\unitobj)) \ne 0
  \end{equation*}
  by Corollary~\ref{cor:induction-L-R-2}. Since $\unitobj \in \mathcal{C}$ is assumed to be simple, every invertible object of $\mathcal{C}$ is also simple. Thus, by Schur's lemma, we have $D \cong \unitobj$, {\it i.e.}, (1) holds.
\end{proof}

\section{Applications, I. Further results on the unimodularity}
\label{sec:applications-1}

\subsection{General assumptions}
\label{subsec:app-1-nota}

In this section, we apply our techniques to investigate further properties of the distinguished invertible object. As before, $\mathcal{C}$ is a finite tensor category over a field $k$ with property \eqref{eq:C-env-assume}, $D \in \mathcal{C}$ is the distinguished invertible object, $U: \mathcal{Z}(\mathcal{C}) \to \mathcal{C}$ is the forgetful functor, and $L$ and $R$ are a left adjoint and a right adjoint of $U$, respectively.

\subsection{The Radford $S^4$-formula}
\label{subsec:radford-s4}

Let $\mathcal{S}$ denote the left duality functor on $\mathcal{C}$, and let $\mathcal{I}_D: \mathcal{C} \to \mathcal{C}$ denote the monoidal functor defined by $\mathcal{I}_D(X) = D \otimes X \otimes D^*$ ($X \in \mathcal{C}$). One of the main results of \cite{MR2097289} is that there exists an isomorphism
\begin{equation}
  \label{eq:Radford-S4}
  \mathcal{S}^4 \cong \mathcal{I}_D
\end{equation}
of monoidal functors (the Radford $S^4$-formula). Here we explain how this formula looks like from the argument in the previous section (see \cite{2013arXiv1312.7188D} and \cite{2014arXiv1412.0211S} for other approaches to the Radford $S^4$-formula).

We have used the equivalence $\Phi: \mathcal{C}^{\env} \to \LEX(\mathcal{C})$ given by~\eqref{eq:Lex-C-equiv-1} to prove our main theorem. As before, we denote its quasi-inverse by $\overline{\Phi}$. We compute
\begin{align*}
  \Phi(\overline{\Phi}(F) \otimes \overline{\Phi}(G))
  & = \textstyle \int^{X,Y \in \mathcal{C}} \Phi((F(X) \boxtimes {}^*X) \otimes (G(X) \boxtimes {}^*Y)) \\
  & = \textstyle \int^{X,Y \in \mathcal{C}} \Phi((F(X) \otimes G(Y)) \boxtimes {}^* (X \otimes Y)) \\
  & = \textstyle \int^{X,Y \in \mathcal{C}} \Hom_{\mathcal{C}}(X \otimes Y, -) \cdot (F(X) \otimes G(Y)).
\end{align*}
Thus, $F \star G := \Phi(\overline{\Phi}(F) \otimes \overline{\Phi}(G))$ is the {\em Day convolution} \cite{DayPhDThesis} of $F$ and $G$, and the equivalence $\Phi$ is in fact a monoidal equivalence
\begin{equation*}
  \Phi: (\mathcal{C}^{\env}, \otimes, \unitobj \boxtimes \unitobj) \to (\LEX(\mathcal{C}), \star, J),
\end{equation*}
where $J = \Hom_{\mathcal{C}}(\unitobj, -) \cdot \unitobj$. In particular, $\Phi$ sends an algebra in $\mathcal{C}^{\env}$ to an algebra in $\LEX(\mathcal{C})$ with respect to the Day convolution, {\it i.e.}, a $k$-linear left exact monoidal endofunctor on $\mathcal{C}$ (see \cite[Example 3.2.2]{DayPhDThesis}).

Now let $\iHom$ denote the internal Hom functor for the $\mathcal{C}^{\env}$-module category $\mathcal{C}$, and let $A = \iHom(\unitobj, \unitobj)$ be the algebra in $\mathcal{C}^{\env}$ used to define $D$. By Lemma~\ref{lem:Nakayama-iso} and the definition of $D$, we obtain an isomorphism
\begin{equation}
  \label{eq:alg-A-double-dual}
  A^{**} \cong A^D := D \otimes A \otimes D^*
\end{equation}
of algebras in $\mathcal{C}^{\env}$. Since $(-)^{**}: \mathcal{C}^{\env} \to \mathcal{C}^{\env}$ is an equivalence, we have
\begin{equation*}
  A^{**}
  \cong \int^{X \in \mathcal{C}} (X \boxtimes {}^*X)^{**}
  \cong \int^{X} X^{**} \boxtimes {}^{***}X
  \cong \int^{X} X^{****} \boxtimes {}^{*}X
  \cong \overline{\Phi}(\mathcal{S}^4)
\end{equation*}
({\it cf}. Remark~\ref{rem:FTC-coend-remark}). We also have $A^D \cong \overline{\Phi}(\mathcal{I}_D)$ by Lemma~\ref{lem:internal-Hom-coend}. One can check that these isomorphisms are in fact isomorphisms of algebras in $\mathcal{C}^{\env}$.  Thus, to prove the Radford $S^4$-formula \eqref{eq:Radford-S4}, we only have to apply $\Phi$ to \eqref{eq:alg-A-double-dual}.

\subsection{Faithfulness of adjoints}

We have showed that $L$ and $R$ are exact functors (Corollary~\ref{cor:induction-L-R-exact}). Here we discuss the faithfulness of $L$ and $R$. The following lemma, which is possibly well-known, is useful for our purpose:

\begin{lemma}
  \label{lem:faithful-exact}
  For an exact functor $F: \mathcal{A} \to \mathcal{B}$ between abelian categories, the following assertions are equivalent:
  \begin{enumerate}
  \item $F$ is faithful.
  \item $F$ reflects isomorphisms.
  \item $F$ reflects zero objects.
  \end{enumerate}
\end{lemma}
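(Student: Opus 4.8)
The plan is to prove the cycle of implications $(1) \Rightarrow (2) \Rightarrow (3) \Rightarrow (1)$. Throughout I rely on a few elementary facts about abelian categories: a morphism that is simultaneously a monomorphism and an epimorphism is an isomorphism; an object $Z$ with $\id_Z = 0$ is a zero object (since then $\id_Z$ factors through $0$, exhibiting $Z$ as a retract of $0$); and an exact functor between abelian categories is additive and preserves zero objects, because it preserves finite limits, hence terminal objects and finite products. I would record the last fact explicitly at the outset, since it is needed in the implication $(3) \Rightarrow (1)$.

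For $(1) \Rightarrow (2)$: assume $F$ faithful, and let $f : X \to Y$ in $\mathcal{A}$ be such that $F(f)$ is an isomorphism. Exactness of $F$ yields $F(\ker f) \cong \ker F(f) = 0$ and $F(\operatorname{coker} f) \cong \operatorname{coker} F(f) = 0$. A faithful functor reflects zero objects: if $F(Z) = 0$, then $F(\id_Z)$ and $F(0_Z)$ are both endomorphisms of the zero object $F(Z)$, hence equal, so $\id_Z = 0_Z$ and therefore $Z = 0$. Applying this to $\ker f$ and $\operatorname{coker} f$ gives $\ker f = 0$ and $\operatorname{coker} f = 0$, so $f$ is monic and epic, hence an isomorphism. (The same observation already yields $(1) \Rightarrow (3)$ directly.)

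For $(2) \Rightarrow (3)$: assume $F$ reflects isomorphisms and $F(Z) = 0$. Then the zero endomorphism $0_Z : Z \to Z$ is sent by $F$ to the zero endomorphism of $F(Z) = 0$, which is $\id_0$, an isomorphism; hence $0_Z$ is an isomorphism in $\mathcal{A}$, which forces $\id_Z = 0$ and so $Z = 0$. For $(3) \Rightarrow (1)$: since $F$ is additive, it suffices to show that $F(f) = 0$ implies $f = 0$ for a morphism $f : X \to Y$ in $\mathcal{A}$. Taking image factorizations and using exactness, $F(\operatorname{im} f) \cong \operatorname{im} F(f) = 0$, whence $\operatorname{im} f = 0$ by $(3)$, i.e.\ $f = 0$.

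I do not expect any genuine obstacle: the proof is a matter of assembling standard categorical facts. The only point that deserves a moment's attention is that $(3) \Rightarrow (1)$ really does use both the image factorization of an arbitrary morphism and the additivity of $F$ (to reduce faithfulness to the vanishing of $f$ from the vanishing of $F(f)$), so the additivity of an exact functor between abelian categories should be invoked consciously rather than silently.
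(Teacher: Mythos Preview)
Your proof is correct and uses essentially the same ingredients as the paper's: the observation that $F(\id_Z)=F(0_Z)$ when $F(Z)=0$, and the fact that an exact functor preserves kernels, cokernels, and images. The only difference is organizational---you prove the cycle $(1)\Rightarrow(2)\Rightarrow(3)\Rightarrow(1)$, whereas the paper shows $(1)\Rightarrow(3)$ and $(2)\Rightarrow(3)$ together and then $(3)\Rightarrow(1)$ and $(3)\Rightarrow(2)$ separately---but the underlying arguments are identical.
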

\begin{proof}
  For an object $M$ of an abelian category, we denote by $0_M$ the zero morphism on $M$. Let $X \in \mathcal{A}$ be an object such that $F(X) = 0$. Then we have
  \begin{equation*}
    F(0_X) = 0_{F(X)} = \id_{F(X)} = F(\id_X).
  \end{equation*}
  Thus, if (1) holds, then $0_X = \id_X$, and therefore $X = 0$. If (2) holds, then $0_X$ is an isomorphism since $\id_{F(X)}$ is, and therefore $X = 0$. Summarizing, we have proved that either of (1) or (2) implies (3).

  Now suppose that (3) holds. If $f$ is a morphism in $\mathcal{A}$ such that $F(f) = 0$, then we have $F(\mathrm{Im}(f)) = \mathrm{Im}(F(f)) = 0$ since $F$ is exact. Thus $\mathrm{Im}(f) = 0$ and therefore $f = 0$. This implies that (1) holds.

  Similarly, if $f$ is a morphism such that $F(f)$ is an isomorphism, then we have $F(\mathrm{Ker}(f)) = \mathrm{Ker}(F(f)) = 0$ and $F(\mathrm{Coker}(f)) = \mathrm{Coker}(F(f)) = 0$ since $F$ is exact. Thus we have $\mathrm{Ker}(f) = 0$ and $\mathrm{Coker}(f) = 0$ and therefore $f$ is an isomorphism. This implies that (2) holds. The proof is done.
\end{proof}

\begin{theorem}
  \label{thm:induc-faith}
  We decompose the unit object $\unitobj \in \mathcal{C}$ as $\unitobj = \unitobj_1 \oplus \dotsb \oplus \unitobj_m$ as in \eqref{eq:unit-decomp-1}, and suppose that $\End_{\mathcal{C}}(\unitobj_i) \cong k$ for all $i = 1, \dotsc, m$. Then the following assertions are equivalent:
  \begin{enumerate}
  \item $L$ is faithful.
  \item $R$ is faithful.
  \item The full subcategory $\mathcal{C}_{i j} := \unitobj_i \otimes \mathcal{C} \otimes \unitobj_j$ is zero whenever $i \ne j$.
  \end{enumerate}
\end{theorem}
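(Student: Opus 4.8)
The plan is to establish the cycle $(1)\Leftrightarrow(2)$, $(1)\Rightarrow(3)$, $(3)\Rightarrow(1)$. Throughout I use that $L$ and $R$ are exact (Corollary~\ref{cor:induction-L-R-exact}), so by Lemma~\ref{lem:faithful-exact} "faithful'' may be read as "reflects zero objects''; and that, under the identification $\mathcal{Z}(\mathcal{C})\cong{}_Z\mathcal{C}$ of \S\ref{subsec:central-Hopf-monad} (where $Z$ is the central Hopf monad), $U$ becomes the forgetful functor and $L$ corresponds to the free-module functor, so that $UL\cong Z$. Since $U$ is faithful this gives $L(V)=0\Leftrightarrow Z(V)=0$, hence $L$ is faithful if and only if $Z$ reflects zero objects. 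The equivalence $(1)\Leftrightarrow(2)$ is then immediate: Lemma~\ref{lem:induction-L-R-1} gives $R\cong L(D\otimes-)$ and $L\cong R(D^{*}\otimes-)$, and $D\otimes(-)$, $D^{*}\otimes(-)$ are auto-equivalences of $\mathcal{C}$ because $D$ is invertible, so $L$ is faithful if and only if $R$ is.

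For $(1)\Rightarrow(3)$ I argue contrapositively: suppose $\mathcal{C}_{pq}\neq 0$ for some $p\neq q$ and fix a nonzero $V\in\mathcal{C}_{pq}$. The key point is that the underlying object of every object of $\mathcal{Z}(\mathcal{C})$ is diagonal. Indeed, if $(M,\sigma)\in\mathcal{Z}(\mathcal{C})$ then $\sigma_{M}(\unitobj)=\id_{M}$ (a half-braiding with the unit is an idempotent isomorphism, hence the identity), so naturality of $\sigma_{M}$ in the idempotent $e_{i}\colon\unitobj\to\unitobj$ projecting onto the summand $\unitobj_{i}$ yields $e_{i}\otimes\id_{M}=\id_{M}\otimes e_{i}$ as endomorphisms of $M$; comparing the images of these two idempotents gives $\unitobj_{i}\otimes M=M\otimes\unitobj_{i}$ as subobjects of $M$, whence $M\otimes\unitobj_{i}=\unitobj_{i}\otimes M\otimes\unitobj_{i}\in\mathcal{C}_{ii}$ and $M=\bigoplus_{i}\unitobj_{i}\otimes M\otimes\unitobj_{i}$. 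Thus for every $Y\in\mathcal{Z}(\mathcal{C})$ the object $U(Y)$ has no direct summand in $\mathcal{C}_{pq}$, so $\Hom_{\mathcal{C}}(V,U(Y))=0$ by orthogonality of the component subcategories, and therefore $\Hom_{\mathcal{Z}(\mathcal{C})}(L(V),Y)\cong\Hom_{\mathcal{C}}(V,U(Y))=0$ for all $Y$; hence $L(V)=0\neq V$, so $L$ is not faithful.

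For $(3)\Rightarrow(1)$, assume $\mathcal{C}_{ij}=0$ whenever $i\neq j$. Then $\mathcal{C}$ decomposes as the direct product $\prod_{i=1}^{m}\mathcal{C}_{ii}$ of its component tensor subcategories; each $\mathcal{C}_{ii}$ is a finite tensor category whose unit $\unitobj_{i}$ satisfies $\End_{\mathcal{C}}(\unitobj_{i})\cong k$, hence is simple, and each satisfies the standing hypothesis \eqref{eq:C-env-assume} because $(\mathcal{C}_{ii})^{\env}=(\unitobj_{i}\boxtimes\unitobj_{i})\otimes\mathcal{C}^{\env}\otimes(\unitobj_{i}\boxtimes\unitobj_{i})$ is a component tensor subcategory of the finite tensor category $\mathcal{C}^{\env}$. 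Since the central Hopf monad $Z$ and its unit $\eta$ restrict to the analogous data on each $\mathcal{C}_{ii}$ (for $V\in\mathcal{C}_{ii}$ the functor $X\mapsto X^{*}\otimes V\otimes X$ factors through $\mathcal{C}_{ii}$), it suffices to treat the case in which $\unitobj\in\mathcal{C}$ is simple and show that the unit $\eta_{V}\colon V\to Z(V)$ is then a monomorphism for all $V$; this forces $Z(V)\neq 0$ whenever $V\neq 0$, so $Z$, and hence $L$, reflects zero objects.

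To prove the monic claim when $\unitobj$ is simple, recall from \S\ref{sec:main-thm} that $Z(V)=A\ogreaterthan V$ and $\eta_{V}=u\ogreaterthan V$, where $A=\iHom(\unitobj,\unitobj)\in\mathcal{C}^{\env}$ and $u=\iId_{\unitobj}\colon\unitobj\boxtimes\unitobj\to A$ is the unit of the algebra $A$. By Lemma~\ref{lem:Deligne-product}(4), $\End_{\mathcal{C}^{\env}}(\unitobj\boxtimes\unitobj)\cong\End_{\mathcal{C}}(\unitobj)\otimes_{k}\End_{\mathcal{C}}(\unitobj)\cong k$, so the unit object $\unitobj\boxtimes\unitobj$ of $\mathcal{C}^{\env}$ is simple; as $u$ is nonzero (it corresponds to $\id_{\unitobj}$ under the adjunction \eqref{eq:internal-Hom-def}), it is a nonzero morphism out of a simple object and hence a monomorphism. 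Finally, the functor $(-)\ogreaterthan V\colon\mathcal{C}^{\env}\to\mathcal{C}$ is exact: under the equivalence $K\colon\mathcal{C}\xrightarrow{\ \sim\ }(\mathcal{C}^{\env})_{A}$ of $\mathcal{C}^{\env}$-module categories of \eqref{eq:Hopf-module-equiv} it is carried to $(-)\otimes K(V)$ on $(\mathcal{C}^{\env})_{A}$, which is exact because tensoring by a fixed object of the finite tensor category $\mathcal{C}^{\env}$ is exact and the forgetful functor $(\mathcal{C}^{\env})_{A}\to\mathcal{C}^{\env}$ reflects exactness. Hence $\eta_{V}=u\ogreaterthan V$ is a monomorphism. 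The step I expect to be the main obstacle is this last one — establishing exactness of $(-)\ogreaterthan V$ and confirming that $\eta_{V}$ is literally $u\ogreaterthan V$ with $u$ the unit of $A$ — since it is precisely here that simplicity of the unit enters; by comparison, the diagonality of center objects used in $(1)\Rightarrow(3)$ and the bookkeeping with component subcategories are routine.
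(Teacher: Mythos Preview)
Your proof is correct, but the route differs from the paper's in both non-trivial implications. For $(1)\Rightarrow(3)$ the paper works entirely inside $\mathcal{C}^{\env}$: it replaces $L$ by the composite $L'(V)=A\otimes(V\boxtimes\unitobj)\otimes A$ (via the diagram~\eqref{eq:ZC-and-A-bimod}) and uses the identity $A\otimes(\unitobj_i\boxtimes\unitobj)\cong A\otimes(\unitobj\boxtimes\unitobj_i)$ coming from~\eqref{eq:C-env-action-2} to force $L'(V)=0$ when $V\in\mathcal{C}_{ij}$, $i\ne j$. Your argument instead shows that every object of $\mathcal{Z}(\mathcal{C})$ is diagonal, which is more conceptual and, incidentally, requires neither the hypothesis $\End_{\mathcal{C}}(\unitobj_i)\cong k$ nor the assumption~\eqref{eq:C-env-assume}. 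For $(3)\Rightarrow(1)$ the paper again stays with $L'$ and shows $L'(V)\ne 0$ for nonzero $V\in\mathcal{C}_{ii}$ by isolating the $i$-th block $A_i\otimes(V\boxtimes\unitobj_i)\otimes A_i$ and invoking the length inequality~\eqref{eq:len-ineq} in the tensor subcategory $\mathcal{C}_{ii}\boxtimes\mathcal{C}_{ii}^{\rev}$ of $\mathcal{C}^{\env}$; your alternative --- that $\eta_V=u\ogreaterthan V$ is monic because $u$ is a nonzero map out of a simple object and $(-)\ogreaterthan V$ is exact via the equivalence~\eqref{eq:Hopf-module-equiv} --- is slicker once the simple-unit reduction is made, though that reduction (the claim that $Z$ restricts to the central Hopf monad on each $\mathcal{C}_{ii}$) deserves a line more justification than ``factors through $\mathcal{C}_{ii}$'': one should note that for $V\in\mathcal{C}_{ii}$ and $X=\bigoplus_j X_j$ with $X_j\in\mathcal{C}_{jj}$ one has $X^*\otimes V\otimes X=X_i^*\otimes V\otimes X_i$, so dinaturality in the inclusions $X_i\hookrightarrow X$ identifies the two coends. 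The paper's route avoids this reduction by computing blockwise directly.
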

\begin{proof}
  The equivalence (1) $\Leftrightarrow$ (2) follows from $L \cong R^!$. Now let $A$ be the algebra in $\mathcal{C}^{\env}$ used to define the distinguished invertible object. By the argument in the proof of Lemma~\ref{lem:induction-L-R-1}, the faithfulness of $L$ is equivalent to the faithfulness of
  \begin{equation*}
    L': \mathcal{C} \to \mathcal{C}^{\env},
    \quad V \mapsto A \otimes (V \boxtimes \unitobj) \otimes A.
  \end{equation*}
  To show $(1) \Rightarrow (3)$, we compute, for $V \in \mathcal{C}_{i j}$ with $i \ne j$,
  \begin{align*}
    L'(V) & \cong A \otimes (\unitobj_i \boxtimes \unitobj)
    \otimes (V \boxtimes \unitobj) \otimes (\unitobj_j \boxtimes \unitobj) \otimes A \\
    & \cong A \otimes (\unitobj \boxtimes \unitobj_i)
    \otimes (V \boxtimes \unitobj) \otimes (\unitobj \boxtimes \unitobj_j) \otimes A
    = 0
  \end{align*}
  by~\eqref{eq:unit-decomp-2} and~\eqref{eq:C-env-action-2}. Thus, by Lemma~\ref{lem:faithful-exact}, $L'$ cannot be faithful if $\mathcal{C}_{i j} \ne 0$ for some $i \ne j$. In other words, (1) implies (3).

  Now we suppose that (3) holds. Then $\mathcal{C} = \mathcal{C}_{1 1} \oplus \dotsb \oplus \mathcal{C}_{m m}$. By Lemma~\ref{lem:faithful-exact}, it is sufficient to show that $L'(V) \ne 0$ for every non-zero `homogeneous' object $V \in \mathcal{C}_{i i}$ to show (1).  In a similar way as above, we compute, for $V \in \mathcal{C}_{i i}$,
  \begin{equation}
    \label{eq:induc-faith-pf-1}
    (\unitobj_i \boxtimes \unitobj_i)
    \otimes L'(V) \otimes (\unitobj_i \boxtimes \unitobj_i)
    \cong A_i \otimes (V \boxtimes \unitobj_i) \otimes A_i,
  \end{equation}
  where $A_i = (\unitobj_i \boxtimes \unitobj_i) \otimes A \otimes (\unitobj_i \boxtimes \unitobj_i)$. The object $A_i$ is non-zero, since
  \begin{equation*}
    \Hom_{\mathcal{C}^{\env}}(\unitobj \boxtimes \unitobj, A_i)
    \cong \Hom_{\mathcal{C}^{\env}}(\unitobj_i \boxtimes \unitobj_i, A)
    \cong \Hom_{\mathcal{C}}(\unitobj_i, \unitobj) \ne 0.
  \end{equation*}
  Note that every tensorand of the right-hand side of~\eqref{eq:induc-faith-pf-1} is an object of the tensor full subcategory $\mathcal{D}_i := \mathcal{C}_{i i} \boxtimes \mathcal{C}_{i i}^{\rev}$ of $\mathcal{C}^{\env}$. Since
  \begin{equation*}
    \End_{\mathcal{D}_i}(\unitobj_i \boxtimes \unitobj_i)
    \cong \End_{\mathcal{C}}(\unitobj_i) \otimes_k \End_{\mathcal{C}}(\unitobj_i) \cong k \otimes_k k \cong k,
  \end{equation*}
  the unit object of $\mathcal{D}_i$ is simple. Thus, by \eqref{eq:len-ineq}, the right-hand side of \eqref{eq:induc-faith-pf-1} is a non-zero object whenever $V \ne 0$.
\end{proof}

\begin{corollary}
  \label{cor:induc-L-R-faith}
  Both $L$ and $R$ are faithful if $\End_{\mathcal{C}}(\unitobj) \cong k$.
\end{corollary}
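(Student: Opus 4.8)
The plan is to derive this immediately from Theorem~\ref{thm:induc-faith}. The key point is that the hypothesis $\End_{\mathcal{C}}(\unitobj) \cong k$ forces the unit object of $\mathcal{C}$ to be simple (this is the elementary lemma recorded in \S\ref{subsec:FTC}), so the decomposition \eqref{eq:unit-decomp-1} of $\unitobj$ into pairwise non-isomorphic simple objects has exactly one summand; that is, $m = 1$ and $\unitobj = \unitobj_1$.

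With this in hand I would check that the standing hypothesis of Theorem~\ref{thm:induc-faith} is met: since $m = 1$, the requirement ``$\End_{\mathcal{C}}(\unitobj_i) \cong k$ for all $i$'' reduces to $\End_{\mathcal{C}}(\unitobj_1) = \End_{\mathcal{C}}(\unitobj) \cong k$, which is precisely the assumption. Moreover, condition (3) of that theorem --- that $\mathcal{C}_{ij} = 0$ whenever $i \ne j$ --- is vacuously satisfied, because the index set $\{1, \dots, m\}$ is the singleton $\{1\}$ and contains no pair $i \ne j$.

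It then remains only to invoke the implications $(3) \Rightarrow (1)$ and $(3) \Rightarrow (2)$ of Theorem~\ref{thm:induc-faith} to conclude that $L$ and $R$ are both faithful. There is no genuine obstacle here: the whole content of the corollary is the observation that the ``off-diagonal vanishing'' condition (3) of the theorem becomes automatic as soon as $\unitobj$ is simple, and the passage from $\End_{\mathcal{C}}(\unitobj) \cong k$ to $m = 1$ is exactly the cited lemma.
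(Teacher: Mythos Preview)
Your proposal is correct and is exactly the intended derivation: the paper states the corollary immediately after Theorem~\ref{thm:induc-faith} with no proof, precisely because $\End_{\mathcal{C}}(\unitobj)\cong k$ forces $m=1$ (via the lemma in \S\ref{subsec:FTC}), making condition~(3) vacuous and the hypothesis of the theorem trivially satisfied.
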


\subsection{The distinguished invertible object as an end}

Using the equivalence $\Phi$ given by~\eqref{eq:Lex-C-equiv-1}, we also obtain the following formula of the distinguished invertible object:

\begin{lemma}
  \label{lem:end-formula}
  $\displaystyle D \cong \int_{X \in \mathcal{C}} \Hom_{\mathcal{C}}(X, \unitobj) \cdot X$.
\end{lemma}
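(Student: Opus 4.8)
The plan is to read the formula off the monoidal equivalence $\Phi\colon\mathcal{C}^{\env}\to\LEX(\mathcal{C})$ of~\eqref{eq:Lex-C-equiv-1}, by applying it to the algebra $A=\iHom(\unitobj,\unitobj)$ and to its left dual and comparing two computations of $\Phi(A^{*})$. Recall from Lemma~\ref{lem:internal-Hom-coend} (with $V=W=\unitobj$) that $A=\overline{\Phi}(\id_{\mathcal{C}})=\int^{X}X\boxtimes{}^{*}X$, so $\Phi(A)\cong\id_{\mathcal{C}}$, and that $\iHom(\unitobj,D)=\overline{\Phi}(D\otimes(-))$. By Lemma~\ref{lem:int-Hom-dual} (with $V=X=\unitobj$) we have $A^{*}=\iHom(\unitobj,\unitobj)^{*}\cong\iHom(\unitobj,D)$, hence
\begin{equation*}
  \Phi(A^{*})\;\cong\;D\otimes(-)
\end{equation*}
as left exact endofunctors of $\mathcal{C}$.

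For the second computation I would dualize the coend defining $A$. As $(-)^{*}$ is an anti-autoequivalence of $\mathcal{C}^{\env}$, it sends the coend $A=\int^{X}X\boxtimes{}^{*}X$ to the end $A^{*}\cong\int_{X}(X\boxtimes{}^{*}X)^{*}=\int_{X}X^{*}\boxtimes{}^{**}X$, and the equivalence $\Phi$ preserves this end. From the defining formula of $\Phi$ together with $({}^{**}X)^{*}={}^{*}X$ one gets $\Phi(X^{*}\boxtimes{}^{**}X)=\Hom_{\mathcal{C}}({}^{*}X,-)\cdot X^{*}$, so that $\Phi(A^{*})\cong\int_{X}\Hom_{\mathcal{C}}({}^{*}X,-)\cdot X^{*}$. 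Comparing with the first computation and evaluating at $\unitobj$ (ends of left exact endofunctors are computed objectwise) yields $D\cong\int_{X}\Hom_{\mathcal{C}}({}^{*}X,\unitobj)\cdot X^{*}$. Reindexing the dummy variable along the anti-autoequivalence ${}^{*}(-)$ in the manner of Remark~\ref{rem:FTC-coend-remark} rewrites this as $\int_{X}\Hom_{\mathcal{C}}({}^{**}X,\unitobj)\cdot X$, and the natural isomorphism
\begin{equation*}
  \Hom_{\mathcal{C}}({}^{**}X,\unitobj)\cong\Hom_{\mathcal{C}}(\unitobj,{}^{*}X)\cong\Hom_{\mathcal{C}}(X,\unitobj)
\end{equation*}
--- which uses only $\unitobj^{*}=\unitobj$, $({}^{*}V)^{*}=V={}^{*}(V^{*})$ and the fullness and faithfulness of the duality functors --- identifies it with $\int_{X}\Hom_{\mathcal{C}}(X,\unitobj)\cdot X$, as desired.

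The only genuinely delicate part is this last bookkeeping with left and right duals; in particular the reindexing must be performed \emph{after} specializing to $\unitobj$, since the simplification of $\Hom_{\mathcal{C}}({}^{*}X,-)$ rests on $\unitobj^{*}\cong\unitobj$ and has no counterpart for a general argument object. That dualizing a coend produces an end, and that $\Phi$ is compatible with it, are routine.
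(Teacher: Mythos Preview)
Your proof is correct and follows essentially the same route as the paper's. Both arguments identify $\Phi(A^{*})\cong D\otimes(-)$ via Lemma~\ref{lem:int-Hom-dual}, express $A^{*}$ as an end by dualizing the coend formula for $A$, apply $\Phi$ and evaluate at~$\unitobj$, and then simplify the remaining double duals using $\unitobj^{*}=\unitobj$ and $({}^{*}V)^{*}=V$. The only cosmetic difference is that the paper reindexes \emph{before} dualizing (taking $\{(i_{{}^{*}X})^{*}\}$ as the universal end cone, which gives $A^{*}\cong\int_{X}X\boxtimes{}^{***}X$ directly), whereas you dualize first to $\int_{X}X^{*}\boxtimes{}^{**}X$ and reindex afterwards; both orderings are valid in the spirit of Remark~\ref{rem:FTC-coend-remark}.
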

\begin{proof}
  Let $i_X: X \boxtimes {}^* \! X \to A$ be the universal dinatural transformation. Since the duality is an anti-equivalence, the family $\{ (i_{{}^* \! X})^* \}$ is an end. Symbolically,
  \begin{equation*}
    A^*
    \cong \int_{X \in \mathcal{C}} ({}^*X \boxtimes {}^{**} \! X)^*
    \cong \int_{X \in \mathcal{C}} X \boxtimes {}^{***} \! X.
  \end{equation*}
  By Lemma~\ref{lem:int-Hom-dual}, $A^* \cong \iHom(\unitobj, D) \cong \overline{\Phi}(D \otimes (-))$. Hence,
  \begin{equation*}
    D \cong \Phi(A^*)(\unitobj)
    \cong \int_{X \in \mathcal{C}} \Hom_{\mathcal{C}}({}^{**}X, \unitobj) \cdot X
    \cong \int_{X \in \mathcal{C}} \Hom_{\mathcal{C}}(X, \unitobj) \cdot X. \qedhere
  \end{equation*}
\end{proof}

The above lemma yields another proof of \cite[Theorem 6.1]{MR2097289}:

\begin{lemma}
  \label{lem:soc-P-unit}
  If the unit object $\unitobj \in \mathcal{C}$ is a simple object, then $D$ is isomorphic to the socle of the projective cover of $\unitobj$.
\end{lemma}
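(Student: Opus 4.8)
The plan is to compute $\Hom_{\mathcal{C}}(V,D)$ for every $V\in\mathcal{C}$ out of the end formula of Lemma~\ref{lem:end-formula}, to recognize it as $\Hom_{\mathcal{C}}(V,\socle(P_0))$ (where $P_0$ denotes the projective cover of $\unitobj$), and then to conclude by the Yoneda lemma. Since $\Hom_{\mathcal{C}}(V,-)$ preserves ends and $\Hom_{\mathcal{C}}(V,W\cdot X)\cong W\otimes_k\Hom_{\mathcal{C}}(V,X)$ for finite-dimensional $W$, Lemma~\ref{lem:end-formula} gives
\begin{equation*}
  \Hom_{\mathcal{C}}(V,D)\cong\int_{X\in\mathcal{C}}\Hom_{\mathcal{C}}(X,\unitobj)\otimes_k\Hom_{\mathcal{C}}(V,X)\cong\NAT\big(\Hom_{\mathcal{C}}(-,\unitobj)^{*},\Hom_{\mathcal{C}}(V,-)\big),
\end{equation*}
where $(-)^{*}$ is the $k$-linear dual and the second isomorphism uses finite-dimensionality of $\Hom_{\mathcal{C}}(X,\unitobj)$ together with \eqref{eq:end-nat}.

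Next I would pass to a model $\mathcal{C}\simeq\mathbf{mod}\text{-}A$ with $A$ a basic finite-dimensional $k$-algebra. The functor $\ell:=\Hom_{\mathcal{C}}(-,\unitobj)^{*}\colon\mathcal{C}\to\mathbf{mod}\text{-}k$ is $k$-linear and right exact, so by the Eilenberg--Watts theorem (Lemma~\ref{lem:EW-thm}) $\ell\cong(-)\otimes_A M$ with $M=\ell(A)$; a short computation of the induced $A$-action identifies $M$ with the simple left $A$-module $\Hom_k(\unitobj,k)$. Feeding a projective presentation $A^{\oplus q}\to A^{\oplus p}\to M\to0$ of left $A$-modules into the cokernel description $\ell\cong\mathrm{coker}\big(\Hom_{\mathcal{C}}(A^{\oplus q},-)\to\Hom_{\mathcal{C}}(A^{\oplus p},-)\big)$ and applying $\NAT(-,\Hom_{\mathcal{C}}(V,-))$, the Yoneda lemma turns this into a kernel and yields a natural isomorphism $\Hom_{\mathcal{C}}(V,D)\cong\Hom_{\mathcal{C}}(V,\ker\beta)$, where $\beta\colon A^{\oplus p}\to A^{\oplus q}$ is the morphism of right $A$-modules obtained by applying $\Hom_A(-,{}_AA)$ to the presentation of $M$. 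Hence $D\cong\ker\beta\cong\Hom_A(M,{}_AA)$ as right $A$-modules, the last step by left exactness of $\Hom_A(-,{}_AA)$.

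Finally I would identify $\Hom_A(M,{}_AA)$ with $\socle(P_0)$. A finite tensor category is quasi-Frobenius as an abelian category, so, $A$ being basic, $A$ is in fact a Frobenius algebra; consequently $\socle({}_AA)=\socle(A_A)$ as two-sided ideals (both are the orthogonal complement of $\mathrm{rad}(A)$ for a Frobenius form, and the left and right complements agree because $\lambda(rx)=\lambda(x\,\nu(r))$ and the Nakayama automorphism $\nu$ preserves $\mathrm{rad}(A)$). Writing $P_0=e_0A$ for a primitive idempotent $e_0$, one has $M\cong\mathrm{top}({}_AAe_0)$, hence $\Hom_A(M,{}_AA)\cong e_0A\cap\socle({}_AA)=e_0A\cap\socle(A_A)=\socle(e_0A)=\socle(P_0)$, naturally in $V$ as right $A$-modules. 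By the Yoneda lemma $D\cong\socle(P_0)$; in particular $\socle(P_0)$ is invertible, hence simple, which is consistent with $P_0$ being injective.

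The main obstacle is the second step: getting from the end formula to the clean module-theoretic description $D\cong\Hom_A(M,{}_AA)$ requires running the Eilenberg--Watts and Yoneda manipulations while carefully tracking the left/right $A$-module structures and the $k$-duals involved. One should also note that ``$\unitobj$ simple'' does not force $\End_{\mathcal{C}}(\unitobj)\cong k$ when $k$ is not algebraically closed, so the Eilenberg--Watts and Yoneda arguments must be carried out over the division ring $\End_{\mathcal{C}}(\unitobj)$. Once $D\cong\Hom_A(M,{}_AA)$ is in hand, the third step is elementary, its only real input being that finite tensor categories are quasi-Frobenius and the Frobenius-algebra fact $\socle({}_AA)=\socle(A_A)$.
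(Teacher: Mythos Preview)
Your argument is correct and takes a genuinely different route from the paper's. The paper works directly with the universal dinatural transformation $\pi: D \dinatto B$, where $B(X,Y)=\Hom_{\mathcal{C}}(X,\unitobj)\cdot Y$: it shows that if all the components $\pi_{P_i}$ at indecomposable projectives vanished then every $\pi_X$ would vanish (forcing $D=0$), so some $\pi_{P_i}\neq 0$; since $B(P_i,P_i)=0$ for $i\neq 0$ this forces $\pi_{P_0}\neq 0$, giving an embedding of the simple object $D$ into $P_0$, hence into $\socle(P_0)$, which is simple by \cite{MR2119143}.

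Your approach instead computes $\Hom_{\mathcal{C}}(V,D)$ representably, passes to $\mbox{{\sf mod}-}A$, and arrives at the closed formula $D\cong\Hom_{A\text{-mod}}(\Hom_k(\unitobj,k),{}_AA)$, which you then identify with $\socle(P_0)$ via Frobenius-algebra facts. This is more computational but yields an explicit module-theoretic description of $D$ that the paper's argument does not. Both proofs ultimately lean on \cite{MR2119143}: the paper for simplicity of $\socle(P_0)$, you for the quasi-Frobenius property (hence $A$ basic self-injective $\Rightarrow$ Frobenius $\Rightarrow$ $\socle({}_AA)=\socle(A_A)$).

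Two small remarks. First, your cokernel presentation $\ell\cong\mathrm{coker}\bigl(\Hom_{\mathcal{C}}(A^{\oplus q},-)\to\Hom_{\mathcal{C}}(A^{\oplus p},-)\bigr)$ conflates the left $A$-modules in the presentation of $M$ with objects of $\mathcal{C}=\mbox{{\sf mod}-}A$; it is cleaner (and avoids tracking a transpose) to bypass the presentation entirely and use the standard identification
\[
\NAT\bigl((-)\otimes_A M,\,G\bigr)\;\cong\;\Hom_{A\text{-mod}}\bigl(M,\,G(A)\bigr),
\]
then swap the two Hom's to get $\Hom_{\mathcal{C}}(V,D)\cong\Hom_{\mbox{{\sf mod}-}A}\bigl(V,\Hom_{A\text{-mod}}(M,{}_AA)\bigr)$. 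Second, your concern about $\End_{\mathcal{C}}(\unitobj)\neq k$ is not really an obstacle: all of the Eilenberg--Watts and Yoneda manipulations are $k$-linear and go through verbatim; the field $\End_{\mathcal{C}}(\unitobj)$ only appears implicitly as the endomorphism ring of the simple $A$-module $\unitobj$ and plays no special role.
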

\begin{proof}
  Set $B(X, Y) = \Hom_{\mathcal{C}}(X, \unitobj) \cdot Y$. We denote by $\pi: D \dinatto B$ the universal dinatural transformation of the end. For $X_1, X_2 \in \mathcal{C}$, we have
  \begin{align*}
    \pi_{X_1 \oplus X_2}
    & = B(\id_{X_1 \oplus X_2}, i_1 p_1) \circ \pi_{X_1 \oplus X_2} + B(\id_{X_1 \oplus X_2}, i_2 p_2) \circ \pi_{X_1 \oplus X_2} \\
    & = B(p_1, i_1) \circ \pi_{X_1} + B(p_2, i_2) \circ \pi_{X_2},
  \end{align*}
  where $i_r: X_r \to X_1 \oplus X_2$ and $p_r: X_1 \oplus X_2 \to X_r$ ($r = 1, 2$) are the inclusion and the projection, respectively. Hence, if $\pi_{X_1} = 0$ and $\pi_{X_2} = 0$, then $\pi_{X_1 \oplus X_2} = 0$.

  Now let $V_0 = \unitobj, V_1, \dotsc, V_n$ be a complete set of representatives of the isomorphism classes of simple objects of $\mathcal{C}$, and let $P_i$ be the projective cover of $V_i$. Suppose that $\pi_{P_i} = 0$ for all $i$. Then, by the above argument, $\pi_P = 0$ for all projective object $P \in \mathcal{C}$. For each $X \in \mathcal{C}$, there are a projective object $P \in \mathcal{C}$ and an epimorphism $f: P \to X$. Since $B(f, X)$ is monic and $B(f, \id_X) \circ \pi_X = B(\id_P, f) \circ \pi_P = 0$, we have $\pi_X = 0$. Hence $D = 0$ by the universal property. This contradicts to the fact that $D$ is a simple object.

  By the above argument, $\pi_{P_i} \ne 0$ for some $i$. Since $B(P_i, P_i) = 0$ for $i \ne 0$, the morphism $\pi_{P_0}$ must be non-zero. Since $D$ is a simple object, the morphism $\pi_0$ induces a monomorphism from $D$ to the socle $S$ of $P_0$. On the other hand, it is known that $S$ is simple \cite{MR2119143}. Thus $D \cong S$.
\end{proof}

We consider the case where $\unitobj \in \mathcal{C}$ may not be a simple object. As in \eqref{eq:unit-decomp-1}, we decompose $\unitobj \in \mathcal{C}$ as $\unitobj = \unitobj_1 \oplus \dotsb \oplus \unitobj_m$, and define $\mathcal{C} _{i j} \subset \mathcal{C}$ as before. Note that each $\mathcal{C}_{i i}$ is a finite tensor category. Since $\mathcal{C}_{i i} \boxtimes \mathcal{C}_{i i}^{\env}$ is a tensor full subcategory of $\mathcal{C}^{\env}$, it is also a finite tensor category, and therefore the distinguished invertible object of $\mathcal{C}_{i i}$ is defined. We denote it by $D_i \in \mathcal{C}_{i i}$. The following theorem and Corollary~\ref{cor:ss-FTC-unimo} below have been conjectured in Remark 4.3.7 of \cite{2013arXiv1312.7188D}.

\begin{theorem}
  \label{thm:D-decomposition}
  $D \cong D_1 \oplus \dotsb \oplus D_m$.
\end{theorem}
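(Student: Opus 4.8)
The plan is to derive the decomposition from Lemma~\ref{lem:end-formula}, which gives $D \cong \int_{X \in \mathcal{C}} \Hom_{\mathcal{C}}(X, \unitobj) \cdot X$. Since $\unitobj = \unitobj_1 \oplus \dotsb \oplus \unitobj_m$ by \eqref{eq:unit-decomp-1}, we have $\Hom_{\mathcal{C}}(X, \unitobj) \cong \bigoplus_{i = 1}^m \Hom_{\mathcal{C}}(X, \unitobj_i)$; as $V \cdot (-)$ preserves finite direct sums and ends commute with finite direct sums, $D \cong \bigoplus_{i = 1}^m \int_{X \in \mathcal{C}} \Hom_{\mathcal{C}}(X, \unitobj_i) \cdot X$. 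So it suffices to show $\int_{X \in \mathcal{C}} \Hom_{\mathcal{C}}(X, \unitobj_i) \cdot X \cong D_i$ for each $i$. Note that Lemma~\ref{lem:end-formula} applies to $\mathcal{C}_{ii}$: its enveloping category $\mathcal{C}_{ii}^{\env} = \mathcal{C}_{ii} \boxtimes \mathcal{C}_{ii}^{\rev}$ is a tensor full subcategory of $\mathcal{C}^{\env}$ and hence a finite tensor category, so $D_i \cong \int_{Y \in \mathcal{C}_{ii}} \Hom_{\mathcal{C}_{ii}}(Y, \unitobj_i) \cdot Y$.

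I would first record the block structure of $\mathcal{C}$. Using \eqref{eq:unit-decomp-2} and the adjunctions $\unitobj_p \otimes (-) \dashv {}^*\unitobj_p \otimes (-)$ and $(-) \otimes \unitobj_q \dashv (-) \otimes \unitobj_q^*$, one computes $\Hom_{\mathcal{C}}(X, \unitobj_i) \cong \Hom_{\mathcal{C}}(X, \unitobj_p \otimes \unitobj_i \otimes \unitobj_q)$ for $X \in \mathcal{C}_{pq}$, which is $0$ unless $p = q = i$; consequently, for a general $X \cong \bigoplus_{p,q} X_{pq}$ with $X_{pq}$ the $(p,q)$-block $\unitobj_p \otimes X \otimes \unitobj_q$, we have $\Hom_{\mathcal{C}}(X, \unitobj_i) \cong \Hom_{\mathcal{C}_{ii}}(X_{ii}, \unitobj_i)$. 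Since $V \cdot (-)$ preserves finite direct sums,
\begin{equation*}
  \Hom_{\mathcal{C}}(X, \unitobj_i) \cdot X \cong \bigl( \Hom_{\mathcal{C}_{ii}}(X_{ii}, \unitobj_i) \cdot X_{ii} \bigr) \oplus \bigoplus_{(r,s) \ne (i,i)} \bigl( \Hom_{\mathcal{C}_{ii}}(X_{ii}, \unitobj_i) \cdot X_{rs} \bigr),
\end{equation*}
and I would compute the end of each summand separately.

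For $(r,s) \ne (i,i)$ the end vanishes by a direct dinaturality argument: if $\{ p_X \}$ is the universal dinatural family of the end, evaluate the wedge condition on the idempotent $e_{rs}^X \in \End_{\mathcal{C}}(X)$ projecting onto $X_{rs}$; one side becomes the identity of $\Hom_{\mathcal{C}_{ii}}(X_{ii}, \unitobj_i) \cdot X_{rs}$ (since $e_{rs}^X$ restricts to $\id_{X_{rs}}$), while the other is multiplication by $\Hom_{\mathcal{C}_{ii}}(\unitobj_i \otimes e_{rs}^X \otimes \unitobj_i, \unitobj_i) = 0$ (as $\unitobj_i \otimes \mathcal{C}_{rs} \otimes \unitobj_i = 0$), so each $p_X = 0$ and the end is a zero object. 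For the remaining summand, $\int_{X \in \mathcal{C}} \Hom_{\mathcal{C}_{ii}}(X_{ii}, \unitobj_i) \cdot X_{ii}$ is the end of the functor $\bigl( \Hom_{\mathcal{C}_{ii}}(-, \unitobj_i) \cdot (-) \bigr) \circ (E_i^{\op} \times E_i)$, where $E_i \colon \mathcal{C} \to \mathcal{C}_{ii}$, $X \mapsto \unitobj_i \otimes X \otimes \unitobj_i$; since $E_i$ is essentially surjective and the inclusion $\mathcal{C}_{ii} \hookrightarrow \mathcal{C}$ is both a left and a right adjoint of $E_i$ (ultimately because $\unitobj_i^* \cong \unitobj_i \cong {}^*\unitobj_i$), restriction along $\mathcal{C}_{ii} \hookrightarrow \mathcal{C}$ and reindexing along $E_i$ give mutually inverse bijections between the respective dinatural families, so this end is $\int_{Y \in \mathcal{C}_{ii}} \Hom_{\mathcal{C}_{ii}}(Y, \unitobj_i) \cdot Y = D_i$. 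Assembling the pieces gives $\int_{X \in \mathcal{C}} \Hom_{\mathcal{C}}(X, \unitobj_i) \cdot X \cong D_i$, whence $D \cong D_1 \oplus \dotsb \oplus D_m$. The only point needing care is this last identification of ends along $E_i$; if one prefers, $\mathcal{C}$ decomposes as the product $\prod_{p,q} \mathcal{C}_{pq}$ of $k$-linear categories, and the Fubini theorem for ends together with standard vanishing facts for ends of identity-type bifunctors over nonzero additive categories yields the same conclusion.
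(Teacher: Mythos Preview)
Your argument is correct and takes a genuinely different route from the paper's. The paper first shows, using invertibility of $D$ together with the Radford $S^4$-formula~\eqref{eq:Radford-S4}, that $D$ decomposes as $D'_1 \oplus \dotsb \oplus D'_m$ with each $D'_i \in \mathcal{C}_{ii}$ invertible; only then does it invoke Lemma~\ref{lem:end-formula}, producing a canonical comparison map $D \to D_i$ for each $i$ and concluding $D'_i \cong D_i$ via Schur's lemma. Your approach bypasses both the Radford formula and Schur's lemma entirely: you work directly with the end expression for $D$, split it along the decomposition of~$\unitobj$, kill the off-diagonal summands by a clean dinaturality argument with the block idempotents, and then identify the surviving diagonal piece with $D_i$ by reindexing the end along the retraction $E_i \dashv \iota \dashv E_i$. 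This is more elementary and more self-contained---in particular it does not depend on \S\ref{subsec:radford-s4}---while the paper's route has the side benefit of making explicit, before any comparison with $D_i$, that each diagonal block of $D$ is itself an invertible object of $\mathcal{C}_{ii}$. The reindexing step you flag as ``needing care'' is indeed the only nontrivial point, and your justification (that $p_X = p_{\iota E_i X}$ follows from dinaturality at the inclusion $X_{ii} \hookrightarrow X$, since $E_i$ sends that inclusion to the identity) is sound; the Fubini alternative you sketch at the end is less transparent and could be dropped.
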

\begin{proof}
  We decompose $D$ as $D = \bigoplus_{i j} a_{i j}$ ($a_{i j} \in \mathcal{C}_{i j}$). Since $D$ is invertible,
  \begin{equation*}
    \unitobj_1 \oplus \dotsb \oplus \unitobj_m
    \cong \unitobj
    \cong D \otimes D^*
    \cong \bigoplus_{i, j, p} a_{i p} \otimes a_{j p}^*.
  \end{equation*}
  From this, we see that there exists a permutation $\sigma$ on $\{ 1, \dotsc, m \}$ such that
  \begin{equation*}
    a_{i, \sigma(i)} \otimes a_{i, \sigma(i)}^* \cong \unitobj_i
    \quad \text{and} \quad
    a_{i j} = 0 \quad (j \ne \sigma(i)).
  \end{equation*}
  By the Radford $S^4$-formula \eqref{eq:Radford-S4}, we also have
  \begin{equation*}
    a_{i, \sigma(i)}
    \cong D \otimes \unitobj_{\sigma(i)}
    \cong \unitobj_{\sigma(i)}^{****} \otimes D
    \cong \unitobj_{\sigma(i)} \otimes D
    \cong a_{\sigma(i), \sigma^2(i)}
  \end{equation*}
  for $i = 1, \dotsc, m$. Thus $\sigma$ must be the trivial permutation. In conclusion,
  \begin{equation*}
    D = D'_1 \oplus \dotsb \oplus D_m'
  \end{equation*}
  for some invertible object $D'_i \in \mathcal{C}_{i i}$.

  Now we show $D_i' \cong D_i$ for $i = 1, \dotsc, m$. By Lemma~\ref{lem:end-formula}, we have
  \begin{equation*}
    D_i \cong \int_{X \in \mathcal{C}_{i i}} \Hom_{\mathcal{C}_{ii}}(X, \unitobj_i) \cdot X
    \cong \int_{X \in \mathcal{C}_{i i}} \Hom_{\mathcal{C}}(X, \unitobj) \cdot X.
  \end{equation*}
  By the universal property, there exists a morphism $\phi_i: D \to D_i$ in $\mathcal{C}$ compatible with the coend structures. If $\phi_i = 0$, then we would obtain $D_i = 0$ by a similar argument as in the proof of Lemma~\ref{lem:soc-P-unit}. Thus $\phi_i \ne 0$. Since $D_i \in \mathcal{C}_{i i}$, the morphism $\phi_i$ induces a non-zero morphism $D_i' \to D_i$. Since $D_i$ and $D_i'$ are invertible objects of $\mathcal{C}_{i i}$, they are simple. Hence, by Schur's lemma, $D_i \cong D_i'$.
\end{proof}

The following corollary is a combination of Lemma~\ref{lem:soc-P-unit} and Theorem~\ref{thm:D-decomposition}:

\begin{corollary}
  For a finite tensor category $\mathcal{C}$ with property~\eqref{eq:C-env-assume}, the distinguished invertible object of $\mathcal{C}$ is isomorphic to the socle of the projective cover of the unit object of $\mathcal{C}$.
\end{corollary}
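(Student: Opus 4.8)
The plan is to reduce to the case of a simple unit object via the block decomposition of $\mathcal{C}$ and then combine Lemma~\ref{lem:soc-P-unit} with Theorem~\ref{thm:D-decomposition}. First I would record that, decomposing $\unitobj = \unitobj_1 \oplus \dotsb \oplus \unitobj_m$ as in \eqref{eq:unit-decomp-1}, one has a block decomposition $\mathcal{C} = \bigoplus_{i,j=1}^{m} \mathcal{C}_{ij}$ of $k$-linear abelian categories: every $X \in \mathcal{C}$ splits as $X \cong \bigoplus_{i,j} \unitobj_i \otimes X \otimes \unitobj_j$, and there is no nonzero morphism between distinct blocks since $\unitobj_p \otimes \unitobj_i = 0$ for $p \ne i$ (and likewise on the right) by \eqref{eq:unit-decomp-2}. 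Consequently projective objects, projective covers and socles are computed blockwise. In particular, if $P$ denotes the projective cover of $\unitobj$ in $\mathcal{C}$, then $P \cong P_1 \oplus \dotsb \oplus P_m$, where $P_i \in \mathcal{C}_{ii}$ is the projective cover of $\unitobj_i$ both in $\mathcal{C}$ and in $\mathcal{C}_{ii}$, and the socle of $P_i$ is the same whether taken in $\mathcal{C}$ or in $\mathcal{C}_{ii}$.

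Next, each $\mathcal{C}_{ii}$ is a finite tensor category with simple unit object $\unitobj_i$, and its enveloping category $(\mathcal{C}_{ii})^{\env}$ is again a finite tensor category, being a tensor full subcategory of $\mathcal{C}^{\env}$; hence Lemma~\ref{lem:soc-P-unit} applies to $\mathcal{C}_{ii}$ and gives $D_i \cong \socle(P_i)$, where $D_i$ is the distinguished invertible object of $\mathcal{C}_{ii}$. Combining this with Theorem~\ref{thm:D-decomposition} and the fact that the socle functor commutes with finite direct sums, I would then obtain
\[
  D \cong \bigoplus_{i=1}^{m} D_i \cong \bigoplus_{i=1}^{m} \socle(P_i) \cong \socle\Bigl( \bigoplus_{i=1}^{m} P_i \Bigr) \cong \socle(P),
\]
which is exactly the desired statement.

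The hard part is not a computation but the bookkeeping underlying the first step: one must check that passing to the component subcategory $\mathcal{C}_{ii}$ neither destroys projectivity of $P_i$ nor produces new subobjects, so that the projective cover of $\unitobj_i$ and its socle are insensitive to whether they are formed in $\mathcal{C}$ or in $\mathcal{C}_{ii}$. This rests only on the orthogonality relations $\unitobj_i \otimes \unitobj_j \cong \delta_{ij}\,\unitobj_i$ of \eqref{eq:unit-decomp-2} together with the exactness of $\unitobj_i \otimes (-) \otimes \unitobj_i$, so no ingredient beyond the excerpt is needed.
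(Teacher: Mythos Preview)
Your proof is correct and follows exactly the route the paper indicates: the corollary is stated there simply as ``a combination of Lemma~\ref{lem:soc-P-unit} and Theorem~\ref{thm:D-decomposition}'' with no further detail, and you have supplied precisely the blockwise bookkeeping (projective covers and socles computed componentwise via the decomposition $\mathcal{C} = \bigoplus_{i,j} \mathcal{C}_{ij}$) that makes this combination rigorous.
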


We now have the following generalization of \cite[Corollary~6.4]{MR2097289}:

\begin{corollary}
  \label{cor:ss-FTC-unimo}
  A semisimple finite tensor category with property~\eqref{eq:C-env-assume} is unimodular.
\end{corollary}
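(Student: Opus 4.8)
The plan is to deduce the statement directly from the preceding corollary, which identifies the distinguished invertible object $D$ of $\mathcal{C}$ with the socle of the projective cover of the unit object $\unitobj \in \mathcal{C}$. Once one has that structural description, nothing remains but the elementary fact that in a semisimple abelian category both ``projective cover'' and ``socle'' are trivial operations.

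First I would recall that, by definition, a semisimple finite tensor category is one whose underlying abelian category is semisimple, so that every object is a finite direct sum of simple objects and hence is projective (and injective). In particular the projective cover of $\unitobj$ is $\unitobj$ itself, the identity serving as the covering epimorphism. Since $\unitobj$ is a semisimple object, it coincides with its own socle, the largest semisimple subobject. Combining these two observations with the socle characterization of $D$ gives $D \cong \socle(\unitobj) \cong \unitobj$, which is precisely the assertion that $\mathcal{C}$ is unimodular.

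There is no real obstacle here: all of the work has already been carried out in Lemma~\ref{lem:end-formula}, Lemma~\ref{lem:soc-P-unit} and Theorem~\ref{thm:D-decomposition}, and the corollary is immediate from the socle formula. If one prefers to avoid invoking the non-simple-unit form of that formula, one can instead argue componentwise: by Theorem~\ref{thm:D-decomposition} it suffices to show that each component subcategory $\mathcal{C}_{ii}$ is unimodular, and $\mathcal{C}_{ii}$ is a semisimple finite tensor category whose unit object $\unitobj_i$ is simple, so Lemma~\ref{lem:soc-P-unit} applies and yields $D_i \cong \socle(\unitobj_i) \cong \unitobj_i$ by the same triviality of socles and projective covers in the semisimple setting; then $D \cong D_1 \oplus \dotsb \oplus D_m \cong \unitobj_1 \oplus \dotsb \oplus \unitobj_m = \unitobj$.
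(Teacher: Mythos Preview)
Your proposal is correct and follows the same route as the paper: the corollary is stated immediately after the identification of $D$ with the socle of the projective cover of $\unitobj$ (itself a combination of Lemma~\ref{lem:soc-P-unit} and Theorem~\ref{thm:D-decomposition}), and the paper leaves the deduction implicit precisely because, in the semisimple case, both operations are trivial and one reads off $D\cong\unitobj$ at once. Your componentwise variant is also fine and simply unpacks the same combination.
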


\section{Applications, II. Constructions of topological invariants}
\label{sec:applications-2}

\subsection{General assumptions}

Throughout this section, $\mathcal{C}$ is a finite tensor category over a field $k$ satisfying \eqref{eq:C-env-assume} and
\begin{equation}
  \label{eq:simple-unit}
  \End_{\mathcal{C}}(\unitobj) \cong k.
\end{equation}
Note that this assumption implies that $\unitobj \in \mathcal{C}$ is simple. Unless otherwise noted, $D$, $U$, $L$, and $R$ have the same meaning as in \S\ref{subsec:app-1-nota}.

\subsection{A commutative algebra in the center}

The aim of this section is to give applications of our results to some constructions of topological invariants. As a preparation, we consider the algebra $B := R(\unitobj)$ obtained as the image of the trivial algebra $\unitobj$ under the monoidal functor $R$. For $V \in \mathcal{C}$, we denote by $R(V)_B \in \mathcal{Z}(\mathcal{C})_B$ the object $R(V)$ endowed with the right $B$-action given by
\begin{equation*}
  R(V) \otimes B
  = R(V) \otimes R(\unitobj)
  \xrightarrow{\quad R_2(V, \unitobj) \quad}
  R(V \otimes \unitobj) = R(V).
\end{equation*}
We note that $\mathcal{Z}(\mathcal{C})$ acts on $\mathcal{C}$ by
\begin{equation*}
  X \ogreaterthan V = U(X) \otimes V \quad (X \in \mathcal{Z}(\mathcal{C}), V \in \mathcal{C}).
\end{equation*}

\begin{theorem}
  \label{thm:com-Frob-ZC}
  The algebra $B$ has the following properties:
  \begin{enumerate}
  \item $B$ is a commutative algebra in $\mathcal{Z}(\mathcal{C})$.
  \item The following functor is an equivalence of $\mathcal{Z}(\mathcal{C})$-module categories:
    \begin{equation*}
      K: \mathcal{C} \to \mathcal{Z}(\mathcal{C})_B,
      \quad V \mapsto R(V)_B
      \quad (V \in \mathcal{C}).
    \end{equation*}
  \item $({}_B B)^* \cong R(D^*)_B$ as right $B$-modules.
  \item $B$ is a Frobenius algebra if and only if $\mathcal{C}$ is unimodular.
  \end{enumerate}
\end{theorem}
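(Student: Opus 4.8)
My plan is to regard $\mathcal{C}$ as a finite left $\mathcal{Z}(\mathcal{C})$-module category via $X \ogreaterthan V = U(X) \otimes V$ and to feed it into the module-category comparison theorem, Theorem~\ref{thm:mod-cat-comparison}. That $B = R(\unitobj)$ is an algebra in $\mathcal{Z}(\mathcal{C})$ is immediate, since $R$ is monoidal (Lemma~\ref{lem:colax-lax-adj-1}) and sends the trivial algebra $\unitobj$ to $B$; the commutativity of $B$ in the braided category $\mathcal{Z}(\mathcal{C})$ is the known fact recalled in the introduction, which I would simply cite (it can also be checked directly from the coend presentation of $B$), and this gives (1). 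For (2), Example~\ref{ex:mod-cat-2} applied to the strong monoidal functor $U$ and its right adjoint $R$ gives $\iHom(V, W) = R(W \otimes V^{*})$; in particular $\iEnd(\unitobj) = R(\unitobj) = B$, and the Barr--Beck comparison functor $N \mapsto \iHom(\unitobj, N) = R(N)$ agrees with the functor $K$ of the statement once one checks that the internal composition $\icomp$ unwinds to the right $B$-action via $R_{2}$. Condition (K1) of Theorem~\ref{thm:mod-cat-comparison} asks that $\iHom(\unitobj, -) = R$ be exact, which is Corollary~\ref{cor:induction-L-R-exact}; condition (K2) is equivalent to faithfulness of $R$, as noted after Theorem~\ref{thm:mod-cat-comparison}, and this holds by Corollary~\ref{cor:induc-L-R-faith} since $\End_{\mathcal{C}}(\unitobj) \cong k$. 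Hence $K$ is an equivalence of $\mathcal{Z}(\mathcal{C})$-module categories, giving (2).

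Part (3) is the technical core, and I would prove it by mimicking the proof of Lemma~\ref{lem:int-Hom-dual}, now for the module category $\mathcal{C}$ over $\mathcal{Z}(\mathcal{C})$. The aim is to show that the relative Serre functor $\mathbb{S}$ of this module category satisfies $\iHom(V, X)^{*} \cong \iHom(X, \mathbb{S}(V))$ with $\mathbb{S}(V) \cong D^{*} \otimes V^{**}$. At the level of objects this rests on $R(V)^{*} \cong L(V^{*}) \cong R(D^{*} \otimes V^{*})$, which comes from $R \cong L^{!}$ and Lemma~\ref{lem:induction-L-R-1}. Specializing to $V = X = \unitobj$ then yields $({}_B B)^{*} = \iEnd(\unitobj)^{*} \cong \iHom(\unitobj, \mathbb{S}(\unitobj)) = R(D^{*})$, and the internal composition provides precisely the right $B$-module structure of $R(D^{*})_B$. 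The step I expect to be the main obstacle is exactly this last point: one must verify that the right $B$-action obtained from the duality of the left module ${}_B B$ is carried, under the chain of isomorphisms above, onto the natural action on $R(D^{*})$; the rest of (3) is formal manipulation of adjunctions and coends.

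Finally, I would deduce (4) from (2), (3) and Lemma~\ref{lem:Frobenius-2}. By Lemma~\ref{lem:Frobenius-2}, $B$ is Frobenius if and only if a trace for $B$ exists, i.e.\ if and only if $B_B \cong ({}_B B)^{*}$ as right $B$-modules. Now $B_B = R(\unitobj)_B = K(\unitobj)$, whereas $({}_B B)^{*} \cong R(D^{*})_B = K(D^{*})$ by (3). Since $K$ is an equivalence by (2), this isomorphism holds if and only if $\unitobj \cong D^{*}$ in $\mathcal{C}$; applying ${}^{*}(-)$ and using ${}^{*}(D^{*}) = D$, this is equivalent to $D \cong \unitobj$, that is, to $\mathcal{C}$ being unimodular. (Alternatively, one can bypass (3): by Lemma~\ref{lem:Frobenius-2} and the equivalence $K$ of (2), $B$ is Frobenius if and only if $R$ is a Frobenius functor, and since $L \dashv U \dashv R$ the latter means $L \cong R$, which by Theorem~\ref{thm:main-theorem} is equivalent to unimodularity.)
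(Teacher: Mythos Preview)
Your treatment of Parts (1), (2) and (4) matches the paper's proof essentially line for line: cite commutativity, identify the internal Hom via Example~\ref{ex:mod-cat-2}, verify (K1) and (K2) using Corollaries~\ref{cor:induction-L-R-exact} and~\ref{cor:induc-L-R-faith}, and for (4) use that $B$ is Frobenius iff $F_B$ is Frobenius.

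The interesting divergence is in Part (3). You propose a Serre-functor argument: establish $\iHom(V,X)^* \cong \iHom(X, D^{*}\otimes V^{**})$ from $R(V)^* \cong L(V^*) \cong R(D^*\otimes V^*)$, then specialize. As you yourself note, the obstacle is checking that this object-level isomorphism respects the right $B$-module structure. The paper bypasses this check entirely by a uniqueness-of-adjoints argument carried out \emph{inside} $\mathcal{Z}(\mathcal{C})_B$. From Corollary~\ref{cor:induction-L-R-2} one has $U \dashv R \dashv U'$ with $U'(X) = D^* \otimes U(X)$; since $F_B \circ K = R$ and $K$ is an equivalence, this yields
\[
  K \circ U \ \dashv\ F_B \ \dashv\ K \circ U'.
\]
But Lemma~\ref{lem:adj-restriction} gives another right adjoint of $F_B$, namely $(-)\otimes ({}_B B)^*$. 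Comparing the two right adjoints produces a natural isomorphism $K(D^* \otimes U(X)) \cong X \otimes ({}_B B)^*$ \emph{in $\mathcal{Z}(\mathcal{C})_B$}; setting $X=\unitobj$ gives $({}_B B)^* \cong K(D^*) = R(D^*)_B$ with the correct right $B$-action built in. So the paper's route is shorter and eliminates precisely the verification you flagged as the main difficulty; your approach would work too, but you would have to actually carry out that compatibility check rather than leave it as an expected obstacle.
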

\begin{proof}
  Part (1) seems to be well-known; see, {\it e.g.}, the proof of \cite[Lemma 3.5]{MR3039775}. To prove Part (2), we note that the internal Hom functor for the $\mathcal{Z}(\mathcal{C})$-module category $\mathcal{C}$ is given by $\iHom(V, W) = R(W \otimes V^*)$ (see Example~\ref{ex:mod-cat-2}). Corollaries~\ref{cor:induction-L-R-exact} and \ref{cor:induc-L-R-faith} imply that $\iHom(\unitobj, -)$ is exact and faithful. Thus, by Theorem~\ref{thm:mod-cat-comparison}, the functor $K$ is an equivalence of $\mathcal{Z}(\mathcal{C})$-module categories.

  By Corollary~\ref{cor:induction-L-R-2}, $U \dashv R \dashv U' := D^* \otimes U(-)$. Let $F_B: \mathcal{Z}(\mathcal{C})_B \to \mathcal{Z}(\mathcal{C})$ be the forgetful functor. Obviously, $F_B \circ K = R$. Since $K$ is an equivalence, we have
  \begin{equation}
    \label{eq:com-Frob-ZC-pf-1}
    K \circ U \ \dashv \ F_B \ \dashv \ K \circ U'.
  \end{equation}
  The functor $(-) \otimes (B_B)^*$ is also right adjoint to $F_B$ by Lemma \ref{lem:adj-restriction}. Hence there exists a natural isomorphism
  \begin{equation*}
    K(D^* \otimes U(X)) \cong X \otimes ({}_B B)^*
    \quad (X \in \mathcal{Z}(\mathcal{C}))
  \end{equation*}
  of $B$-modules. Part (3) is proved by letting $X = \unitobj$ in the above formula. Part (4) follows from the following logical equivalences:
  \begin{equation*}
    \text{$B$ is Frobenius}
    \iff \text{$F_B$ is Frobenius}
    \overset{\text{\eqref{eq:com-Frob-ZC-pf-1}}}{\iff} U \cong U'
    \iff D \cong \unitobj. \qedhere
  \end{equation*}
\end{proof}

\begin{remark}
  \label{rem:Fb-alg-ZC-tr}
  Suppose that $\mathcal{C}$ is unimodular. By the above theorem, there exists a morphism $\lambda_0: B \to \unitobj$ such that $(B, \lambda_0)$ is a Frobenius algebra. It is easy to see that $\lambda_0 \ne 0$ and $(B, c \lambda_0)$ is Frobenius for any $c \in k^{\times}$. Since
  \begin{equation*}
    \dim_k \Hom_{\mathcal{Z}(\mathcal{C})}(B, \unitobj)
    = \dim_k \Hom_{\mathcal{C}}(D, \unitobj)
    = \dim_k \End_{\mathcal{C}}(\unitobj) = 1
  \end{equation*}
  by Corollary~\ref{cor:induction-L-R-2} and our assumption~\eqref{eq:simple-unit}, we have the following conclusion: The pair $(B, \lambda)$ is a commutative Frobenius algebra in $\mathcal{Z}(\mathcal{C})$ for {\em any} non-zero morphism $\lambda: B \to \unitobj$ in $\mathcal{Z}(\mathcal{C})$.
\end{remark}

\subsection{Invariants of handlebody-links}
\label{subsec:hb-link-inv}

Let $\mathbb{N}_0 = \{ 0, 1, 2, \dotsc \}$ denote the set of non-negative integers. A {\em handlebody} of genus $g \in \mathbb{N}_0$ is a 3-manifold obtained from a $3$-ball by attaching $g$ handles. For each $n \in \mathbb{N}_0$, we fix a subset $D_n \subset \mathbb{R}^2$ consisting of $n$ disjoint unit disks whose center lies on the $x$-axis. For $n, m \in \mathbb{N}$, an {\em $(n, m)$-handlebody-tangle} is a disjoint union $T = T_1 \sqcup \dotsb \sqcup T_r$ of handlebodies embedded into $\mathbb{R}^2 \times [0, 1]$ such that
\begin{equation*}
  T \cap (\mathbb{R}^2 \times \{1\}) = D_n, \quad
  T \cap (\mathbb{R}^2 \times \{0\}) = D_m,
\end{equation*}
and the intersection of every genus zero component of $T$ and $\mathbb{R}^2 \times \{ 0, 1 \}$ consists of more than two disks. A {\em handlebody-link} is a $(0, 0)$-handlebody-tangle. By convention, we regard the empty set as a handlebody-link.

Ishii and Masuoka \cite{MR3265394} introduced the braided monoidal category $\mathcal{T}$ of handlebody-tangles. By definition, the class of objects of $\mathcal{T}$ is the set $\mathbb{N}_0$, and the set of morphisms from $n$ to $m$ in $\mathcal{T}$ is the set of equivalence classes of $(n, m)$-handlebody-tangles (here, two handlebody-tangles are said to be {\em equivalent} if one can be transformed into the other by a boundary-preserving isotopy $\mathbb{R}^2 \times [0, 1]$). The composition of morphisms, the tensor product and the braiding of $\mathcal{T}$ are defined in a similar way as the category of ordinary (framed) tangles.

Now let $\mathcal{B}$ be a braided monoidal category with braiding $\sigma$. By the definition of the category $\mathcal{T}$, a braided monoidal functor $\mathcal{T} \to \mathcal{B}$ yields an invariant of handlebody-links with values in $\End_{\mathcal{B}}(\unitobj)$. To construct such a functor, Ishii and Masuoka \cite{MR3265394} introduced the following notion:

\begin{definition}
  \label{def:QCQSA}
  A {\em quantum-commutative quantum-symmetric algebra} (QCQSA) in $\mathcal{B}$ is a triple $(A, m, e)$ consisting of an object $A \in \mathcal{B}$ and morphisms
  \begin{equation*}
    m: A \otimes A \to A \quad \text{and} \quad
    e: A \otimes A \to \unitobj
  \end{equation*}
  satisfying the following conditions:
  \begin{enumerate}
    \renewcommand{\labelenumi}{(Q\arabic{enumi})}
  \item $m \circ (m \otimes \id_A) = m \circ (\id_A \otimes m)$
  \item $e \circ (m \otimes \id_A) = e \circ (\id_A \otimes m)$.
  \item $m$ is {\em commutative}, {\it i.e.}, $m \circ \sigma_{A,A} = m$.
  \item $e$ is {\em symmetric}, {\it i.e.}, $e \circ \sigma_{A,A} = e$.
  \item There exists a morphism $c: \unitobj \to A \otimes A$ such that the triple $(A, e, c)$ is a left dual object of $A$.
  \end{enumerate}
\end{definition}

\begin{figure}
  \renewcommand{\tabcolsep}{1em}
  \begin{tabular}{ccccc}
    \includegraphics{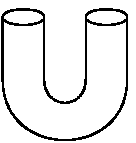} &
    \includegraphics{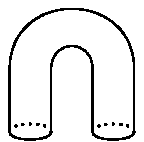} &
    \includegraphics{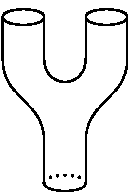} &
    \includegraphics{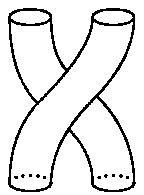} &
    \includegraphics{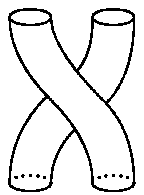} \\ 
    $\cup: 2 \to 0$ &
    $\cap: 0 \to 2$ &
    $\mathsf{Y}: 2 \to 1$ &
    $\mathsf{X}^+: 2 \to 2$ &
    $\mathsf{X}^-: 2 \to 2$
  \end{tabular}
  \caption{Elementary handlebody-tangles}
  \label{fig:elem-hb-tang}
\end{figure}

It is easy to see that the monoidal category $\mathcal{T}$ is generated by handlebody-tangles $\cup$, $\cap$, $\mathsf{Y}$, $\mathsf{X}^+$ and $\mathsf{X}^-$ depicted in Figure~\ref{fig:elem-hb-tang}. The fundamental relations among these generators are completely determined in \cite{MR2943680,MR3265394}. As a result, a QCQSA $(A, m, e)$ in $\mathcal{B}$ yields a unique (up to isomorphism) braided monoidal functor $F: \mathcal{B} \to \mathcal{T}$ such that $F(1) = A$, $F(\cup) = e$, $F(\cap) = c$, $F(\mathsf{Y}) = m$ and $F(\mathsf{X}^+) = \sigma_{A,A}$, where $c$ is the morphism in (Q5) of Definition~\ref{def:QCQSA}.

If $(A, \lambda)$ is a commutative Frobenius algebra in $\mathcal{B}$, then $(A, m_A, \lambda \circ m_A)$ is a QCQSA in $\mathcal{B}$ (moreover, every `unital' QCQSA is obtained in this way). Now we suppose that $\mathcal{C}$ is unimodular. Then, by Theorem~\ref{thm:com-Frob-ZC}, the algebra $B = R(\unitobj)$ is commutative and Frobenius. Given a trace $\lambda: B \to \unitobj$, we denote by
\begin{equation*}
  F_{\mathcal{C}}(-; \lambda): \mathcal{T} \to \mathcal{Z}(\mathcal{C})
\end{equation*}
the braided monoidal functor obtained from the QCQSA $(B, m_B, \lambda \circ m_B)$ by the above construction.

\begin{example}
  \label{ex:com-Fb-alg-YD}
  Let $H$ be a finite-dimensional Hopf algebra over $k$ with comultiplication $\Delta$, counit $\epsilon$ and antipode $S$. We use the Sweedler notation, such as
  \begin{equation*}
    \Delta(h) = h_{(1)} \otimes h_{(2)}
    \quad \text{and} \quad
    \Delta(h_{(1)}) \otimes h_{(2)} = h_{(1)} \otimes h_{(2)} \otimes h_{(3)}
    = h_{(1)} \otimes \Delta(h_{(2)})
  \end{equation*}
  for $h \in H$. A {\em Yetter-Drinfeld module} over $H$ is a left $H$-module $M$ endowed with a left $H$-comodule structure $m \mapsto m_{(-1)} \otimes m_{(0)}$ such that
  \begin{equation*}
    (h m)_{(-1)} \otimes (h m)_{(0)}
    = h_{(1)} m_{(-1)} S(h_{(3)}) \otimes h_{(2)} m
  \end{equation*}
  for all $h \in H$ and $m \in M$ \cite{MR1243637}. As is well-known, the center of $\mathcal{C} := H\mbox{-{\sf mod}}$ can be identified with the category ${}^H_H \mathcal{YD}_f$ of finite-dimensional Yetter-Drinfeld modules over $H$. Under this identification, a right adjoint $R: \mathcal{C} \to {}^H_H \mathcal{YD}$ of $U$ is given as follows: As a vector space, $R(V) = H \otimes_k V$ for $V \in \mathcal{C}$. The action and the coaction of $H$ on $R(V)$ are given by
  \begin{equation*}
    h \cdot (a \otimes v) = h_{(1)} a S(h_{(3)}) \otimes (h_{(2)} \cdot v)
    \quad \text{and} \quad
    a \otimes v \mapsto a_{(1)} \otimes a_{(2)} \otimes v,
  \end{equation*}
  respectively, for $h, a \in H$ and $v \in V$. We note that the unit $\eta^r: \id_{\mathcal{Z}(\mathcal{C})} \to R U$ and the counit $\varepsilon^r: U R \to \id_{\mathcal{C}}$ of the adjunction are given by
  \begin{equation*}
    \eta^r_M(m) = m_{(-1)} \otimes m_{(0)}
    \quad \text{and} \quad
    \varepsilon^r_V(a \otimes v) = \epsilon(a) v,
  \end{equation*}
  respectively, for $m \in M \in {}^H_H \mathcal{YD}_f$, $v \in V \in \mathcal{C}$ and $a \in H$. The algebra $B = R(k)$ is identical to the one considered in \cite{MR3265394}.
\end{example}

Let $H$ and $B$ be as in Example~\ref{ex:com-Fb-alg-YD}. A linear map $\lambda: B \to k$ is $H$-colinear if and only if $h_{(1)} \lambda(h_{(2)}) = \lambda(h) 1$ for all $h \in H$, {\it i.e.}, $\lambda$ is a left integral on $H$. Thus, in view of Remark~\ref{rem:Fb-alg-ZC-tr}, a non-zero left integral $\lambda$ on $H$ is a morphism $\lambda: B \to k$ in ${}^H_H \mathcal{YD}$ if and only if $H$ is unimodular.

Now we suppose that $H$ is unimodular. Let $\lambda: H \to k$ be a non-zero left integral on $H$. By the above argument, we obtain a braided monoidal functor
\begin{equation*}
  F_H(-; \lambda) := F_{H\text{-{\sf mod}}}(-; \lambda)
\end{equation*}
from $\mathcal{T}$ to $\mathcal{Z}(H\mbox{-{\sf mod}}) \cong {}^H_H \mathcal{YD}_f$. Restricting this functor to $\End_{\mathcal{T}}(0)$, we obtain an invariant of handlebody-links. However, as Ishii and Masuoka observed in \cite{MR3265394}, the invariant obtained in this way is constantly zero unless $H$ is cosemisimple.

To obtain a meaningful invariant from non-cosemisimple $H$, they proposed the following modification of the above invariant: Every handlebody-link $T$ can be expressed as $T = \cup \circ T'$ for some $T': 0 \to 2$ in $\mathcal{T}$. Choose such $T'$ and set
\begin{equation}
  \label{eq:inv-V-Hopf}
  V_H(T; \lambda) := \epsilon \circ F_H(T'; \lambda),
\end{equation}
where $\epsilon: H \to k$ is the counit of $H$. If the condition
\begin{equation}
  \label{eq:S-inv-Hopf}
  \lambda(S(z)) = \lambda(z) \quad \text{for all $z \in \mathrm{Cent}(H)$} \quad \text{($:=$ the center of $H$)}
\end{equation}
is satisfied, then $V_H(T; \lambda)$ does not depend on the choice of $T'$ and hence $V_H(-; \lambda)$ is an invariant of handlebody-links.

Although it is not yet known whether $V_H(-; \lambda)$ is a non-trivial invariant, it is an interesting problem to understand their construction in the setting of finite tensor categories. To attack this problem, we utilize the central Hopf monad $Z$ on $\mathcal{C}$. Let $\mu$, $\eta$, $Z_2$, $Z_0$ and $S$ have the same meaning as \S\ref{subsec:central-Hopf-monad}. To formulate the condition \eqref{eq:S-inv-Hopf} in the categorical setting, we consider the following map:
\begin{equation*}
  \mathfrak{S}: \Hom_{\mathcal{C}}(Z(\unitobj), \unitobj)
  \to \Hom_{\mathcal{C}}(Z(\unitobj), \unitobj),
  \quad \alpha \mapsto S_{\unitobj} \circ Z(\alpha^*).
\end{equation*}
Note that there is an isomorphism
\begin{equation}
  \label{eq:Z-and-End-id}
  \Hom_{\mathcal{C}}(Z(\unitobj), \unitobj)
  \to \END(\id_{\mathcal{C}}) := \NAT(\id_{\mathcal{C}}, \id_{\mathcal{C}}),
  \quad \alpha \mapsto (\id_{(-)} \otimes \alpha) \circ \partial_{\unitobj}(-),
\end{equation}
where $\partial_V(X): V \otimes X \to X \otimes Z(V)$ is a natural transformation given in \S\ref{subsec:Hopf-monads}. The map $\mathfrak{S}$ has the following meaning:

\begin{lemma}
  \label{lem:S-operator}
  For a natural transformation $\xi: \id_{\mathcal{C}} \to \id_{\mathcal{C}}$, we define
  \begin{equation*}
    {}^!\xi_X = (\xi_{{}^* \! X})^* \quad (V \in \mathcal{C}).
  \end{equation*}
  Then the following diagram commutes:
  \begin{equation*}
    \xymatrix{
      \Hom_{\mathcal{C}}(Z(\unitobj), \unitobj)
      \ar[rr]^{\eqref{eq:Z-and-End-id}}
      \ar[d]_{\mathfrak{S}}
      & & \END(\id_{\mathcal{C}})\phantom{.}
      \ar[d]^{{}^!(-)} \\
      \Hom_{\mathcal{C}}(Z(\unitobj), \unitobj)
      \ar[rr]_{\eqref{eq:Z-and-End-id}} & & \END(\id_{\mathcal{C}}).
    }
  \end{equation*}
\end{lemma}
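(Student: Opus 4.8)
The plan is to chase an element $\alpha \in \Hom_{\mathcal{C}}(Z(\unitobj), \unitobj)$ around the square, the key input being the description of the left antipode of $Z$ via $\partial$ recorded in Remark~\ref{rem:Hopf-monad-Z-antipode}. Let $\alpha$ correspond to $\xi \in \END(\id_{\mathcal{C}})$ under \eqref{eq:Z-and-End-id}, so that $\xi_X = (\id_X \otimes \alpha) \circ \partial_{\unitobj}(X)$ for $X \in \mathcal{C}$. Going down and then right, $\alpha$ is sent first to $\mathfrak{S}(\alpha) = S_{\unitobj} \circ Z(\alpha^*)$ and then to the natural transformation $X \mapsto (\id_X \otimes \mathfrak{S}(\alpha)) \circ \partial_{\unitobj}(X)$; going right and then down, $\alpha$ is sent to $\xi$ and then to $^!\xi$, where $({}^!\xi)_X = (\xi_{{}^*X})^*$. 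Hence the commutativity of the diagram amounts to the single identity
\begin{equation*}
  (\id_X \otimes \mathfrak{S}(\alpha)) \circ \partial_{\unitobj}(X) = (\xi_{{}^*X})^*
  \qquad (X \in \mathcal{C}),
\end{equation*}
which I would verify by a direct computation.

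First I would record that $\partial$ is natural in the monad variable: for a morphism $f \colon V \to W$ in $\mathcal{C}$ one has $(\id_X \otimes Z(f)) \circ \partial_V(X) = \partial_W(X) \circ (f \otimes \id_X)$. This is immediate from the construction of $\partial$ in \S\ref{subsec:central-Hopf-monad}, since $\partial_V$ is the natural transformation corresponding to $\id_{Z(V)}$ under a chain of isomorphisms natural in $V$. Applying this with $f = \alpha^* \colon \unitobj \to Z(\unitobj)^*$ (using $\unitobj^* = \unitobj$) yields
\begin{equation*}
  (\id_X \otimes \mathfrak{S}(\alpha)) \circ \partial_{\unitobj}(X)
  = (\id_X \otimes S_{\unitobj}) \circ \partial_{Z(\unitobj)^*}(X) \circ (\alpha^* \otimes \id_X).
\end{equation*}
Now I would invoke \eqref{eq:Hopf-monad-Z-antipode-2} with $V = \unitobj$, which rewrites the first two factors on the right-hand side as $\partial_{\unitobj}({}^*X)^*$, so that the expression becomes $\partial_{\unitobj}({}^*X)^* \circ (\alpha^* \otimes \id_X)$.

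It remains to identify this with $(\xi_{{}^*X})^*$. Since the left duality functor is strong monoidal and $({}^*X)^* = X$, we have $(\id_{{}^*X} \otimes \alpha)^* = \alpha^* \otimes \id_X$, whence
\begin{equation*}
  \partial_{\unitobj}({}^*X)^* \circ (\alpha^* \otimes \id_X)
  = \bigl( (\id_{{}^*X} \otimes \alpha) \circ \partial_{\unitobj}({}^*X) \bigr)^*
  = (\xi_{{}^*X})^*,
\end{equation*}
and naturality in $X$ is automatic. I do not anticipate any real difficulty; the only steps needing care are the precise formulation of the naturality of $\partial$ in the monad variable and the bookkeeping with the normalized dualities ($\unitobj^* = \unitobj$, $(f \otimes g)^* = g^* \otimes f^*$, $({}^*X)^* = X$), after which the argument is the short chase above driven by \eqref{eq:Hopf-monad-Z-antipode-2}.
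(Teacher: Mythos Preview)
Your argument is correct and is essentially identical to the paper's own proof: both reduce the claim to the identity $(\id_X \otimes \mathfrak{S}(\alpha)) \circ \partial_{\unitobj}(X) = (\xi_{{}^*X})^*$ and verify it by the naturality of $\partial$ in the monad variable, an application of \eqref{eq:Hopf-monad-Z-antipode-2} with $V=\unitobj$, and the contravariant monoidality of $(-)^*$. The paper's version is merely a compressed four-line display of the same chain of equalities you spelled out.
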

\begin{proof}
  For all $\alpha \in \Hom_{\mathcal{C}}(Z(\unitobj), \unitobj)$ and $X \in \mathcal{C}$, we have
  \begin{align*}
    (\id_X \otimes \mathfrak{S}(\alpha)) \circ \partial_{\unitobj}(X)
    & = (\id_X \otimes S_{\unitobj}) \circ (\id_X \otimes Z(\alpha^*)) \circ \partial_{\unitobj}(X) \\
    & = (\id_X \otimes S_{\unitobj}) \circ \partial_{Z(\unitobj)^*}(X) \circ (\alpha^* \otimes \id_X) \\
    & = \partial_{\unitobj}({}^* \! X)^* \circ (\id_{{}^* \! X} \otimes \alpha)^*
    \qquad \text{(by \eqref{eq:Hopf-monad-Z-antipode-2})} \\
    & = ((\id_{{}^* \! X} \otimes \alpha) \circ \partial_{\unitobj}({}^*X))^*. \qedhere
  \end{align*}
\end{proof}

Under the identification $\mathcal{Z}(\mathcal{C}) \approx {}_Z \mathcal{C}$, the functor
\begin{equation*}
  L: \mathcal{C} \to \mathcal{Z}(\mathcal{C}),
  \quad V \mapsto (Z(V), \mu_V)
\end{equation*}
is left adjoint to $U$. In view of the results of \S\ref{subsec:colax-lax-adj}, we may assume $R = L^!$. Let $B$ be the commutative Frobenius algebra of Theorem~\ref{thm:com-Frob-ZC} with trace $\lambda$. As a categorical counter-part of~\eqref{eq:S-inv-Hopf}, we introduce the following condition:
\begin{equation}
  \label{eq:S-inv-FTC}
  \mathfrak{S}(\alpha) \circ \lambda^* = \alpha \circ \lambda^*
  \quad \text{for all $\alpha \in \Hom_{\mathcal{C}}(Z(\unitobj), \unitobj)$}.
\end{equation}

\begin{theorem}
  \label{thm:IM-inv}
  Notations are as above. Given a handlebody-link $T$, we choose a handlebody-tangle $T'$ such that $T = \cup \circ T'$ and then set
  \begin{equation}
    \label{eq:inv-V-FTC}
    V_{\mathcal{C}}(T; \lambda) = \varepsilon^r_{\unitobj} \circ F_{\mathcal{C}}(T'; \lambda),
  \end{equation}
  where $\varepsilon^r: U R \to \id_{\mathcal{C}}$ is the counit of the adjunction $U \dashv R$. If \eqref{eq:S-inv-FTC} is satisfied, then $V_{\mathcal{C}}(T; \lambda)$ does not depend on the choice of $t'$ and hence $V_{\mathcal{C}}(-; \lambda)$ is an invariant of handlebody-links.
\end{theorem}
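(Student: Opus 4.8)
The plan is to reduce the statement to the topological bookkeeping of Ishii and Masuoka and then to transport their computation into the language of the central Hopf monad $Z$.

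First I would recall from \cite{MR3265394} (see also \cite{MR2943680}) the description of the ambiguity in writing $T = \cup \circ T'$: if $T', T'' : 0 \to 2$ are handlebody-tangles with $\cup \circ T' = \cup \circ T''$, then $T'$ and $T''$ are connected by a finite sequence of moves of two kinds --- (a) moves supported in the interior $\mathbb{R}^2 \times (0,1)$, namely isotopies and the defining relations of $\mathcal{T}$, and (b) ``closing'' moves involving the cap $\cup$, in which one of the two distinguished disks is dragged around its handlebody component and past $\cup$. The moves of type (a) leave $F_{\mathcal{C}}(T';\lambda)$ unchanged, since $F_{\mathcal{C}}(-;\lambda) : \mathcal{T} \to \mathcal{Z}(\mathcal{C})$ is by construction a braided monoidal functor, obtained from the QCQSA $(B, m_B, \lambda \circ m_B)$ --- here $B$ is commutative and Frobenius by Theorem~\ref{thm:com-Frob-ZC} together with the standing unimodularity hypothesis, and recall from the preceding discussion that any commutative Frobenius algebra is a QCQSA. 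Since moreover $\varepsilon^r$ is natural, $V_{\mathcal{C}}(T;\lambda)$ is unaffected by moves of type (a). So everything comes down to analyzing the closing moves.

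For a closing move I would write it out explicitly, apply $F_{\mathcal{C}}(-;\lambda)$ and $\varepsilon^r$, and compare the two sides in $\mathcal{C}$ by string diagrams, using: (i) the identification $U(B) = {}^*Z(\unitobj)$ with the half-braiding of $B$ expressed through the natural transformation $\partial$ of \S\ref{subsec:central-Hopf-monad}; (ii) the commutativity of $B$ in $\mathcal{Z}(\mathcal{C})$ (Theorem~\ref{thm:com-Frob-ZC}(1)); (iii) unimodularity, which via Theorem~\ref{thm:com-Frob-ZC}(3) gives $({}_B B)^* \cong R(D^*)_B \cong B_B$ as right $B$-modules, so that $\lambda$ is a genuine two-sided trace for $B$; (iv) a concrete description of the counit $\varepsilon^r_\unitobj : UR(\unitobj) \to \unitobj$ (in the Hopf case this is $\epsilon$, cf. Example~\ref{ex:com-Fb-alg-YD}); and (v) Lemma~\ref{lem:S-operator}, which says exactly that the effect of dragging a disk past the cap is governed by $\mathfrak{S}$, i.e. by ${}^!(-)$ on $\END(\id_{\mathcal{C}})$. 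Carrying this through, the two ways of closing $T'$ should yield the morphisms $\alpha \circ \lambda^*$ and $\mathfrak{S}(\alpha) \circ \lambda^*$, as $\alpha$ runs over $\Hom_{\mathcal{C}}(Z(\unitobj), \unitobj)$; hence they coincide precisely when \eqref{eq:S-inv-FTC} holds. This gives independence of $V_{\mathcal{C}}(T;\lambda)$ from the choice of $T'$, and therefore the desired invariant of handlebody-links.

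The main obstacle is the string-diagram computation in the third step. The delicate points are that the closing move only ever ``sees'' the fixed trace $\lambda$ --- which is why the correct hypothesis is not $\mathfrak{S} = \id$ but merely $\mathfrak{S}(\alpha) \circ \lambda^* = \alpha \circ \lambda^*$ --- and that one must keep careful track of left versus right duals and of $S$ versus $S^{-1}$ as the disk travels around its component and through $\cup$. As a consistency check, specializing to $\mathcal{C} = H\mbox{-{\sf mod}}$ and using Example~\ref{ex:com-Fb-alg-YD} together with the identification $\END(\id_{\mathcal{C}}) \cong \mathrm{Cent}(H)$, condition \eqref{eq:S-inv-FTC} becomes $\lambda(S(z)) = \lambda(z)$ for all $z \in \mathrm{Cent}(H)$, i.e. \eqref{eq:S-inv-Hopf}, and the construction recovers that of Ishii and Masuoka \cite{MR3265394}; since they established the Hopf case directly, the categorical argument should parallel theirs step by step.
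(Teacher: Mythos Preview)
Your overall strategy matches the paper's: both invoke the combinatorics of \cite[\S5]{MR3265394} to reduce well-definedness to a single algebraic identity coming from the closing move, and then verify that identity using the central Hopf monad. Where you leave the computation as a plan (``carrying this through \ldots should yield''), the paper carries it out, and the device it uses is worth flagging because your sketch does not quite isolate it.

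The paper first phrases the reduced condition as
\[
(\varepsilon^r_{\unitobj} \otimes \lambda)\circ\beta
=(\lambda\otimes\varepsilon^r_{\unitobj})\circ\beta
\quad\text{for all }\beta\in\Hom_{\mathcal{Z}(\mathcal{C})}(\unitobj,B\otimes B),
\]
then dualizes (using $R=L^!$, so $\varepsilon^r_V={}^*(\eta^{\ell}_{V^*})$) to the equivalent
\[
\gamma\circ(\lambda^*\otimes\eta^{\ell}_{\unitobj})
=\gamma\circ(\eta^{\ell}_{\unitobj}\otimes\lambda^*)
\quad\text{for all }\gamma\in\Hom_{Z}(Z(\unitobj)\otimes Z(\unitobj),\unitobj).
\]
The key step you are missing is the parametrization of all such $Z$-linear pairings $\gamma$: the adjunction $L\dashv U$ gives an isomorphism
\[
\Theta:\Hom_{\mathcal{C}}(Z(\unitobj),\unitobj)\xrightarrow{(-)^*}\Hom_{\mathcal{C}}(\unitobj,Z(\unitobj)^*)\xrightarrow{\ \cong\ }\Hom_{Z}(Z(\unitobj),Z(\unitobj)^*),
\]
so every $\gamma$ is $\eval_{Z(\unitobj)}\circ(\Theta(\alpha)\otimes\id)$ for a unique $\alpha\in\Hom_{\mathcal{C}}(Z(\unitobj),\unitobj)$; explicitly $\Theta(\alpha)=S_{Z(\unitobj)}\circ Z(\mu_{\unitobj}^*)\circ Z(\alpha^*)$ by \eqref{eq:Hopf-monad-dual-module}. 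With this in hand the two sides are computed directly from the Hopf monad axioms (naturality of $S$, the monad identity $\mu\circ Z(\eta)=\id$, and the $Z$-linearity of $\lambda^*$), yielding $\mathfrak{S}(\alpha)\circ\lambda^*$ and $\alpha\circ\lambda^*$ respectively. Lemma~\ref{lem:S-operator} is not used in this computation; it serves only to interpret $\mathfrak{S}$ afterwards.

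In short, your plan is sound but the string-diagram computation you defer is exactly the content of the proof, and the clean way to do it is to dualize to $Z(\unitobj)$ and use $\Theta$ to turn the quantification over pairings $\gamma$ into a quantification over $\alpha$. Your item (v) overstates what Lemma~\ref{lem:S-operator} provides, and items (ii)--(iii) (commutativity of $B$, the trace property) are already absorbed into the Ishii--Masuoka reduction and play no further role in the algebraic step.
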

\begin{proof}
  By the same argument as in \cite[\S5]{MR3265394}, the claim reduces to that
  \begin{equation}
    \label{eq:IM-inv-pf-1}
    (\varepsilon^r_{\unitobj} \otimes \lambda) \circ \beta
    = (\lambda \otimes \varepsilon^r_{\unitobj}) \circ \beta
    \quad \text{for all $\beta \in \Hom_{\mathcal{Z}(\mathcal{C})}(\unitobj, B \otimes B)$}.
  \end{equation}
  Let $\eta^{\ell}$ and $\varepsilon^{\ell}$ be the unit and the counit of $L \dashv U$, respectively. We may assume that the unit $\eta^r$ and the counit $\varepsilon^r$ of $U \dashv R$ are given by
  \begin{equation*}
    \eta^r_M = {}^*(\varepsilon^{\ell}_{M^*})
    \quad (M \in {}_Z \mathcal{C})
    \quad \text{and} \quad
    \varepsilon^r_V = {}^*(\eta^{\ell}_{V^*})
    \quad (V \in \mathcal{C}),
  \end{equation*}
  respectively. Then~\eqref{eq:IM-inv-pf-1} is equivalent to that
  \begin{equation}
    \label{eq:IM-inv-pf-2}
    \gamma \circ (\lambda^* \otimes \eta_{\unitobj}^{\ell})
    = \gamma \circ (\eta_{\unitobj}^{\ell} \otimes \lambda^*)
    \quad \text{for all $\gamma \in \Hom_{Z}(Z(\unitobj) \otimes Z(\unitobj), \unitobj)$}.
  \end{equation}
  We note that there is a canonical isomorphism
  \begin{equation*}
    \Theta: \Hom_{\mathcal{C}}(Z(\unitobj), \unitobj)
    \xrightarrow{\quad (-)^* \quad}
    \Hom_{\mathcal{C}}(\unitobj, Z(\unitobj)^*)
    \xrightarrow{\quad L \, \dashv \, U \quad}
    \Hom_{Z}(Z(\unitobj), Z(\unitobj)^*)
  \end{equation*}
  Explicitly, $\Theta(\alpha) = S_{Z(\unitobj)} \circ Z(\mu_{\unitobj}^*) \circ Z(\alpha^*)$ by \eqref{eq:Hopf-monad-dual-module}. Now let $\gamma: Z(\unitobj) \otimes Z(\unitobj) \to \unitobj$ be a morphism of $Z$-modules. Since
  \begin{equation*}
    \gamma = \eval_{Z(\unitobj)} \circ (\Theta(\alpha) \otimes \id_{Z(\unitobj)})
  \end{equation*}
  for some $\alpha: Z(\unitobj) \to \unitobj$, we compute
  \begin{align*}
    \gamma \circ (\eta_{\unitobj}^{\ell} \otimes \lambda^*)
    & = \eval_{Z(\unitobj)} \circ (\Theta(\alpha) \otimes \id_{Z(\unitobj)})
    \circ (\eta_{\unitobj}^{\ell} \otimes \lambda^*) \\
    & = \eta_{\unitobj}^* \circ S_{Z(\unitobj)}
    \circ Z(\mu_{\unitobj}^*) \circ Z(\alpha^*) \circ \lambda^* \\
    & = S_{\unitobj} \circ Z(Z(\eta_{\unitobj})^*)
    \circ Z(\mu_{\unitobj}^*) \circ Z(\alpha^*) \circ \lambda^*
    & & \text{(by the naturality of $S$)} \\
    & = S_{\unitobj} \circ Z(\alpha^*) \circ \lambda^*
    & & \text{(by $\mu_{\unitobj} \circ Z(\eta_{\unitobj}) = \id_{Z(\unitobj)}$)} \\
    & = \mathfrak{S}(\alpha) \circ \lambda^*, \\
    \gamma \circ (\lambda^* \otimes \eta_{\unitobj}^{\ell})
    & = \lambda^{**} \circ S_{Z(\unitobj)}
    \circ Z(\mu_{\unitobj}^*) \circ Z(\alpha^*) \circ \eta_{\unitobj} \\
    & = S_{\unitobj} \circ Z(Z(\lambda^*)^*)
    \circ Z(\mu_{\unitobj}^*) \circ Z(\alpha^*) \circ \eta_{\unitobj}
    & & \text{(by the naturality of $S$)} \\
    & = S_{\unitobj} \circ Z((\mu_{\unitobj} \circ Z(\lambda^*))^*)
    \circ Z(\alpha^*) \circ \eta_{\unitobj} \\
    & = S_{\unitobj} \circ Z((\lambda^* \circ Z_0)^*)
    \circ Z(\alpha^*) \circ \eta_{\unitobj}
    & & \text{(by the $Z$-linearity of $\lambda^*$)} \\
    & = S_{\unitobj} \circ Z(Z_0^*) \circ \eta_{Z(\unitobj)^*}
    \circ \lambda^{**} \circ \alpha^*
    & & \text{(by the naturality of $\eta$)} \\
    & = \lambda^{**} \circ \alpha^*
    & & \text{(see \cite[\S2.3 and \S3.3]{MR2355605})} \\
    & = \alpha \circ \lambda^{*}
    & & \text{($f^* = f$ for $f \in \End_{\mathcal{C}}(\unitobj)$)}.
  \end{align*}
  This means that~\eqref{eq:IM-inv-pf-2} is equivalent to \eqref{eq:S-inv-FTC}.
\end{proof}

\begin{example}
  We use the same notation as in Example~\ref{ex:com-Fb-alg-YD}. Suppose that $H$ is unimodular. Then, as we have seen, a non-zero right integral $\lambda$ on $H$ is a trace of the algebra $B$ in ${}^H_H \mathcal{YD}_f$. It is easy to see that \eqref{eq:inv-V-FTC} reduces to \eqref{eq:inv-V-Hopf} if $\mathcal{C} = H\mbox{-{\sf mod}}$. To see that \eqref{eq:S-inv-FTC} reduces to \eqref{eq:S-inv-Hopf}, we note that there are isomorphisms
  \begin{equation*}
    \Hom_{\mathcal{C}}(Z(\unitobj), \unitobj) \cong \END(\id_{\mathcal{C}}) \cong \mathrm{Cent}(H).
  \end{equation*}
  If $z \in \mathrm{Cent}(H)$ corresponds to $f \in \Hom_{\mathcal{C}}(Z(\unitobj), \unitobj)$, then, by Lemma~\ref{lem:S-operator},
  \begin{equation*}
    \lambda^* \circ f = \lambda(z)
    \quad \text{and} \quad
    \lambda^* \circ \mathfrak{S}(f) = \lambda(S^{-1}(z))
  \end{equation*}
  in $\End_H(k) \cong k$. Therefore \eqref{eq:S-inv-FTC} is equivalent to \eqref{eq:S-inv-Hopf} if $\mathcal{C} = H\mbox{-{\sf mod}}$.
\end{example}

\subsection{Integral of the coend Hopf algebra}

Suppose that the finite tensor category $\mathcal{C}$ has a braiding $\sigma$. Then the object
\begin{equation*}
  \mathbf{F}_{\mathcal{C}} := Z(\unitobj) = \int^{X \in \mathcal{C}} X^* \otimes X
\end{equation*}
has a structure of a Hopf algebra in $\mathcal{C}$ given as follows: The comultiplication $\Delta$ and the counit $\varepsilon$ are induced from the comonoidal structure of the Hopf monad $Z$. The multiplication and the unit are defined by
\begin{equation*}
  m \circ (i_\unitobj(X) \otimes i_\unitobj(Y))
  = i_\unitobj(Y \otimes X) \circ (\id_{X^*} \otimes \sigma_{X,Y^* \otimes Y})
  \text{\quad and \quad}
  u = i_\unitobj(\unitobj),
\end{equation*}
respectively, for $X, Y \in \mathcal{C}$, where $i_V(X): X^* \otimes V \otimes X \to Z(V)$ ($V, X \in \mathcal{C}$) is the universal dinatural transformation. We omit the description of the antipode since we will not use it. The Hopf algebra $\mathbf{F}_{\mathcal{C}}$ is used to construct an invariant of closed 3-manifolds; see \cite{MR1862634} and \cite{MR2251160}.

In this subsection, we give some applications of our results to integrals for $\mathbf{F}_{\mathcal{C}}$. We first recall the definition of an integral in a braided Hopf algebra. Let $\mathcal{B}$ be a braided monoidal category, and let $H$ be a Hopf algebra in $\mathcal{B}$. A ($K$-based) {\em left integral in $H$} \cite[Definition 3.1]{MR1759389} is a pair $(K, \Lambda)$ consisting of an object $K \in \mathcal{B}$ and a morphism $\Lambda: K \to H$ in $\mathcal{B}$ satisfying
\begin{equation}
  \label{eq:left-integral}
  m_H \circ (\id_H \otimes \Lambda) = \varepsilon_H \otimes \Lambda,
\end{equation}
where $m_H$ and $\varepsilon_H$ are the multiplication and the counit of $H$, respectively. We denote by $\mathcal{I}_{\ell}(H)$ the full subcategory of the category of objects over $H$ and call it the category of left integrals in $H$. The category $\mathcal{I}_r(H)$ of right integrals in $H$ is defined in a similar way.

Now suppose that $\mathcal{B}$ is rigid and has equalizers. Then the antipode $S$ of $H$ is invertible \cite[Theorem 4.1]{MR1685417}. The category $\mathcal{I}_{\ell}(H)$ has a terminal object \cite[Proposition 3.1]{MR1759389}. We write it as $(\mathrm{Int}(H), \Lambda_{\ell})$ and call $\mathrm{Int}(H)$ the {\em object of integrals}. The first result of this subsection is the following description of the object of integrals of the Hopf algebra $\mathbf{F}_{\mathcal{C}}$.

\begin{theorem}
  \label{thm:int-F-D}
  $\mathrm{Int}(\mathbf{F}_{\mathcal{C}}) \cong D^*$
\end{theorem}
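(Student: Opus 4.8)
The plan is to identify the object of integrals $\mathrm{Int}(\mathbf{F}_{\mathcal{C}})$ with $D^*$ by exploiting the description of $\mathbf{F}_{\mathcal{C}} = Z(\unitobj)$ as a coend together with the adjunctions $L \dashv U \dashv R$ and the formula $R(V) \cong L(D \otimes V)$ of Lemma~\ref{lem:induction-L-R-1}. The key observation is that a left integral $\Lambda \colon K \to \mathbf{F}_{\mathcal{C}}$ is, by the defining equation~\eqref{eq:left-integral}, exactly a morphism which is annihilated by multiplication in the appropriate sense: $m_{\mathbf{F}_{\mathcal{C}}} \circ (\id \otimes \Lambda) = \varepsilon_{\mathbf{F}_{\mathcal{C}}} \otimes \Lambda$. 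Since $\mathbf{F}_{\mathcal{C}}$ as a coalgebra in $\mathcal{C}$ is nothing but the image $L(\unitobj)$ under the comonoidal functor $L$ (recall $L(V) = (Z(V), \mu_V)$ and the coalgebra structure comes from the comonoidal structure of $Z$), I expect the condition~\eqref{eq:left-integral} to translate, after passing to $\mathcal{Z}(\mathcal{C})$ via adjunction, into the statement that $\Lambda$ factors through a one-dimensional ``integral space'' controlled by $D$.

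Concretely, first I would recall that in a braided finite tensor category the Hopf algebra $\mathbf{F}_{\mathcal{C}}$ and the ``dual'' Hopf algebra arising from $R(\unitobj)$ are related by duality, and that the object of left integrals is the terminal object of $\mathcal{I}_{\ell}(\mathbf{F}_{\mathcal{C}})$. Using the rigidity of $\mathcal{C}$, a left integral $\Lambda \colon K \to \mathbf{F}_{\mathcal{C}}$ is equivalent data to a right cointegral on the dual Hopf algebra $\mathbf{F}_{\mathcal{C}}^*$, i.e.\ to a morphism $\unitobj \to \mathbf{F}_{\mathcal{C}}^* \otimes K^*$ that is ``invariant'' for the coaction. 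The point is then to compute $\mathbf{F}_{\mathcal{C}}^*$: dualizing the coend $\mathbf{F}_{\mathcal{C}} = \int^{X} X^* \otimes X$ gives an end $\mathbf{F}_{\mathcal{C}}^* \cong \int_X X^* \otimes X$, and the universal property of integrals should then single out precisely the component indexed by the projective cover of $\unitobj$. By Lemma~\ref{lem:soc-P-unit} (or its proof via the end-formula Lemma~\ref{lem:end-formula}), that socle is $D$, so $\mathbf{F}_{\mathcal{C}}^*$ has a ``distinguished'' subobject/quotient isomorphic to $D$, and dualizing back gives $\mathrm{Int}(\mathbf{F}_{\mathcal{C}}) \cong D^*$.

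An alternative and perhaps cleaner route uses Theorem~\ref{thm:com-Frob-ZC} directly: the algebra $B = R(\unitobj)$ in $\mathcal{Z}(\mathcal{C})$ satisfies $({}_B B)^* \cong R(D^*)_B$, and $B$ is (under the braiding) closely tied to $\mathbf{F}_{\mathcal{C}}$ — indeed $U(B) = U R(\unitobj)$, and $\mathbf{F}_{\mathcal{C}}$ is the image of the coalgebra structure on $L(\unitobj)$, with $B \cong {}^*(L(\unitobj))$ as algebras by the discussion at the end of \S\ref{subsec:colax-lax-adj}. I would exploit the isomorphism $\Hom_{\mathcal{Z}(\mathcal{C})}(\unitobj, B \otimes B)$-style computations appearing in the proof of Theorem~\ref{thm:IM-inv}, which already relate traces on $B$, the antipode $S$ of $Z$, and the object $D$, to extract the integral. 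In either approach, I expect the key step to be a careful bookkeeping computation: writing down the integral equation~\eqref{eq:left-integral} in terms of the universal dinatural transformation $i_\unitobj(X) \colon X^* \otimes X \to \mathbf{F}_{\mathcal{C}}$ and the multiplication formula $m \circ (i_\unitobj(X) \otimes i_\unitobj(Y)) = i_\unitobj(Y \otimes X) \circ (\id_{X^*} \otimes \sigma_{X, Y^* \otimes Y})$, and then checking that the resulting universal object is represented by $D^*$.

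The main obstacle I anticipate is the correct identification of which duality/side conventions make the answer come out as $D^*$ rather than $D$, ${}^*D$, or $D^{**}$: the coend defining $\mathbf{F}_{\mathcal{C}}$, the terminal (rather than initial) nature of the integral object, and the passage between left and right integrals each introduce a duality, and keeping these consistent with the normalization $\unitobj^* = \unitobj$, $(V\otimes W)^* = W^*\otimes V^*$ fixed in \S\ref{subsec:moncat} will require care. A secondary subtlety is verifying that the ``one-dimensionality'' of the integral space — which in the Hopf-algebra case is Maschke/Larson–Sweedler theory — holds here; this should follow from $\End_{\mathcal{C}}(\unitobj) \cong k$ together with the fact, used repeatedly above, that $D$ is invertible and the socle of the projective cover of $\unitobj$ is simple, but it must be invoked explicitly.
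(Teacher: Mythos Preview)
Your second outlined route---using Theorem~\ref{thm:com-Frob-ZC} together with the algebra identification $U R(\unitobj) \cong {}^*\mathbf{F}_{\mathcal{C}}$ from \S\ref{subsec:colax-lax-adj}---is exactly the paper's proof. The paper sets $A := U R(\unitobj) \cong {}^*\mathbf{F}_{\mathcal{C}}$, cites standard facts on integrals in braided Hopf algebras to reduce $\mathrm{Int}(\mathbf{F}_{\mathcal{C}}) \cong D^*$ to the statement that $A$ has a $D$-valued trace (equivalently $({}_A A)^* \cong D^* \otimes A_A$ as right $A$-modules), and then extracts this from Theorem~\ref{thm:com-Frob-ZC}(3).

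There is, however, one non-trivial step your sketch glosses over. Theorem~\ref{thm:com-Frob-ZC}(3) gives $({}_B B)^* \cong R(D^*)_B$ in $\mathcal{Z}(\mathcal{C})_B$, but applying $U$ to the right-hand side yields $U R(D^*)$, which is not \emph{a priori} $D^* \otimes A$ as a right $A$-module. The paper uses the braiding on $\mathcal{C}$ here: since $\mathcal{C}$ is braided, $D^*$ lifts to some $\widetilde{D} \in \mathcal{Z}(\mathcal{C})$ with $U(\widetilde{D}) = D^*$, and then
\[
R(D^*)_B = K(D^*) = K(\widetilde{D} \ogreaterthan \unitobj) \cong \widetilde{D} \otimes B_B
\]
because $K$ is a $\mathcal{Z}(\mathcal{C})$-module functor; applying $U$ now gives $D^* \otimes A_A$ directly. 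This is the place where the hypothesis that $\mathcal{C}$ is braided actually enters, and you should flag it.

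Your first route, via dualizing the coend to an end and invoking Lemma~\ref{lem:end-formula} or Lemma~\ref{lem:soc-P-unit}, is not what the paper does and is not obviously workable as stated: the passage from ``$\mathbf{F}_{\mathcal{C}}^*$ has a distinguished subobject isomorphic to $D$'' to ``the object of integrals of $\mathbf{F}_{\mathcal{C}}$ is $D^*$'' is exactly the integrals/traces equivalence that the second route invokes from the literature, so this approach does not really avoid it.
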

\begin{proof}
  If we identify $\mathcal{Z}(\mathcal{C})$ with the category of $Z$-modules, then
  \begin{equation}
    L: \mathcal{C} \to \mathcal{Z}(\mathcal{C}),
    \quad V \mapsto (Z(V), \mu_V)
  \end{equation}
  is a left adjoint of $U$. Thus, $\mathbf{F}_{\mathcal{C}} \cong U L(\unitobj)$ as coalgebras. By Lemma~\ref{lem:colax-lax-adj-2},
  \begin{equation*}
    A := U R(\unitobj) \cong U({}^*L(\unitobj)) \cong {}^* U L(\unitobj) \cong {}^* \mathbf{F}_{\mathcal{C}}
  \end{equation*}
  as algebras. We note that ${}^* \mathbf{F}_{\mathcal{C}}$ is also a Hopf algebra in $\mathcal{C}$. Using basic properties of integrals for braided Hopf algebras proved in \cite{MR1759389,MR1862634,MR1685417}, we have the following logical equivalences:
  \begin{align*}
    \mathrm{Int}(\mathbf{F}_{\mathcal{C}}) \cong D^*
    \iff \mathrm{Int}({}^* \mathbf{F}_{\mathcal{C}}) \cong D
    \iff \text{$A$ has a $D$-valued trace}.
  \end{align*}
  Now set $B = R(\unitobj)$ (so that $A = U(B)$) and let $K: \mathcal{C} \to \mathcal{Z}(C)_B$ be the equivalence of $\mathcal{Z}(\mathcal{C})$-module categories given in Theorem~\ref{thm:com-Frob-ZC}. Since $\mathcal{C}$ is braided, there exists an object $\widetilde{D} \in \mathcal{Z}(\mathcal{C})$ such that $U(\widetilde{D}) = D^*$. By Theorem~\ref{thm:com-Frob-ZC}, we have
  \begin{equation*}
    ({}_B B)^*
    \cong K(D^*)
    = K U (\widetilde{D})
    = K(\widetilde{D} \ogreaterthan \unitobj)
    \cong \widetilde{D} \otimes B_B
  \end{equation*}
  as right $B$-modules. Applying $U$, we get an isomorphism $({}_A A)^* \cong D^* \otimes A_A$ of right $A$-modules. Thus $A$ has a $D$-valued trace.
\end{proof}

Suppose that $\mathcal{C}$ is unimodular. Then, by Remark~\ref{rem:Fb-alg-ZC-tr}, there exists a unique (up to scalar multiple) non-zero morphism $\Lambda: \unitobj \to Z(\unitobj)$ of $Z$-modules. The following theorem means that the pair $(\unitobj, \Lambda)$ is a `two-sided' and `universal' integral.

\begin{theorem}
  \label{thm:unimo-FTC-alg-K-elem}
  $(\unitobj, \Lambda)$ is a terminal object of both $\mathcal{I}_{\ell}(\mathbf{F}_{\mathcal{C}})$ and $\mathcal{I}_r(\mathbf{F}_{\mathcal{C}})$.
\end{theorem}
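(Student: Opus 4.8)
The plan is to derive everything from Theorem~\ref{thm:int-F-D} together with one computation that identifies integrals of $\mathbf{F}_{\mathcal{C}}$ with morphisms of $Z$-modules. Since $\mathcal{C}$ is unimodular, $D \cong \unitobj$, so Theorem~\ref{thm:int-F-D} gives $\mathrm{Int}(\mathbf{F}_{\mathcal{C}}) \cong D^* \cong \unitobj$; thus $\mathcal{I}_{\ell}(\mathbf{F}_{\mathcal{C}})$ has a terminal object of the form $(\unitobj, \Lambda_{\ell})$ with $\Lambda_{\ell} \colon \unitobj \to \mathbf{F}_{\mathcal{C}}$. The key point to establish is that $\Lambda$ itself is a left integral of $\mathbf{F}_{\mathcal{C}}$. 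Granting this, $(\unitobj, \Lambda)$ is an object of $\mathcal{I}_{\ell}(\mathbf{F}_{\mathcal{C}})$, so there is a unique $c \colon \unitobj \to \unitobj$ in $\mathcal{C}$ with $\Lambda_{\ell} \circ c = \Lambda$; by~\eqref{eq:simple-unit} the endomorphism $c$ is a scalar, and it is nonzero because $\Lambda \ne 0$ by construction. Hence $c$ is invertible, $(\unitobj, \Lambda) \cong (\unitobj, \Lambda_{\ell})$ in $\mathcal{I}_{\ell}(\mathbf{F}_{\mathcal{C}})$, and therefore $(\unitobj, \Lambda)$ is itself terminal in $\mathcal{I}_{\ell}(\mathbf{F}_{\mathcal{C}})$.

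To show $\Lambda$ is a left integral I would compute directly with the coend $\mathbf{F}_{\mathcal{C}} = Z(\unitobj) = \int^{X} X^* \otimes X$. By definition $\Lambda$ is a morphism of $Z$-modules, that is, $\mu_{\unitobj} \circ Z(\Lambda) = \Lambda \circ Z_0$; I would unwind this using the explicit formula~\eqref{eq:Hopf-monad-Z-mult} for $\mu$, the identity $\varepsilon_{\mathbf{F}_{\mathcal{C}}} = Z_0$, and the definition of the multiplication $m$ of $\mathbf{F}_{\mathcal{C}}$, namely $m \circ (i_{\unitobj}(X) \otimes i_{\unitobj}(Y)) = i_{\unitobj}(Y \otimes X) \circ (\id_{X^*} \otimes \sigma_{X, Y^* \otimes Y})$. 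Precomposing the left integral equation~\eqref{eq:left-integral} with $i_{\unitobj}(X) \otimes \id_{\unitobj}$ and invoking dinaturality of $i$ (equivalently, naturality of the transformation $\partial$ of Remark~\ref{rem:Hopf-monad-Z-antipode}) should reduce it precisely to $\mu_{\unitobj} \circ Z(\Lambda) = \Lambda \circ Z_0$. This diagram chase --- keeping the half-braiding $\sigma$ entering $m$ in step with the bimonad structure maps $Z_2$ and $\mu$ --- is the main obstacle; I expect it to be routine but lengthy, and best displayed with string diagrams. As a sanity check and possible shortcut, one can instead use $\mathbf{F}_{\mathcal{C}} = U L(\unitobj)$ and the identification $U R(\unitobj) \cong {}^* \mathbf{F}_{\mathcal{C}}$ of Hopf algebras (from the proof of Theorem~\ref{thm:int-F-D}) to read off the integral condition from the Frobenius structure of the commutative algebra $B = R(\unitobj)$ in $\mathcal{Z}(\mathcal{C})$ supplied by Theorem~\ref{thm:com-Frob-ZC}.

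For $\mathcal{I}_r(\mathbf{F}_{\mathcal{C}})$ I would repeat the whole argument for the reversed category $\mathcal{C}^{\rev}$, which is again a braided finite tensor category subject to~\eqref{eq:C-env-assume} and~\eqref{eq:simple-unit} and is unimodular because its distinguished invertible object is a dual of $D$. Under the canonical identification of $\mathbf{F}_{\mathcal{C}^{\rev}}$ with $\mathbf{F}_{\mathcal{C}}$ equipped with the opposite multiplication (via Remark~\ref{rem:FTC-coend-remark}), a left integral of $\mathbf{F}_{\mathcal{C}^{\rev}}$ is a right integral of $\mathbf{F}_{\mathcal{C}}$, while $\Lambda$ is unchanged --- it is the nonzero element of $\Hom_{\mathcal{Z}(\mathcal{C})}(\unitobj, L(\unitobj))$, a space insensitive to reversing the tensor product of $\mathcal{C}$. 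Hence the left-integral statement applied to $\mathcal{C}^{\rev}$ says exactly that $(\unitobj, \Lambda)$ is terminal in $\mathcal{I}_r(\mathbf{F}_{\mathcal{C}})$. (Alternatively one can transport the terminal object of $\mathcal{I}_{\ell}(\mathbf{F}_{\mathcal{C}})$ along the equivalence $\mathcal{I}_{\ell}(\mathbf{F}_{\mathcal{C}}) \simeq \mathcal{I}_r(\mathbf{F}_{\mathcal{C}})$ induced by the invertible antipode of $\mathbf{F}_{\mathcal{C}}$, using the cited results of \cite{MR1759389,MR1685417}, and then match it with $\Lambda$ via the $1$-dimensionality of the space of right integrals.) Combining the two cases yields the theorem.
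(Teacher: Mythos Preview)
Your plan is essentially the paper's own proof: the core computation---precomposing $m\circ(\id_{\mathbf{F}_{\mathcal{C}}}\otimes\Lambda)$ with $i_{\unitobj}(X)$ and using the $Z$-linearity $\mu_{\unitobj}\circ Z(\Lambda)=\Lambda\circ Z_0$ together with the formula for $m$ in terms of $i_{\unitobj}$ and $\sigma$---is exactly what the paper does, and your Schur-lemma endgame is identical. The one methodological difference is the right-integral half: the paper does not pass to $\mathcal{C}^{\rev}$ or invoke the antipode, but instead rewrites the multiplication as
\[
  m\circ(i_{\unitobj}(X)\otimes i_{\unitobj}(Y))
  = i_{\unitobj}(X\otimes Y)\circ(\sigma_{X^*\otimes X,\,Y^*}\otimes\id_Y)
\]
via dinaturality of $i_{\unitobj}$ and the identity $\sigma_{X,Y}^*=\sigma_{X^*,Y^*}$, and then repeats the left-integral computation verbatim with the roles of the two tensor factors swapped. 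This is slightly cleaner than your $\mathcal{C}^{\rev}$ route, which would oblige you to track how the braiding, the duals, and the coend identification all transport under reversal before you can assert that $\mathbf{F}_{\mathcal{C}^{\rev}}$ is $\mathbf{F}_{\mathcal{C}}$ with the opposite multiplication and that $\Lambda$ is literally unchanged; your antipode alternative is safe but still needs the extra input $S_{\mathbf{F}}\circ\Lambda=\Lambda$ (which the paper records only afterwards, in Remark~\ref{rem:HKR-inv}).
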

\begin{proof}
  We first show that $(\unitobj, \Lambda)$ is a left integral in $H$. By~\eqref{eq:Hopf-monad-Z-mult}, we have
  \begin{align*}
    m \circ (i_{\unitobj}(X) \otimes i_{\unitobj}(Y))
    & = i_{\unitobj}(Y \otimes X) \circ (\id_{X^*} \otimes \sigma_{X, Y^* \otimes Y}) \\
    & = \mu_{\unitobj} \circ i_{Z(\unitobj)}(X) \circ (\id_{X^*} \otimes i_{\unitobj}(Y) \otimes \id_X) \circ (\id_{X^*} \otimes \sigma_{X, Y^* \otimes Y}) \\
    & = \mu_{\unitobj} \circ i_{Z(\unitobj)}(X) \circ (\id_{X^*} \otimes \sigma_{X, Z(\unitobj)}) \circ (\id_{X^*} \otimes \id_X \otimes i_{\unitobj}(Y))
  \end{align*}
  for all $X, Y \in \mathcal{C}$. By the Fubini theorem for coends, $\{ \id_{X^*} \otimes \id_X \otimes i_{\unitobj}(Y) \}_{Y \in \mathcal{C}}$ is a coend of the functor $(Y_1, Y_2) \mapsto X^* \otimes X \otimes Y_1^* \otimes Y_2$. Thus,
  \begin{equation}
    \label{eq:unimo-FTC-alg-K-elem-1}
    m \circ (i_{\unitobj}(X) \otimes \id_{Z(\unitobj)})
    = \mu_{\unitobj} \circ i_{Z(\unitobj)}(X) \circ (\id_{X^*} \otimes \sigma_{X, Z(\unitobj)})
  \end{equation}
  for all $X \in \mathcal{C}$. By using this formula, we compute
  \begin{align*}
    m \circ (\id_F \otimes \Lambda) \circ i_{\unitobj}(X)
    & = \mu_{\unitobj} \circ i_{Z(\unitobj)}(X) \circ (\id_{X^*} \otimes \sigma_{X,Z(\unitobj)})
    \circ (\id_{X^*} \otimes \id_X \otimes \Lambda) \\
    & = \mu_{\unitobj} \circ i_{Z(\unitobj)}(X) \circ (\id_{X^*} \otimes \Lambda \otimes \id_X)
    \circ (\id_{X^*} \otimes \sigma_{X,\unitobj}) \\
    & = \mu_{\unitobj} \circ Z(\Lambda) \circ i_{\unitobj}(X) \\
    & = \Lambda \circ Z_0 \circ i_{\unitobj}(X).
  \end{align*}
  Here, the last equality follows from the assumption that $\Lambda: \unitobj \to Z(\unitobj)$ is a morphism of $Z$-modules. Recall that the counit of $\mathbf{F}_{\mathcal{C}}$ is given by $\varepsilon = Z_0$. Hence,
  \begin{equation*}
    m \circ (\id_{\mathbf{F}_{\mathcal{C}}} \otimes \Lambda)
    = \Lambda \circ Z_0
    = \Lambda \circ \varepsilon
    = (\Lambda \otimes \id_{\unitobj}) \circ (\id_{\unitobj} \otimes \varepsilon)
    = \Lambda \otimes \varepsilon,
  \end{equation*}
  {\it i.e.}, the pair $(\unitobj, \Lambda)$ is a left integral in $\mathbf{F}_{\mathcal{C}}$. To show that it is also a right integral in $H$, we remark the following description of the multiplication:
  \begin{align*}
    m \circ (i_{\unitobj}(X) \otimes i_{\unitobj}(Y))
    & = i_{\unitobj}(Y \otimes X) \circ (\id_{X^*} \otimes \sigma_{X, Y^* \otimes Y}) \\
    & = i_{\unitobj}(Y \otimes X) \circ (\id_{X^* \otimes Y^*} \otimes \sigma_{X, Y})
    \circ (\id_{X^*} \otimes \sigma_{X, Y^*} \otimes \id_{Y}) \\
    & = i_{\unitobj}(X \otimes Y) \circ (\sigma_{X^*,Y^*} \otimes \id_{X \otimes Y})
    \circ (\id_{X^*} \otimes \sigma_{X, Y^*} \otimes \id_{Y}) \\
    & = i_{\unitobj}(X \otimes Y) \circ (\sigma_{X^* \otimes X, Y^*} \otimes \id_Y).
  \end{align*}
  Here, the third equality follows from the dinaturality of $i_{\unitobj}$ and the well-known formula $\sigma_{X,Y}^* = \sigma_{X^*,Y^*}$. In a similar way as~\eqref{eq:unimo-FTC-alg-K-elem-1}, we obtain
  \begin{equation}
    \label{eq:unimo-FTC-alg-K-elem-2}
    m \circ (\id_{Z(\unitobj)} \otimes i_{\unitobj}(Y))
    = \mu_{\unitobj} \circ i_{Z(\unitobj)}(Y) \circ (\sigma_{Z(\unitobj), Y^*} \otimes \id_Y)
  \end{equation}
  for all $Y \in \mathcal{C}$. One can show that $(\unitobj, \Lambda)$ is a right integral in a similar way as above but by using \eqref{eq:unimo-FTC-alg-K-elem-2} instead of~\eqref{eq:unimo-FTC-alg-K-elem-1}.

  Now we prove that $(\unitobj, \Lambda)$ is terminal both in $\mathcal{I}_{\ell}(\mathbf{F}_{\mathcal{C}})$. By definition, there exists a morphism $f: \unitobj \to \mathrm{Int}(\mathbf{F}_{\mathcal{C}})$ such that $\Lambda_{\ell} \circ f = \Lambda$. Obviously, $f \ne 0$. By Schur's lemma and our assumption~\eqref{eq:simple-unit}, $f$ is an isomorphism. Thus,
  \begin{equation*}
    (\unitobj, \Lambda) \cong (\mathrm{Int}(\mathbf{F}_{\mathcal{C}}), \Lambda_{\ell})
  \end{equation*}
  in the category of objects over $H$, {\it i.e.}, $(\unitobj, \Lambda)$ is a terminal object in $\mathcal{I}_{\ell}(\mathbf{F}_{\mathcal{C}})$. One can show that $(\unitobj, \Lambda)$ is a terminal object of $\mathcal{I}_r(\mathbf{F}_{\mathcal{C}})$ in a similar way.
\end{proof}

\begin{remark}
  \label{rem:HKR-inv}
  Suppose that $\mathcal{C}$ is a unimodular ribbon finite tensor category. By \cite[Proposition 4.10]{MR1759389}, the morphism $\Lambda: \unitobj \to \mathbf{F}_{\mathcal{C}}$ in Theorem~\ref{thm:unimo-FTC-alg-K-elem} satisfies $S_{\mathbf{F}} \circ \Lambda = \Lambda$, where $S_{\mathbf{F}}$ is the antipode of $\mathbf{F} := \mathbf{F}_{\mathcal{C}}$. Moreover, we have
  \begin{align*}
    (\id_{\mathbf{F}} \otimes m) \circ (\Delta \otimes \id_{\mathbf{F}}) \circ (\Lambda \otimes \Lambda)
    & = (\id_{\mathbf{F}} \otimes m) \circ (\id_{\mathbf{F}} \otimes \id_{\mathbf{F}} \otimes \Lambda) \circ \Delta \circ \Lambda \\
    & = (\id \otimes \varepsilon \otimes \Lambda) \circ \Delta \circ \Lambda \\
    & = \Lambda \otimes \Lambda.
  \end{align*}
  Hence $\Lambda$ is an {\em algebraic Kirby element} in the sense of Virelizier \cite[Definition~2.7]{MR2251160}. If $\Lambda$ is normalizable in his sense, then it gives rise to an invariant $\tau_{\mathcal{C}}(-;  \Lambda)$ of closed 3-manifolds. The invariant $\tau_{\mathcal{C}}(-; \Lambda)$ may be called the Hennings-Kauffman-Radford (HKR) invariant arising from $\mathcal{C}$, since the original HKR invariant \cite{MR1413901,MR1321293} constructed from a finite-dimensional unimodular ribbon Hopf algebra $H$ is the case where $\mathcal{C} = H\mbox{-{\sf mod}}$.
\end{remark}


\begin{thebibliography}{10}

\bibitem{MR2724388}
M.~Aguiar and S.~Mahajan.
\newblock {\em Monoidal functors, species and {H}opf algebras}, volume~29 of
  {\em CRM Monograph Series}.
\newblock American Mathematical Society, Providence, RI, 2010.
\newblock With forewords by Kenneth Brown and Stephen Chase and Andr{\'e}
  Joyal.

\bibitem{MR1797619}
B.~Bakalov and A.~Kirillov, Jr.
\newblock {\em Lectures on tensor categories and modular functors}, volume~21
  of {\em University Lecture Series}.
\newblock American Mathematical Society, Providence, RI, 2001.

\bibitem{MR1759389}
Y.~Bespalov, T.~Kerler, V.~Lyubashenko, and V.~Turaev.
\newblock Integrals for braided {H}opf algebras.
\newblock {\em J. Pure Appl. Algebra}, 148(2):113--164, 2000.

\bibitem{MR2793022}
A.~Brugui{\`e}res, S.~Lack, and A.~Virelizier.
\newblock Hopf monads on monoidal categories.
\newblock {\em Adv. Math.}, 227(2):745--800, 2011.

\bibitem{MR2355605}
A.~Brugui{\`e}res and A.~Virelizier.
\newblock Hopf monads.
\newblock {\em Adv. Math.}, 215(2):679--733, 2007.

\bibitem{MR2869176}
A.~Brugui{\`e}res and A.~Virelizier.
\newblock Quantum double of {H}opf monads and categorical centers.
\newblock {\em Trans. Amer. Math. Soc.}, 364(3):1225--1279, 2012.

\bibitem{MR1926102}
S.~Caenepeel, G.~Militaru, and S.~Zhu.
\newblock {\em Frobenius and separable functors for generalized module
  categories and nonlinear equations}, volume 1787 of {\em Lecture Notes in
  Mathematics}.
\newblock Springer-Verlag, Berlin, 2002.

\bibitem{MR3039775}
A.~Davydov, M.~M{\"u}ger, D.~Nikshych, and V.~Ostrik.
\newblock The {W}itt group of non-degenerate braided fusion categories.
\newblock {\em J. Reine Angew. Math.}, 677:135--177, 2013.

\bibitem{DayPhDThesis}
B.~Day.
\newblock {\em Construction of biclosed categories}.
\newblock PhD thesis, University of New South Wales, 1970.

\bibitem{MR2342829}
B.~Day and R.~Street.
\newblock Centres of monoidal categories of functors.
\newblock In {\em Categories in algebra, geometry and mathematical physics},
  volume 431 of {\em Contemp. Math.}, pages 187--202. Amer. Math. Soc.,
  Providence, RI, 2007.

\bibitem{MR1106898}
P.~Deligne.
\newblock Cat\'egories tannakiennes.
\newblock In {\em The {G}rothendieck {F}estschrift, {V}ol.\ {II}}, volume~87 of
  {\em Progr. Math.}, pages 111--195. Birkh\"auser Boston, Boston, MA, 1990.

\bibitem{2013arXiv1312.7188D}
C.~L. {Douglas}, C.~{Schommer-Pries}, and N.~{Snyder}.
\newblock {Dualizable tensor categories}.
\newblock {\tt arXiv:1312.7188}, 2013.

\bibitem{2014arXiv1406.4204D}
C.~L. {Douglas}, C.~{Schommer-Pries}, and N.~{Snyder}.
\newblock {The balanced tensor product of module categories}.
\newblock {\tt arXiv:1406.4204}, 2014.

\bibitem{MR0125148}
S.~Eilenberg.
\newblock Abstract description of some basic functors.
\newblock {\em J. Indian Math. Soc. (N.S.)}, 24:231--234 (1961), 1960.

\bibitem{MR0184984}
S.~Eilenberg and J.~C. Moore.
\newblock Adjoint functors and triples.
\newblock {\em Illinois J. Math.}, 9:381--398, 1965.

\bibitem{EGNO-Lect}
  P.~Etingof, S.~Gelaki, D.~Nikshych, and V.~Ostrik. Tensor categories. Lecture notes for MIT 18.769, 2009. \url{http://www-math.mit.edu/~etingof/tenscat1.pdf}.

\bibitem{MR2097289}
P.~Etingof, D.~Nikshych, and V.~Ostrik.
\newblock An analogue of {R}adford's {$S^4$} formula for finite tensor
  categories.
\newblock {\em Int. Math. Res. Not.}, (54):2915--2933, 2004.

\bibitem{MR2183279}
P.~Etingof, D.~Nikshych, and V.~Ostrik.
\newblock On fusion categories.
\newblock {\em Ann. of Math. (2)}, 162(2):581--642, 2005.

\bibitem{MR2119143}
P.~Etingof and V.~Ostrik.
\newblock Finite tensor categories.
\newblock {\em Mosc. Math. J.}, 4(3):627--654, 782--783, 2004.

\bibitem{LopezFranco2013207}
I.~L. Franco.
\newblock Tensor products of finitely cocomplete and abelian categories.
\newblock {\em Journal of Algebra}, 396(0):207 -- 219, 2013.

\bibitem{MR1413901}
M.~Hennings.
\newblock Invariants of links and {$3$}-manifolds obtained from {H}opf
  algebras.
\newblock {\em J. London Math. Soc. (2)}, 54(3):594--624, 1996.

\bibitem{MR2943680}
K.~Ishihara and A.~Ishii.
\newblock An operator invariant for handlebody-knots.
\newblock {\em Fund. Math.}, 217(3):233--247, 2012.

\bibitem{MR3265394}
A.~Ishii and A.~Masuoka.
\newblock Handlebody-knot invariants derived from unimodular {H}opf algebras.
\newblock {\em J. Knot Theory Ramifications}, 23(7):1460001, 24, 2014.

\bibitem{MR1321145}
C.~Kassel.
\newblock {\em Quantum groups}, volume 155 of {\em Graduate Texts in
  Mathematics}.
\newblock Springer-Verlag, New York, 1995.

\bibitem{MR1321293}
L.~H. Kauffman and D.~E. Radford.
\newblock Invariants of {$3$}-manifolds derived from finite-dimensional {H}opf
  algebras.
\newblock {\em J. Knot Theory Ramifications}, 4(1):131--162, 1995.

\bibitem{MR0360749}
G.~M. Kelly.
\newblock Doctrinal adjunction.
\newblock In {\em Category {S}eminar ({P}roc. {S}em., {S}ydney, 1972/1973)},
  pages 257--280. Lecture Notes in Math., Vol. 420. Springer, Berlin, 1974.

\bibitem{MR1862634}
T.~Kerler and V.~V. Lyubashenko.
\newblock {\em Non-semisimple topological quantum field theories for
  3-manifolds with corners}, volume 1765 of {\em Lecture Notes in Mathematics}.
\newblock Springer-Verlag, Berlin, 2001.

\bibitem{MR1712872}
S.~Mac~Lane.
\newblock {\em Categories for the working mathematician}, volume~5 of {\em
  Graduate Texts in Mathematics}.
\newblock Springer-Verlag, New York, second edition, 1998.

\bibitem{MR1243637}
S.~Montgomery.
\newblock {\em Hopf algebras and their actions on rings}, volume~82 of {\em
  CBMS Regional Conference Series in Mathematics}.
\newblock Published for the Conference Board of the Mathematical Sciences,
  Washington, DC, 1993.

\bibitem{MR2381536}
S.-H. Ng and P.~Schauenburg.
\newblock Higher {F}robenius-{S}chur indicators for pivotal categories.
\newblock In {\em Hopf algebras and generalizations}, volume 441 of {\em
  Contemp. Math.}, pages 63--90. Amer. Math. Soc., Providence, RI, 2007.

\bibitem{MR1976459}
V.~Ostrik.
\newblock Module categories, weak {H}opf algebras and modular invariants.
\newblock {\em Transform. Groups}, 8(2):177--206, 2003.

\bibitem{MR0407069}
D.~E. Radford.
\newblock The order of the antipode of a finite dimensional {H}opf algebra is
  finite.
\newblock {\em Amer. J. Math.}, 98(2):333--355, 1976.

\bibitem{2013arXiv1309.4539S}
K.~{Shimizu}.
\newblock {The pivotal cover and Frobenius-Schur indicators}.
\newblock To appear in Journal of Algebra.

\bibitem{2014arXiv1412.0211S}
K.~{Shimizu}.
\newblock {The relative modular object and Frobenius extensions of finite Hopf algebras}.
\newblock {\tt arXiv:1412.0211}, 2014.

\bibitem{MR1685417}
M.~Takeuchi.
\newblock Finite {H}opf algebras in braided tensor categories.
\newblock {\em J. Pure Appl. Algebra}, 138(1):59--82, 1999.

\bibitem{MR2251160}
A.~Virelizier.
\newblock Kirby elements and quantum invariants.
\newblock {\em Proc. London Math. Soc. (3)}, 93(2):474--514, 2006.

\bibitem{MR0118757}
C.~E. Watts.
\newblock Intrinsic characterizations of some additive functors.
\newblock {\em Proc. Amer. Math. Soc.}, 11:5--8, 1960.

\end{thebibliography}
\def\cprime{$'$}

\end{document}